\documentclass[11pt,reqno]{amsart}

\usepackage[a4paper,margin=1.25in]{geometry}
\usepackage[T1]{fontenc}
\usepackage[utf8]{inputenc}
\usepackage{lmodern}
\usepackage{microtype}
\usepackage{amsmath,amssymb,amsthm,mathtools}

\usepackage[inline]{enumitem}
\setlist[enumerate,1]{label={\upshape (\roman*)}}
\usepackage{aliascnt}
\usepackage[colorlinks=true,linkcolor=blue,citecolor=blue,urlcolor=blue]{hyperref}
\usepackage[nameinlink,capitalize]{cleveref}
\usepackage{lineno}

\numberwithin{equation}{section}
\allowdisplaybreaks

\theoremstyle{plain}
\newtheorem{theorem}{Theorem}[section]

\newaliascnt{lemma}{theorem}
\newtheorem{lemma}[lemma]{Lemma}
\aliascntresetthe{lemma}

\newaliascnt{conjecture}{theorem}

\aliascntresetthe{conjecture}

\newaliascnt{corollary}{theorem}
\newtheorem{corollary}[corollary]{Corollary}
\aliascntresetthe{corollary}

\newaliascnt{construction}{theorem}

\aliascntresetthe{construction}

\theoremstyle{definition}
\newaliascnt{problem}{theorem}
\newtheorem{problem}[problem]{Problem}
\aliascntresetthe{problem}

\newaliascnt{definition}{theorem}
\newtheorem{definition}[definition]{Definition}
\aliascntresetthe{definition}

\newaliascnt{remark}{theorem}

\aliascntresetthe{remark}

\crefname{theorem}{Theorem}{Theorems}
\crefname{lemma}{Lemma}{Lemmas}
\crefname{problem}{Problem}{Problems}
\crefname{conjecture}{Conjecture}{Conjectures}
\crefname{construction}{Construction}{Constructions}
\crefname{remark}{Remark}{Remarks}
\crefname{corollary}{Corollary}{Corollaries}

\newcommand{\eps}{\varepsilon}
\newcommand{\N}{\mathbb N}

\newcommand{\cP}{\mathcal P}
\newcommand{\cF}{\mathcal F}
\newcommand{\one}{\mathbf 1}
\newcommand{\dd}{\mathop{}\!\mathrm d}

\newcommand{\E}{\mathbb E}
\newcommand{\Prob}{\mathbb P}
\DeclareMathOperator{\Bin}{Bin}

\title[Tight Hamilton Cycles in Uniformly Dense $3$-Graphs]{Sharp Diagonal Thresholds for Tight Hamilton Cycles in Uniformly Dense $3$-Graphs}

\author[H. Lin]{Hao Lin}
\address{School of Mathematical Sciences, University of Science and Technology of China, Hefei, Anhui 230026, China}
\email{haolinz6@ustc.edu.cn}

\author[G. Wang]{Guanghui Wang}
\address{School of Mathematics, Shandong University, Jinan 250100, China}
\email{ghwang@sdu.edu.cn}

\author[W. Zhou]{Wenling Zhou}
\address{School of Mathematics, Shandong University, Jinan 250100, China}
\email{gracezhou@sdu.edu.cn}

\keywords{tight Hamilton cycles, uniformly dense hypergraphs,  minimum vertex degree, minimum codegree}

\date{}
\begin{document}
%\pagewiselinenumbers

\begin{abstract}
A $3$-uniform hypergraph (or $3$-graph) $H$ on $n$ vertices is
\emph{$(n,d,\mu)$-dense} if
$e_H(X,Y,Z)\ge d|X||Y||Z|-\mu n^3$
for all $X,Y,Z\subseteq V(H)$.  This is one of the weakest standard notions
of quasirandomness for $3$-graphs and is also known as linear quasirandomness.

In this paper, we determine the sharp diagonal thresholds for tight Hamilton cycles in $(n,d,\mu)$-dense $3$-graphs $H$ under conditions on the
minimum vertex degree $\delta_1(H)$ and the minimum codegree $\delta_2(H)$.
We actually prove a general result: define
\[
 f(d):=\frac{1-\sqrt{(4d-1)/3}}2.
\]
We prove that $(n,d,\mu)$-density together with
$\delta_1(H)\ge\alpha\binom{n-1}{2}$ forces a tight Hamilton cycle whenever $d > 1/3$ and
$\alpha>f(d)$. In particular, $f(1/3)=1/3$, which answers
Problem~8.3(i) of Ara\'ujo, Piga and Schacht and confirms Conjecture~8.1 of
Han, Shu and Wang. For the minimum codegree condition, the sharp diagonal threshold is
$(\kappa,\kappa)$, where $\kappa$ is the unique real solution of $\kappa=(1-\kappa)^3$.  Since
$\kappa\approx0.3177>1/4$, this gives a negative answer to Problem~8.3(ii) of
Ara\'ujo, Piga and Schacht and disproves Conjecture~8.2 of Han, Shu and Wang. 
The two proofs use a common Hamilton-framework reduction, but the two degree conditions lead to distinct dominant-component lemmas for $(n, d, \mu)$-dense $3$-graphs, which are of independent interest and whose proofs do not rely on the absorption method.
\end{abstract}

\maketitle

\section{Introduction}\label{sec:introduction}
Hamiltonicity is one of the most fundamental and extensively studied
topics in extremal graph theory.
Since Karp
\cite{Karp1972} proved that deciding whether a graph contains a Hamilton
cycle is NP-complete, it is natural to seek sufficient (and necessary) conditions guaranteeing such cycles. Two prominent classes of
conditions have been intensively investigated: local minimum degree conditions and global quasirandomness conditions.
For graphs, Dirac's theorem
\cite{Dirac1952} states that every graph on $n\ge3$ vertices with minimum
degree at least $n/2$ contains a Hamilton cycle, and the sharpness of the bound is witnessed by the construction consisting of
two disjoint cliques of orders as equal as possible. The blow-up lemma of Koml\'os, S\'ark\"ozy and Szemer\'edi
\cite{KomlosSarkozySzemeredi1997} implies that dense quasirandom graphs with
a mild minimum degree condition contain a broad class of
bounded-degree spanning graphs.

The corresponding problem for hypergraphs is substantially more delicate. Given $k\ge 3$, a \emph{$k$-uniform hypergraph}, or \emph{$k$-graph} for short, is a
hypergraph whose edges all have size $k$. 
Unlike graphs, $k$-graphs admit several inequivalent degree parameters, several natural notions of
cycles, and a genuine hierarchy of quasirandomness properties.
There is now an extensive literature on Dirac-type minimum
$s$-degree conditions for Hamilton cycles in hypergraphs;
see, for example, \cite{KuhnOsthus2006,RodlRucinskiSzemeredi2006,
RodlRucinskiSzemeredi2008,HanSchacht2010,KuhnMycroftOsthus2010,
KeevashKuhnMycroftOsthus2011,RodlRucinskiSzemeredi2011,
BussHanSchacht2013,RodlRucinski2014,HanZhao2015Codegree,
HanZhao2015Vertex,DeOliveiraBastosEtAl2017,
RodlRucinskiSchachtSzemeredi2017,DeOliveiraBastosEtAl2018,
ReiherEtAl2019}. We refer to
\cite{KO-survey-2014,RR-survey-2010,Zhao-survey-2016}
for surveys of this development.
A second line of
research asks which quasirandomness assumptions, supplemented by suitable
local degree conditions, force spanning cycles.  The study of Hamilton cycles under quasirandomness
conditions was initiated by Lenz, Mubayi and Mycroft
\cite{LenzMubayiMycroft2016}.

\subsection{Dirac-type conditions}
Throughout the paper, we focus on $3$-graphs. For
$\ell\in\{1,2\}$, a $3$-graph is an \emph{$\ell$-cycle} if its vertices
admit a cyclic ordering in which each edge consists of three consecutive
vertices and consecutive edges in their natural cyclic order intersect in
exactly $\ell$ vertices.  The cases $\ell=1$ and $\ell=2$ are called a
\emph{loose cycle} and a \emph{tight cycle}, respectively.  A loose or tight
cycle that spans the host $3$-graph is a \emph{loose Hamilton cycle} or a
\emph{tight Hamilton cycle}.  This notion was introduced by Katona and
Kierstead \cite{KatonaKierstead1999}.  We abbreviate the triple
$\{x,y,z\}$ to $xyz$ and the pair $\{u,v\}$ to $uv$.

For a $3$-graph $H$ and distinct vertices $u,v\in V(H)$, define
\[
 \begin{aligned}
 N_H(v)&:=\bigl\{xy\in\tbinom{V(H)\setminus\{v\}}2:vxy\in E(H)\bigr\},
 &\deg_H(v)&:=|N_H(v)|,\\
 N_H(uv)&:=\bigl\{w\in V(H)\setminus\{u,v\}:uvw\in E(H)\bigr\},
 &\deg_H(uv)&:=|N_H(uv)|.
 \end{aligned}
\]
For $s\in\{1,2\}$, the \emph{minimum $s$-degree} of $H$ is
$\delta_s(H):=\min\{\deg_H(S):S\in\binom{V(H)}s\}$.  Thus $\delta_1(H)$ is the
\emph{minimum vertex degree} and $\delta_2(H)$ is the \emph{minimum
codegree}.

The Dirac-type theory of hypergraph Hamilton cycles is now well developed.
For loose Hamilton cycles, Bu\ss, H\`an and Schacht
\cite{BussHanSchacht2013} established the asymptotically optimal minimum
vertex degree threshold $7/16$, and Han and Zhao \cite{HanZhao2015Vertex}
subsequently determined the exact threshold for sufficiently large $n$.  For
tight Hamilton cycles, R\"odl, Ruci\'nski and Szemer\'edi
\cite{RodlRucinskiSzemeredi2006,RodlRucinskiSzemeredi2008,
RodlRucinskiSzemeredi2011} established the sharp asymptotic minimum-codegree threshold $1/2$.
At the vertex degree setting, Reiher, R\"odl, Ruci\'nski, Schacht and
Szemer\'edi \cite{ReiherEtAl2019} proved that the asymptotic threshold for a
tight Hamilton cycle is $(5/9+o(1))\binom{n-1}{2}$. The extremal
constructions behind these results have pronounced global structure.  This
suggests a complementary question: how far can the local degree assumptions
be reduced when the host hypergraph already has a uniform edge distribution?

\subsection{Hamiltonicity in uniformly dense hypergraphs}
The theory of quasirandom graphs was developed in the foundational work
of Chung, Graham and Wilson \cite{ChungGrahamWilson1989}. In graphs, many natural quasirandom properties
are asymptotically equivalent.  For hypergraphs they are not; see
\cite{ChungGraham1990,LenzMubayi2015Poset,AignerHorevEtAl2018}.  We work with
one of the weakest standard notions of quasirandomness.

For a $3$-graph $H$ and not necessarily disjoint sets
$X,Y,Z\subseteq V(H)$, let $e_H(X,Y,Z)$ denote the number of ordered triples
$(x,y,z)\in X\times Y\times Z$ for which $xyz\in E(H)$.  Given $d,\mu>0$,
an $n$-vertex $3$-graph $H$ is \emph{$(d,\mu)$-dense} if
\begin{equation}\label{eq:def-density-intro}
 e_H(X,Y,Z)\ge d|X||Y||Z|-\mu n^3
\end{equation}
for all $X,Y,Z\subseteq V(H)$.  We call $H$ \emph{$(n,d,\mu)$-dense} if it
has $n$ vertices and is $(d,\mu)$-dense.  Hypergraphs satisfying this
one-sided box-discrepancy condition are often called \emph{uniformly dense}
or \emph{linearly quasirandom}.  The corresponding two-sided condition
controls copies of fixed linear hypergraphs, namely hypergraphs in which any
two edges meet in at most one vertex; see
\cite{ConlonEtAl2012,KohayakawaEtAl2010}.

Spanning problems under quasirandomness have been studied from several
directions.  Lenz and Mubayi \cite{LenzMubayi2016} proved perfect-packing
results under linear quasirandomness and a mild minimum vertex degree
condition; related $F$-factor problems were developed further by Ding, Han,
Sun, Wang and Zhou \cite{Ding2022,DingEtAl2023}.  Lenz, Mubayi and Mycroft
\cite{LenzMubayiMycroft2016} showed that positive linear quasirandomness,
together with positive minimum vertex degree, forces a loose Hamilton cycle.
For tight Hamilton cycles, the first positive results required stronger
notions of quasirandomness.  Aigner-Horev and Levy
\cite{AignerHorevLevy2021} treated cherry-quasirandom $3$-graphs under a
minimum codegree condition, and Gan and Han \cite{GanHan2022} obtained the
corresponding minimum vertex degree result.  For the intermediate
vertex--pair density notion, Ara\'ujo, Piga and Schacht
\cite{AraujoPigaSchacht2022} proved that density strictly larger than $1/4$,
together with positive minimum vertex degree, forces a tight Hamilton cycle,
and that $1/4$ is optimal. For non-linear Hamilton cycles in uniformly dense hypergraphs, Han, Shu
and Wang \cite{HanShuWang2021,HanShuWang2026} identified the problem of
connecting prescribed ordered ends as the main obstacle and developed a
general absorbing framework around it.

Against this background, Ara\'ujo, Piga and Schacht \cite[Problem~8.3]{AraujoPigaSchacht2022} asked
whether the following two diagonal conditions force a tight Hamilton cycle
under \eqref{eq:def-density-intro}:
\begin{enumerate}[label=\textup{(\roman*)},leftmargin=2.4em]
 \item the density and the normalized minimum vertex degree are both strictly
       larger than $1/3$;
 \item the density and the normalized minimum codegree are both strictly
       larger than $1/4$.
\end{enumerate}
The same questions appeared as Conjectures~8.1 and~8.2 in the conference
version of Han, Shu and Wang \cite{HanShuWang2021}; see also the discussion
in the subsequent journal version \cite{HanShuWang2026}. Our main results
settle both questions and, in the minimum-vertex-degree setting, determine
the full sharp boundary above density $1/3$.

\subsection{Main results}
For $d\in[1/4,1]$, define
\begin{equation}\label{eq:vertex-boundary}
 f(d):=\frac{1-\sqrt{(4d-1)/3}}2,
 \qquad
 \rho(d):=1-f(d)=\frac{1+\sqrt{(4d-1)/3}}2.
\end{equation}
Then
\begin{equation}\label{eq:rho-identity}
 d=\rho(d)^3+(1-\rho(d))^3=f(d)^3+(1-f(d))^3.
\end{equation}
Our first theorem determines the sharp minimum-vertex-degree boundary
throughout the range $d>1/3$.

\begin{theorem}\label{thm:main-vexdeg-curve}
For every $d\in(1/3,1]$ and every $\alpha>f(d)$, there exist $\mu>0$ and
$n_0\in\N$ such that every $(n,d,\mu)$-dense $3$-graph $H$ with
$n\ge n_0$ and $\delta_1(H)\ge\alpha\binom{n-1}{2}$
contains a tight Hamilton cycle.
\end{theorem}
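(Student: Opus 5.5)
We would follow the Hamilton-framework reduction of Han, Shu and Wang, in the form used by Ara\'ujo, Piga and Schacht. Here, a tight Hamilton cycle in a uniformly dense $3$-graph is obtained once three properties hold: \emph{connectivity} (tight walks join prescribed ordered pairs), \emph{space} (a fractional or almost perfect tight-path cover), and \emph{aperiodicity}. Concretely, we work in a reduced $3$-graph $\mathcal R$ obtained from the weak hypergraph regularity lemma. Apply it to $H$ with parameters much smaller than $\alpha-f(d)$ and $d-1/3$. Then $\mathcal R$ inherits $(d-o(1),o(1))$-density, and up to a few exceptional clusters it inherits the minimum vertex degree $(\alpha-o(1))\binom{t-1}{2}$ in the cluster sense. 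The framework reduces \cref{thm:main-vexdeg-curve} to finding, in $\mathcal R$, a \emph{dominant tight component} $\mathcal C$, meaning a tight component (an equivalence class of edges under sharing a pair, taken transitively) with three properties:
\begin{enumerate}
\item every vertex lies in an edge of $\mathcal C$ (indeed has linear degree into $\mathcal C$);
\item $\mathcal C$ contains a fractional tight-path/matching structure covering all but $o(t)$ clusters;
\item $\mathcal C$ contains a closed tight walk of length $\not\equiv 0 \pmod 3$.
\end{enumerate}
Absorption, reservoir and connecting lemmas for $(d,\mu)$-dense graphs we take from that framework, since the density controls only linear structures while tight connectivity is supplied inside $\mathcal C$.

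The heart of the argument is a dominant-component lemma. Consider the link graphs $L_v=N_H(v)$; each has at least $\alpha\binom{n-1}2$ edges. Tight components of $H$ correspond to how edges of the links are glued. Suppose no tight component contains, for most vertices $v$, a large share of $L_v$. Then we partition the edges of $H$ into components $\mathcal C_1,\dots,\mathcal C_m$, and for each vertex the link splits into edge-disjoint subgraphs $L_v^{(i)}$. The key structural fact is that two edges of $L_v$ sharing a vertex lie in the same component. Hence distinct $L_v^{(i)}$ are vertex-disjoint in $L_v$, so $L_v$ is a disjoint union of pieces on vertex sets $W_v^{(i)}$.

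Let $\rho$ be the largest fraction of $V$ occupied by a single piece. The degree condition gives $\alpha\le \sum_i \beta_i^2$ with $\sum\beta_i\le1$, so some piece has $\beta\ge\alpha$. Uniform density applied to boxes $X\times Y\times Z$ then forces a contradiction unless the components are arranged like the extremal example. That example splits $V=A\cup B$ with $|A|=\rho n$ and keeps the edges inside $A$ and inside $B$. Its density is $\rho^3+(1-\rho)^3=d$ by \eqref{eq:rho-identity}, and its minimum degree is $\approx f(d)^2$ relative to $\binom n2$. Since $\alpha>f(d)$ while that example has vertex degree $\approx\min(\rho,1-\rho)^2<f(d)$, the union of all components but one must have density bounded away from $d$.

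Quantitatively, we would show the following. Let $\mathcal C$ be the component maximising $\sum_v|L_v^{(\mathcal C)}|$, and let $U$ be the set of vertices whose $\mathcal C$-link is small. Then edges meeting $U$ use other components, and those components avoid all pairs in $\mathcal C$-links. Apply density to $X=Y=Z=V$, to boxes $U\times U\times U$, and to boxes $U\times (V\setminus U)\times(V\setminus U)$. Optimising over $|U|/n$ yields $d\le\rho^3+(1-\rho)^3$ with $1-\rho\le\alpha$ roughly, which contradicts $\alpha>f(d)$. This gives property (i).

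For (ii), we would use $d>1/3$. A fractional tight matching in $\mathcal C$ covering almost all vertices follows from LP duality. A fractional cover of $\mathcal C$ of weight $<t/3$ together with density $>1/3$ and the vertex degree would produce a box with too few edges.

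For (iii), we would argue as follows. If every closed walk in $\mathcal C$ had length divisible by $3$, then $\mathcal C$ would be $3$-partite-like, with a vertex $3$-colouring in which every edge is rainbow. The box $X=Y=Z=$(one colour class) then gives density $0$ on a linear-size box, contradicting $d>0$.

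We expect the main obstacle to be the sharp optimisation in the dominant-component lemma. Showing that the worst configuration of components is the two-clique split $\rho n,(1-\rho)n$, and not some configuration with three or more parts or overlapping link pieces, requires a careful convexity argument. The bound $d>1/3$ should enter precisely to exclude the configurations with three or more parts, since the balanced three-clique split has density $1/9<1/3$ and the crossing constructions sit at $1/3$.
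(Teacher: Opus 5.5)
Your plan has genuine gaps, beginning with the reduction. A reduced $3$-graph $\mathcal R$ from weak hypergraph regularity records only triple densities, so it cannot see tight components. In the construction of \cref{prop:vertex-obstruction}, every cluster triple has density about $d$. Hence $\mathcal R$ is essentially complete and satisfies your (i)--(iii) and any inherited degree condition, yet $H$ has two tight components and no tight Hamilton cycle. Taking ``connecting lemmas'' from the absorption framework does not close this gap, because prescribed-end tight connectors are exactly what these hypotheses fail to supply. Indeed, $H$ may have several tight components, and pairs in different shadows cannot be joined at all; your outline never says how connectivity inside one component of $H$ would be proved. The paper avoids absorption. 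Through the inheritance lemmas it applies the Lang--Sanhueza-Matamala embedding lemma (\cref{lem:HBT-special}) to almost all $s$-vertex samples, and connectivity becomes the consistency condition \ref{item:F4}. That condition holds because the chosen component of each sample has shadow density about $\rho>1/2$, so the components of two overlapping samples share a pair.

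Your aperiodicity step is also false. Positive uniform density does not force a closed walk of length $\not\equiv0\pmod 3$, and periodic components need not be vertex $3$-partite. For example, give each pair an antisymmetric uniformly random $\varphi(x,y)\in\mathbb Z_3$, and take the triples whose values $\varphi(u,v),\varphi(v,w),\varphi(w,u)$ along some ordering are $a,a+1,a+2$. This $3$-graph is $(1/9,\mu)$-dense, yet every tight step $(x,y)\to(y,z)$ raises $\varphi$ by one, so all closed walks have length divisible by $3$. The paper instead finds $C_7^{(3)}$ in $C$ via the uniform Tur\'an density $4/27$ of tight cycles (\cref{lem:C7}), using that $C$ has uniform density about $\rho^3>8/27$.

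The dominant-component lemma, which carries the sharp threshold, lacks its mechanism. Your extremal picture is wrong: two disjoint cliques on $A\cup B$ are not uniformly dense, since the box $A\times B\times B$ is empty. The true extremal example is the random red/blue colouring of pairs with monochromatic triples plus two apices, and its minimum degree is $f(d)$, not $f(d)^2$. There the components are separated on pairs rather than vertices, so boxes built from a set $U$ of vertices with small $\mathcal C$-link detect nothing (here $U=\{y\}$). Moreover, the inequalities you aim for, $d\le\rho^3+(1-\rho)^3$ and $1-\rho\lesssim\alpha$, are both upper bounds on $1-\rho$ and do not contradict $\alpha>f(d)$.

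The missing idea is to colour each shadow pair by its tight component, so that every edge of $H$ is a monochromatic triangle. After discarding small shadows (Kruskal--Katona) and applying multicolour graph regularity to the shadows, almost every reduced triangle has monochromatic weight above some $d_0>1/3$. The weighted monochromatic-clique theorem (\cref{thm:weighted-clique}, where $d>1/3$ is genuinely used) then gives one colour $c$ of weight above $d_0$ on almost all reduced pairs. On such triangles the three $c$-weights sum to more than $3\rho(d_0)$, whence $e(\partial C)\ge(\rho-o(1))\binom n2$. Since an edge $vxy\notin C$ forces $xy\notin\partial C$, every vertex has $C$-degree at least $(\alpha-f(d)-o(1))\binom{n-1}2$. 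This is exactly where $\alpha>f(d)$ enters.
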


The condition in \cref{thm:main-vexdeg-curve} is asymptotically sharp for
every $d>1/3$.  Indeed, the construction in \cref{subsec:const-1},
originally introduced in
\cite{AraujoPigaSchacht2022,HanShuWang2021}, gives an
$(n,d,\mu)$-dense $3$-graph with normalized minimum vertex degree
$f(d)-o(1)$ and no tight Hamilton cycle.  Since $f(d)<1/3$ for $d>1/3$
and $f(d)\to1/3$ as $d\downarrow1/3$, the curve has diagonal endpoint
$(1/3,1/3)$.

For reference, we retain the diagonal formulation as the following
immediate consequence, since \cref{cor:main-vexdeg} solves
\cite[Problem~8.3(i)]{AraujoPigaSchacht2022} and confirms
\cite[Conjecture~8.1]{HanShuWang2021}.

\begin{corollary}\label{cor:main-vexdeg}
\label{coro:main-vexdeg}
For every $d,\alpha> 1/3$, there exist $\mu>0$ and
$n_0\in\N$ such that every $(n,d,\mu)$-dense $3$-graph $H$ with
$n\ge n_0$ and
$\delta_1(H)\ge\alpha\binom{n-1}{2}$ contains a tight Hamilton cycle.
\end{corollary}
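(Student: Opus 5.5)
The corollary follows from \cref{thm:main-vexdeg-curve} by a monotonicity argument, so the plan is simply to check that the hypotheses of the corollary put $(d,\alpha)$ inside the region covered by the theorem.

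Fix $d,\alpha>1/3$. We may assume $d\le 1$. If $d>1$, then taking $X=Y=Z=V(H)$ in \eqref{eq:def-density-intro} gives $e_H(V,V,V)\ge dn^3-\mu n^3>n^3$ once $\mu<d-1$. This is impossible, so the statement is vacuous; alternatively one can replace $d$ by $1$, since $(d,\mu)$-density implies $(d',\mu)$-density for every $d'\le d$.

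The key step is to verify $f(d)<1/3<\alpha$ for $d\in(1/3,1]$. Since $4d-1>1/3$, we have $\sqrt{(4d-1)/3}>\sqrt{1/9}=1/3$. Hence
\[
f(d)=\frac{1-\sqrt{(4d-1)/3}}{2}<\frac{1-1/3}{2}=\frac13 .
\]
Therefore $\alpha>1/3>f(d)$. Applying \cref{thm:main-vexdeg-curve} with this pair $(d,\alpha)$ then yields $\mu>0$ and $n_0$ with the required property. Every $(n,d,\mu)$-dense $H$ with $n\ge n_0$ and $\delta_1(H)\ge\alpha\binom{n-1}2$ contains a tight Hamilton cycle.

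There is no real obstacle here: all the difficulty lies in \cref{thm:main-vexdeg-curve}, and the only care needed is the elementary inequality $f(d)<1/3$ together with the trivial handling of $d>1$.
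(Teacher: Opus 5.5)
Your proposal is correct and is the paper's own argument: the paper derives the corollary directly from \cref{thm:main-vexdeg-curve} using $f(d)<1/3$ for $d>1/3$. Your explicit check of that inequality and your handling of the vacuous case $d>1$ are both sound.
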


However, the minimum codegree problem has a different obstruction.  Let
$\kappa\in(1/4,1/3)$ be the unique solution of
$\kappa=(1-\kappa)^3$; numerically, $\kappa \approx 0.3177$.

\begin{theorem}\label{thm:codegree-counterexample}
For every $\eps,\mu>0$, there exists $n_0\in\N$ such that, for every integer
$n\ge n_0$, there is an $(n,\kappa,\mu)$-dense $3$-graph $H$ with
$\delta_2(H)\ge(\kappa-\eps)n$ and no tight Hamilton cycle.
\end{theorem}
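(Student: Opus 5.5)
The plan is an explicit random construction combined with a counting obstruction. Fix a set $S\subseteq V$ with $|S|=\lfloor \kappa n\rfloor$ and write $R:=V\setminus S$, so that $|R|\approx(1-\kappa)n$. On $R$ place a random graph $G$ with edge probability $\kappa$. Let $H$ consist of
\begin{itemize}
\item every triple meeting $S$, and
\item every triple inside $R$ that spans no edge of $G$.
\end{itemize}

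The codegree condition is easy to check. A pair meeting $S$ has codegree close to $n$. A pair $uv\subseteq R$ with $uv\in E(G)$ still has all $w\in S$ as common neighbours, so its codegree is at least $(\kappa-\eps)n$. A pair $uv\subseteq R$ with $uv\notin E(G)$ has codegree at least $|S|$ as well.

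For the density, fix $X,Y,Z$. Triples meeting $S$ have density $1$, so the worst case is $X,Y,Z\subseteq R$. Here I use that a random graph of density $\kappa$ is box-quasirandom with high probability. Then the number of triples in $X\times Y\times Z$ spanning no $G$-edge is $(1-\kappa)^3|X||Y||Z|\pm o(n^3)$. By the definition of $\kappa$ this equals $\kappa|X||Y||Z|-o(n^3)$, which gives $(n,\kappa,\mu)$-density for large $n$. The identity $\kappa=(1-\kappa)^3$ is exactly what makes the interior density equal to the codegree level contributed by $S$, and this is why the threshold is the diagonal point $(\kappa,\kappa)$.

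The main obstacle is to show that $H$ has no tight Hamilton cycle, and I expect the construction will need adjusting to make this step work. Let $C$ be such a cycle. Each vertex of $S$ lies in exactly three consecutive triples of $C$. So at least $n-3|S|\ge(1-3\kappa)n$ consecutive triples lie entirely inside $R$, and all of these must be $G$-independent. The first thing I would try is a counting bound. The cycle restricted to $R$ splits into segments, which are tight paths in the $G$-free $3$-graph. Since $\kappa>1/4$, the complement of $G$ has too few long tight paths to cover $R$ using only about $|S|$ breaks. A cleaner version of this would replace the random $G$ by a random blow-up structure, for instance a random balanced partition of $R$ with prescribed pair colours. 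Then a tight path avoiding $G$ is confined to a small part, and there is a space barrier: covering $R$ requires more than $|S|$ separators.

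If that count does not close, I would switch to a parity or divisibility obstruction on the consecutive pairs. The aim would be to choose $G$ so that every tight walk inside $R$ has a forbidden pattern modulo some invariant, in the spirit of the construction in \cref{subsec:const-1}. The vertices of $S$ would then be too few (namely $\kappa n<n/3$) to reset this invariant often enough.
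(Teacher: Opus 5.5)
\textbf{There is a genuine gap.} Because you declare \emph{every} triple meeting $S$ an edge, your $H$ does contain a tight Hamilton cycle. With $|S|\approx\kappa n$, no choice of $G$, or of any other structure inside $R$, can prevent this.

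As you observe, only $k:=n-3|S|\approx(1-3\kappa)n$ consecutive triples must lie inside $R$. These need not form long tight paths: vertex-disjoint single edges suffice. The density you must guarantee gives $e_H(Y,Y,Y)\ge\kappa|Y|^3-\mu n^3>0$ for every $Y\subseteq R$ with $|Y|\ge n/2$. Since $|R|-3k=2(4|S|-n)\approx0.54n$, you can greedily pick vertex-disjoint edges $T_1,\dots,T_k\subseteq R$. Split the remaining $2(|S|-k)$ vertices of $R$ into pairs $P_{k+1},\dots,P_{|S|}$, and with $S=\{s_1,\dots,s_{|S|}\}$ take the cyclic order
\[
 s_1\,T_1\,s_2\,T_2\,\cdots\,s_k\,T_k\,s_{k+1}\,P_{k+1}\,\cdots\,s_{|S|}\,P_{|S|}.
\]
Every three consecutive vertices either contain some $s_j$ or form some $T_j$, so this is a tight Hamilton cycle. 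Your heuristic that the $G$-free $3$-graph has too few long tight paths is therefore beside the point; it is also false, since $\overline G$ is a random graph of constant density. The blow-up and parity variants fail for the same reason, because the cycle above uses no tight walk of length more than one inside $R$.

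The missing idea, which the paper uses, is to delete exactly the mixed triples that would tightly connect the interior of $R$ to $S$. A triple $xyw$ with $x,y\in R$ and $w\in S$ is an edge only if $xy\in E(G)$; triples with at least two vertices in $S$ remain complete. An interior edge has all three pairs outside $E(G)$, so it can never share a pair with an $S$-meeting edge: that pair would lie in $R$ and be both an edge and a non-edge of $G$. Hence every tight cycle lies in one class.
\begin{itemize}
 \item The interior class misses $S$, so it cannot contain a Hamilton cycle.
 \item In the other class every three consecutive vertices contain a vertex of $S$. So each gap between consecutive $S$-vertices has at most two vertices of $R$, forcing $|R|\le2|S|$. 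This is impossible since $|S|<n/3$.
\end{itemize}

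This change also alters your parameter check. Mixed triples now have density $\kappa$ rather than $1$, and interior triples have density $(1-\kappa)^3$. A $G$-edge in $R$ has codegree $|S|$, while a $G$-non-edge has codegree $\Bin(|R|-2,(1-\kappa)^2)\approx(1-\kappa)^3n$. The identity $\kappa=(1-\kappa)^3$ is what balances both the density bottlenecks and the codegree bottlenecks. Uniform density then follows from McDiarmid's inequality and a union bound over the $8^n$ boxes.
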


Since $\kappa>1/4$, \cref{thm:codegree-counterexample} gives a negative
answer to \cite[Problem~8.3(ii)]{AraujoPigaSchacht2022} and disproves
\cite[Conjecture~8.2]{HanShuWang2021}. 

The next result shows that this obstruction is sharp on the diagonal.

\begin{theorem}\label{thm:main-codeg}
For every $d,\alpha>\kappa$, there exist $\mu>0$ and $n_0\in\mathbb N$ such that every $(n,d,\mu)$-dense $3$-graph $H$ with $n\ge n_0$ and $\delta_2(H)\ge\alpha n$
contains a tight Hamilton cycle.
\end{theorem}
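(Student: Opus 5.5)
We prove \cref{thm:main-codeg} by a Hamilton-framework reduction. Apply the weak hypergraph regularity lemma to $H$ to obtain a reduced $3$-graph $R$ on clusters. We then show that $R$ contains a \emph{Hamilton framework}: a tightly connected subgraph $C\subseteq R$ that (a) contains an almost perfect fractional matching, (b) is non-bipartite in the tight sense (it contains a closed tight walk of length coprime to $3$), and (c) is reachable from every vertex and pair of $H$ via the codegree condition. Once $C$ is found, the standard machinery applies. First, absorption or connecting lemmas in the style of Han, Shu and Wang, with connecting done inside $C$, let us join prescribed ordered pairs by short tight paths. Second, embedding long tight paths into the regular complexes indexed by a fractional matching of $C$ covers all but $o(n)$ vertices. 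Third, the leftover vertices are absorbed, using $\delta_2(H)\ge\alpha n$ with $\alpha>0$ and the linear density. We treat these steps as the common reduction; the real content is showing that a suitable dominant component exists.

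The key step is a \emph{dominant-component lemma for codegree}. In an $(n,d,\mu)$-dense $3$-graph with $\delta_2\ge\alpha n$ and $d,\alpha>\kappa$, we claim there is a tight component $C$ of the shadow-closed structure such that:
\begin{enumerate}
\item almost every pair of $V(H)$ lies in the shadow of $C$;
\item every vertex lies in some edge of $C$.
\end{enumerate}
To prove this, partition the pairs $\binom{V}{2}$ according to which tight component their edges belong to. The codegree condition means every pair has at least $\alpha n$ neighbours, so every pair lies in at least one component. We suppose for contradiction that no component covers a $(1-o(1))$ proportion of pairs, and then use the density condition on suitably chosen sets $X,Y,Z$ to obtain a contradiction. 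Concretely, let $C_1$ be the component with the largest pair-shadow and let $P$ be the set of pairs outside its shadow. Take a vertex $v$ and split $V$ into $A_v$, the vertices $u$ with $uv$ in the shadow of $C_1$, and $B_v=V\setminus A_v$. Each edge in a different component meets $C_1$'s shadow in no pair, and this forces a structure resembling the extremal construction: a vertex partition $V=A\cup B$ with $|B|\approx(1-\kappa')n$, where edges inside $B$ lie in other components. Applying density to $(B,B,B)$ gives at least $d|B|^3$ edges, while the codegree of a pair inside $A$ that must reach into $C_1$ bounds $|A|\ge\alpha n$. Combining these inequalities should yield $\alpha\le(1-\alpha)^3$-type constraints, which fail precisely when $\alpha,d>\kappa$, since $\kappa=(1-\kappa)^3$. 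This optimisation is the step we expect to be the main obstacle. We must rule out all configurations of several competing components, not only the two-part one, and the right choice of test sets $X,Y,Z$ in the density inequality is delicate. We would first try to reduce to the case of two components via an averaging or convexity argument over the pair-weights of the components.

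Finally, we verify properties (a) and (b) for the dominant component. For the fractional matching, it suffices to check an LP-duality condition: a fractional vertex cover of $C$ of weight less than $|V|/3$ would give sets violating density, because density above $1/4$ already supplies many edges in every large triple of sets. For non-bipartiteness (the ``space'' or parity condition), density yields copies of small linear configurations, such as a tight cycle of length $4$ or $5$ through edges of $C$ with the right residue. We also use the codegree to attach every vertex to $C$, so every vertex can be absorbed. Transferring these properties to the reduced graph $R$ with slightly smaller parameters then completes the proof.
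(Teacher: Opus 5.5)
There is a genuine gap, and it is in the step you call the key one: the dominant-component statement you set out to prove is false.

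\textbf{Counterexample.} Take $S\subseteq V$ with $|S|=3n/4$. Colour each pair inside $S$ blue independently with probability $p=0.66$, colour every other pair red, and let the edges be the monochromatic triples. Concentration as in \cref{prop:codegree-obstruction} shows this $3$-graph is $(n,d,\mu)$-dense with $d\approx\min\{p^3+(1-p)^3,\,1-p\}\approx0.3268$, and that $\delta_2\approx\min\{\tfrac34p^2,\,\tfrac34(1-p)^2+\tfrac14\}\,n\approx0.3267n$. Both exceed $\kappa\approx0.3177$. Yet no red edge is tightly adjacent to a blue edge, and blue pairs form about a $0.37$-fraction of $\binom V2$. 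So no tight component has shadow density above roughly $0.63$, and the contradiction you aim for (``no component covers a $(1-o(1))$-proportion of pairs'') cannot be derived by any choice of $X,Y,Z$.

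\textbf{Weak regularity is the wrong tool.} Weak regularity of $H$ cannot see this split. The reduced $3$-graph of the example is essentially complete, hence tightly connected, although $H$ falls into two components.

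\textbf{What is actually true.} The correct statement is the much weaker \cref{lem:codeg-dominant}. It gives a spanning $C$ whose shadow has uniform density only above $\kappa+\tau$ and whose own edges have uniform density only some small $\tau$. There may also be a second component $D$ whose shadow lives on a set $S$ with $|S|\ge(1-\kappa-o(1))n$ (\cref{thm:codeg-weighted-structure}). The paper obtains this by regularising the coloured shadow graphs $\partial C_i$. The constant $\kappa$ enters through \cref{lemma:kappa-inequality}: cherry weights $K_i\ge t>\kappa$ force every colour on a reduced pair to be either light or heavier than $t$. It does not come from a partition $A\cup B$ tested on $(B,B,B)$. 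Spanningness of $C$ then follows from the weighted cherry inequality with threshold $1/4$.

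\textbf{Downstream steps.} Because $C$ need not carry most of $H$, your later steps do not go through as stated.
\begin{itemize}
\item \emph{Fractional matching.} This cannot come from the density of $H$. In \cref{prop:codegree-obstruction} the class $\mathcal E_B$ is a spanning tight component of a $\kappa$-dense $3$-graph, yet it is covered by $B$ with $|B|<n/3$. You need uniform density and positive minimum vertex degree of $C$ itself, and then \cref{lem:perfect-matching} applied to a blow-up.
\item \emph{Aperiodicity.} Tight cycles are not linear, so linear quasirandomness does not count them. The relevant input is \cref{lem:C7} (uniform Tur\'an density $4/27$), which applies to $H$ but not to $C$. In the example above the red component has density only $(1-p)^3\approx0.04$ on $S$. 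This is why the paper finds $C_7^{(3)}$ inside $C[T]$ for a linear-size set $T$ outside $S$ on which $\partial D$ is sparse.
\item \emph{Connecting (most serious).} You delegate connecting prescribed ordered pairs inside $C$ to ``standard'' Han--Shu--Wang machinery, with no argument that pairs of $\partial C$ are joined by many short tight paths. That is exactly the ingredient not available at these densities, which is why the paper uses \cref{lem:HBT-special} instead.
\end{itemize}

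In the paper's route, the only global connectivity input is the consistency condition (F4). It is proved via \cref{lem:shadow-consistency}: two edge-disjoint graphs, each of positive uniform cut density and with every edge having at least $qs$ common neighbours in its own graph for some $q>1/4$, cannot coexist. Your proposal has no counterpart to this step.
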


\subsection{Ideas of the proofs}

The lower bounds follow from two distinct probabilistic constructions;
see \cref{prop:vertex-obstruction,prop:codegree-obstruction}. In both cases, randomness guarantees uniform density and the required
minimum degree, whereas deterministic obstructions rule out a tight
Hamilton cycle.  Both constructions separate the relevant edge classes
into distinct tight components, and the codegree construction additionally
uses a space barrier.  The codegree construction, which gives the critical
value $\kappa$, is new. 

The main novelty lies in the proofs of
\cref{thm:main-vexdeg-curve,thm:main-codeg}.  Here conditions are too
weak to directly provide the prescribed-end tight connectors required by the usual
absorption method. Instead, we employ a recently developed Hamilton framework due to Lang
and Sanhueza-Matamala~\cite{LangSanhueza2026} and reduce both theorems
to finding, inside almost every induced subgraph of a sufficiently large
fixed order, a local Hamilton framework supported by a spanning tight
component. To locate such a component, we
color each pair in the shadow by the tight component containing it and prove
two essentially different dominant-component lemmas. The dominant components then give the local
Hamilton frameworks in
\cref{lem:local-vdeg-framework,lem:local-codeg-framework}, which together with the inheritance lemmas and the
Hamilton-framework embedding theorem, complete the proofs of
\cref{thm:main-vexdeg-curve,thm:main-codeg}.

In the minimum vertex degree setting, the main obstacle is that the
component coloring may use an unbounded number of colors.  We first apply
the Kruskal--Katona theorem to ignore components with small shadows and then use the multicolor regularity lemma to obtain a finite weighted coloring of a
reduced graph.  The key new ingredient is the weighted monochromatic-clique
theorem (\cref{thm:weighted-clique}): if the total monochromatic weight of every triangle exceeds some
$t\ge1/3$, then one color has weight greater than $t$ on every pair.
Its robust form forces one tight component to dominate almost all reduced
pairs.  After transferring this information back to the original
$3$-graph, we obtain a dominant-component lemma (\cref{lem:v-dominant}): for
$\rho=\rho(d)$, one spanning tight component has shadow density about
$\rho$, uniform density about $\rho^3$, and normalized minimum vertex
degree at least $\alpha+\rho-1-o(1)$. This quantity is positive precisely when
$\alpha>1-\rho(d)=f(d)$, thereby explaining the sharp boundary in
\cref{thm:main-vexdeg-curve}.

The minimum codegree setting requires a different argument, since
$\kappa<1/3$ lies below the range of the weighted clique theorem. Here the
codegree condition gives strong common-neighbor information inside every
component shadow.  After transferring this information to the reduced
weighted graph, a stability analysis around
$\kappa=(1-\kappa)^3$
together with a robust two-color common-neighbor inequality forces one
component to dominate and be spanning; see \cref{lem:codeg-dominant}.
The consistency condition in the Hamilton framework is supplied
separately by the sharp shadow-consistency lemma,
\cref{lem:shadow-consistency}.

\subsection{Organization}
This paper is organized as follows.
\Cref{sec:preliminaries} collects the preliminary notions, auxiliary
results, and tools used throughout the paper. The two sharpness constructions are
given in~\cref{sec:constructions}. In~\cref{sec:framework}, we
state the main structural lemmas, construct local Hamilton frameworks
in the minimum-vertex-degree and minimum-codegree settings, and deduce
\cref{thm:main-vexdeg-curve,thm:main-codeg} from these structural results;
the diagonal statement \cref{coro:main-vexdeg} follows immediately.
The proof of the main lemma in the minimum-vertex-degree case is given
in~\cref{sec:pf-of-v-dominant}, together with the weighted
monochromatic-clique theorem required for its proof. The two main lemmas
in the minimum-codegree case are proved in~\cref{sec:pf-of-co-dominant}.
We conclude with further remarks in~\cref{sec:remark}.

\section{Preliminaries}\label{sec:preliminaries}

For a positive integer $n$, write $[n]:=\{1,\ldots,n\}$. For a graph $G$
and sets $X,Y\subseteq V(G)$, let
\[
 e_G(X,Y):=|\{(x,y)\in X\times Y:x\ne y,\ xy\in E(G)\}|.
\]
In particular,
$e_G(X,X)=2e(G[X])$. We write $e_G(X):=e(G[X])$ and $\overline G$ for
the complement of $G$ on $V(G)$.

Two edges of a $3$-graph are \emph{tightly adjacent} if they share a pair.
A \emph{tight component} of a $3$-graph $H$ is an edge-maximal tightly
connected subgraph, and it is \emph{spanning} if its vertex set is $V(H)$.
The \emph{shadow} of $H$ is
\[
 \partial H:=\bigl\{xy\in\tbinom{V(H)}2:
 xyz\in E(H)\text{ for some }z\in V(H)\bigr\}.
\]
The shadows of distinct tight components are edge-disjoint.

\subsection{Hamilton frameworks}
We begin with the Hamilton-framework formalism of Lang and
Sanhueza-Matamala~\cite{LangSanhueza2026}, in the specialized form used
throughout the paper.
A \emph{perfect fractional matching} in a $3$-graph $H$ is a function
$\omega:E(H)\to[0,1]$ such that for each $v\in V(H)$ we have $\sum_{v\in e\in E(H)}\omega(e)=1$. A \emph{closed walk of order $m$} in $H$ is the image of a
homomorphism $C_m^{(3)}\to H$, where $C_m^{(3)}$ denotes the tight cycle of
order $m$.

\begin{definition}
Let $\cP_s$ be a family of $s$-vertex $3$-graphs. We say
that $\cP_s$ \emph{admits a Hamilton framework} if, for every
$H\in\cP_s$, one can choose a subgraph $\cF(H)\subseteq H$ satisfying
\begin{enumerate}[label=(F\arabic*),leftmargin=3.2em]
 \item\label{item:F1} $\cF(H)$ is a spanning tight component of $H$;
 \item\label{item:F2} $\cF(H)$ has a perfect fractional matching;
 \item\label{item:F3} $\cF(H)$ has a closed walk of order $1$ modulo $3$;
 \item\label{item:F4} whenever $J$ is an $(s+1)$-vertex $3$-graph such that
 $H:=J-x$ and $H':=J-y$ both belong to $\cP_s$ for distinct $x,y\in V(J)$, the union
 $\cF(H)\cup\cF(H')$ is tightly connected in $J$.
\end{enumerate}
\end{definition}

These conditions correspond to connectivity, space, aperiodicity, and consistency, respectively. The following lemma shows that their robust local presence suffices to guarantee Hamiltonicity, which is the case $k=t=3$ of the general hypergraph bandwidth theorem of
Lang and Sanhueza-Matamala
\cite[Theorem~3.5]{LangSanhueza2026}.

\begin{lemma}[Embedding lemma for Hamilton frameworks]
\label{lem:HBT-special}
Fix $s\ge6$, and let $\cP_s$ be a family of $s$-vertex $3$-graphs that
admits a Hamilton framework. There exists $n_0\in\N$ such that the
following holds. Let $H$ be an $n$-vertex $3$-graph with $n\ge n_0$.
Suppose that, for every $6$-set $R\subseteq V(H)$, at least $\left(1-\frac1{s^2}\right)\binom{n-6}{s-6}$
sets $S\in\binom{V(H)}s$ with $R\subseteq S$ satisfy $H[S]\in\cP_s$.
Then $H$ contains a tight Hamilton cycle.
\end{lemma}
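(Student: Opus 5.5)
This lemma is a specialization of \cite[Theorem~3.5]{LangSanhueza2026}, so the proof is a translation rather than a new argument. I would fix the general bandwidth/Hamilton-framework theorem with uniformity $k=3$ and the parameter $t=3$. Their theorem is phrased for a property $\cP$ of $s$-vertex $k$-graphs that ``admits a Hamilton framework'', and concludes that any large $n$-vertex $k$-graph whose $s$-vertex induced subgraphs robustly lie in $\cP$ contains a tight Hamilton cycle. The plan is to check three things: that their notion of admitting a framework coincides with \ref{item:F1}--\ref{item:F4}, that their robustness hypothesis is implied by the $6$-set condition above, and that their conclusion covers tight Hamilton cycles.

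The first step is to match the definitions. In the general theorem a framework is a tightly connected subgraph $\cF(H)$ that meets three requirements: it supports a perfect fractional matching (space), it contains a closed walk whose order is coprime to $k$ (aperiodicity), and it is consistent under vertex swaps in $(s+1)$-vertex graphs. For $k=3$, a closed walk of order $1 \bmod 3$ supplies the needed coprimality. I would then note that requiring $\cF(H)$ to be a spanning tight component is stronger than what is needed. The general ``connected'' condition concerns the component containing the framework, and spanning guarantees that every vertex is covered. Hence \ref{item:F1}--\ref{item:F4} imply their hypotheses. A small point is that their consistency condition may be stated for $J-x, J-y$ belonging to $\cP$. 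This is exactly \ref{item:F4}.

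The second step is the robustness condition. In \cite{LangSanhueza2026} the hypothesis says that $H$ ``robustly'' satisfies $\cP_s$ in the following sense. For every set $R$ of size $2k=6$, i.e.\ $t+k$ or similar, a $(1-1/s^2)$-fraction of the $s$-sets containing $R$ induce members of $\cP_s$, and $s$ is large compared to $k$. I would verify that the threshold $s\ge 6$ and the fraction $1-1/s^2$ agree with their choice when $k=t=3$. If their statement instead uses a density parameter $1-\eta$ with $\eta$ depending on $s$, I would fix $\eta\ge 1/s^2$, so that our hypothesis is at least as strong.

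The main obstacle is this bookkeeping: making sure the quantifier order ($n_0$ depending only on $s$ and on the family) and the size of $R$ match the published statement exactly. Once they agree, the lemma follows directly, and $n_0$ is taken from their theorem.
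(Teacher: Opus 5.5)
Your proposal takes the same approach as the paper, which gives no independent argument and simply cites this lemma as the case $k=t=3$ of Lang and Sanhueza-Matamala's Theorem~3.5, with (F1)--(F4) as the specialized form of their connectivity, space, aperiodicity and consistency conditions. Your definitional matching (a spanning tight component and a closed walk of order $1 \bmod 3$ suffice for their requirements, and the $6$-set condition with fraction $1-1/s^2$ is their robustness hypothesis) is the bookkeeping that this citation leaves implicit; the paper does not spell it out either.
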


\subsection{Inheritance and extremal tools}

We next record the two inheritance statements needed to apply
\cref{lem:HBT-special}. The density statement is from
\cite[Lemma~8.1]{Lang2026}, and the degree statement is from~\cite[Lemma~4.3]{LangSanhueza2026}.

\begin{lemma}\label{lem:density-inheritance}
For $1/r,d,\mu'\gg1/s\gg\mu\gg1/n$, let $H$ be an
$(n,d,\mu)$-dense $3$-graph. Then, for every
$R\in\binom{V(H)}r$, all but at most
$e^{-s^{1/6}}\binom{n-r}{s-r}$ sets
$S\in\binom{V(H)}s$ with $R\subseteq S$ induce an
$(s,d,\mu')$-dense $3$-graph.
\end{lemma}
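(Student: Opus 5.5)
The plan is to prove \cref{lem:density-inheritance} by a sampling argument. First we reduce the density condition in $H[S]$ to finitely many checks. Then we handle each check by a concentration inequality for a random $s$-set containing $R$. Finally we take a union bound over the checks.

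\textbf{Discretisation.} Fix $R\in\binom{V(H)}r$. Let $S=R\cup T$, where $T$ is a uniformly random $(s-r)$-subset of $V(H)\setminus R$. Since $r\ll s$, the vertices of $R$ carry only $O(rs^2)\le(\mu'/10)s^3$ triples, so it suffices to verify the density condition for $X,Y,Z\subseteq T$.

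The condition must hold for all $2^{3s}$ triples $(X,Y,Z)$ of subsets of $S$. This is far too many for a naive union bound against a failure probability of order $e^{-cs}$. We therefore use a weak-regularity-type reduction. Choose an integer $m=m(\mu')$ and split $V(H)$ into parts $V_1,\dots,V_m$ forming a weak (Frieze--Kannan) regular partition of $H$ with error $\mu'/10$; the number of parts depends only on $\mu'$. The cut-norm approximation then transfers to random samples. This is the step where the known sampling lemma for the cut norm (Alon--Fernandez de la Vega--Kannan--Karpinski, or Czygrinow--Nagle for hypergraphs) is used. It gives that, with probability $1-e^{-\Omega(s)}$, $H[T]$ is $\mu'/4$-close in cut norm to the blow-up of the same weighted reduced hypergraph restricted to $T$.

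\textbf{Transfer.} By the Hoeffding bound for hypergeometric distributions, together with a union bound over the $m^3$ triples of parts, with probability $1-e^{-\Omega(s)}$ every part $V_i$ meets $T$ in $(|V_i|/n)s\pm\mu' s/10$ vertices. Suppose there were a violating triple $X,Y,Z\subseteq T$. Its profile, meaning the fractions of $X,Y,Z$ in each $V_i\cap T$, can be lifted to sets $X',Y',Z'\subseteq V(H)$ of the same profile by taking random subsets of each $V_i$. The density assumption on $H$, combined with the cut-norm closeness on both sides, then gives $e_{H[S]}(X,Y,Z)\ge d|X||Y||Z|-\mu' s^3$, a contradiction.

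\textbf{Main obstacle.} The hardest step is the sampling lemma, because the failure bound must reach $e^{-s^{1/6}}$ while $\mu'$ is fixed. I would take the standard proof, in which each of about $s^{1/2}$ sampled "test" vertex subsets gives a martingale with Azuma concentration. This yields failure probability $\exp(-\Omega(\mu'^{O(1)}s^{1/2}))$, which is at most $e^{-s^{1/6}}$ once $1/s\ll\mu'$. The leftover slack from $\mu\ll1/s$ absorbs the error $\mu n^3$ scaled to $\mu s^3$. Counting gives at most $e^{-s^{1/6}}\binom{n-r}{s-r}$ bad sets.
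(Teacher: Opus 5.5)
Your argument is correct, but it takes a different route from the paper. The paper gives no proof of this statement; it imports it as \cite[Lemma~8.1]{Lang2026}. You rebuild it from two ingredients: Frieze--Kannan weak regularity, and a box-norm sampling lemma for $3$-dimensional arrays (Alon--Fernandez de la Vega--Kannan--Karpinski type).

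The regularity step is well chosen. The array $d-\one_H$ is small in the box norm only from one side, so a two-sided sampling lemma cannot be applied to it directly. By contrast, $\one_H-W$ is small in the two-sided norm, and the step structure of $W$ lets you carry the one-sided lower bound from $H$ to $H[T]$ through your profile lifting. If you had a one-sided sampling lemma, like the $\|\cdot\|_\square^+$ version in the graph case, you could skip regularity and sample $d-\one_H$ directly. The citation buys brevity and the exact exponent $s^{1/6}$; your route shows where the inheritance actually comes from.

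Your ``main obstacle'' is milder than you suggest. Consider the maximum over boxes $X\times Y\times Z$ in the sample of $\bigl|\sum_{X\times Y\times Z}(\one_H-W)\bigr|$. Replacing one sampled vertex changes this quantity by $O(s^2)$. Apply McDiarmid's inequality to an i.i.d.\ sample from $V(H)\setminus R$, which has no repeated vertices with probability $1-O(s^2/n)$. This upgrades any sampling lemma that holds with constant probability to failure probability $e^{-\Omega_{\mu'}(s)}$, which is below $e^{-s^{1/6}}$ once $1/s\ll\mu'$. So the $s^{1/2}$ test-set Azuma sketch is unnecessary. You should also not assert both $e^{-\Omega(s)}$ and $\exp(-\Omega(s^{1/2}))$ for the same step.

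There are two small quantitative slips:
\begin{enumerate}
\item The mismatch between the profiles of $(X,Y,Z)$ and $(X',Y',Z')$ accumulates over all $m=m(\mu')$ parts. You therefore need $|V_i\cap T|=(|V_i|/n)s\pm\mu's/(10m)$, not $\pm\mu's/10$.
\item The union bound runs over the $m$ parts, not over $m^3$ triples of parts.
\end{enumerate}
Both are harmless because $1/s\ll\mu'$.
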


\begin{lemma}\label{lem:degree-inheritance}
For $1/r,\eps\gg1/s\gg1/n$, let $\ell\in\{1,2\}$ and $\alpha\ge0$.
If an $n$-vertex $3$-graph $H$ satisfies $\delta_\ell(H)\ge(\alpha+\eps)
 \binom{n-\ell}{3-\ell}$,
then for every $R\in\binom{V(H)}r$, all but at most
$e^{-\sqrt s}\binom{n-r}{s-r}$ sets
$S\in\binom{V(H)}s$ with $R\subseteq S$ satisfy
$\delta_\ell(H[S])\ge\alpha\binom{s-\ell}{3-\ell}$.
\end{lemma}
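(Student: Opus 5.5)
The statement is \cref{lem:degree-inheritance}, which is cited from \cite[Lemma~4.3]{LangSanhueza2026}. I would nevertheless prove it by a standard concentration-plus-union-bound argument. Fix $R\in\binom{V(H)}r$ and choose $S$ uniformly among the $s$-sets containing $R$. Equivalently, $S=R\cup T$ with $T$ a uniform $(s-r)$-subset of $V(H)\setminus R$. It suffices to show that for each $\ell$-set $L\subseteq S$ the failure probability is at most $e^{-2\sqrt s}/s^{\ell}$. We cannot union bound over all $\ell$-sets of $V(H)$, so the union bound is taken over the positions inside $S$, handled as follows.

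Condition on which $\ell$-set $L$ of $S$ we look at. By symmetry we may treat $L$ as a fixed set of $\ell$ vertices, either inside $R$ or partly drawn from $T$. Let $L$ be given, and let $S'=S\setminus(L\cup R)$, which is a uniform random subset of size about $s-r-\ell$ from $V(H)\setminus(R\cup L)$. We compare $\deg_{H[S]}(L)$ with $\deg_{H[S']}(L)$, where the latter counts only the neighbour sets of $L$ inside $S'$. The two differ by at most $O(r s^{2-\ell})$, which is at most $\frac{\eps}{2}\binom{s-\ell}{3-\ell}$ since $1/r\gg 1/s$ is not needed here; we only need $r\ll \eps s$, which follows from $1/r,\eps\gg1/s$. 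The number of removed vertices $|R|$ is likewise negligible.

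\textbf{Case $\ell=2$.} Here $\deg_{H[S']}(L)$ is hypergeometric with mean at least $(\alpha+\eps-o(1))(s-2)$. Chernoff's bound for the hypergeometric distribution gives failure probability $\exp(-\Omega(\eps^2 s))$.

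\textbf{Case $\ell=1$.} Here $\deg_{H[S']}(v)$ counts edges of the link graph $N_H(v)$ inside a random set, with mean at least $(\alpha+\eps-o(1))\binom{s-1}{2}$. Changing one vertex of $S'$ alters the count by at most $s$, so McDiarmid's inequality, or Azuma's inequality for the vertex-exposure martingale on a random subset, gives deviation $\frac{\eps}{2}\binom{s-1}{2}$ with probability at most $\exp(-\Omega(\eps^2 s^4/(s\cdot s^2)))=\exp(-\Omega(\eps^2 s))$.

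Finally, take the union bound. The event that some $\ell$-set $L\subseteq S$ has low degree has probability at most $\sum_L \Pr[L\subseteq S]\cdot\Pr[\text{fail}\mid L\subseteq S]$. Its expected count is $\binom{s}{\ell}\cdot\exp(-\Omega(\eps^2 s))\le s^2e^{-c\eps^2 s}\le e^{-\sqrt s}$, since $1/s\ll\eps$. For fixed $L$, conditioning on $L\subseteq S$ leaves the rest of $S$ uniform, so the conditional concentration bound above applies. By Markov's inequality the fraction of bad $S$ is at most $e^{-\sqrt s}$.

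The main technical point is the $\ell=1$ concentration, which requires a bounded-differences inequality for sampling without replacement. I would invoke the standard version of McDiarmid's inequality for uniform random $m$-subsets, in which the Lipschitz constant is for swapping one element.
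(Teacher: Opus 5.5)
Your proof is correct. The paper does not prove this lemma itself; it imports it verbatim from \cite[Lemma~4.3]{LangSanhueza2026}. What you give is the standard self-contained argument for such sampling statements. You first prove concentration for a single $\ell$-set: a hypergeometric tail bound for $\ell=2$, and for $\ell=1$ a bounded-differences inequality for uniformly random $m$-subsets with $m=|S'|\le s$ and swap constant at most $s$. You then take a first-moment union bound over the $\binom{s}{\ell}$ expected $\ell$-subsets of $S$, which correctly avoids a union bound over all $\ell$-sets of $V(H)$. The citation gives the paper the general version (arbitrary uniformity and degree type) at no cost, while your version is specialized to $3$-graphs but can be checked from scratch.

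Two details you leave implicit are harmless but worth stating:
\begin{enumerate}
\item One may assume $\alpha+\eps\le1$, since otherwise no such $H$ exists. Then the loss of order $(\alpha+\eps)\,r/s$ in the mean, relative to $\binom{s-\ell}{3-\ell}$, is absorbed once $r\le\eps s/100$.
\item For $\ell=1$ the concentration must be the subset version, for instance Azuma for the Doob martingale exposing $S'$ one element at a time, with variance proxy about $|S'|s^2$. The permutation inequality on the whole $n$-set would only give $\exp(-\Omega(\eps^2s^2/n))$. Your exponent computation shows you are already using the subset version.
\end{enumerate}
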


The next two results provide the space and aperiodicity conditions in the
Hamilton framework. The first is the following perfect-matching consequence of a
perfect-packing theorem of Lenz and Mubayi \cite[Theorem~3]{LenzMubayi2016}. The second follows from the
determination of the uniform Tur\'an density of tight cycles by Buci\'c,
Cooper, Kr\'a\v{l}, Mohr, and Munh\'a Correia
\cite[Theorem~1.1]{BucicEtAl2023}.

\begin{lemma}\label{lem:perfect-matching}
For every $d,\alpha>0$, there exist $\mu>0$ and $n_0\in\N$ such that
every $(n,d,\mu)$-dense $3$-graph $H$ with $n\ge n_0$, $3\mid n$, and
$\delta_1(H)\ge\alpha\binom{n-1}{2}$ has a perfect matching.
\end{lemma}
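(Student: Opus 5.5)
The plan is to derive \cref{lem:perfect-matching} directly from the perfect-packing theorem of Lenz and Mubayi \cite[Theorem~3]{LenzMubayi2016}, specialised to the single-edge $3$-graph $F=K^{(3)}_3$. Their theorem says the following. For every linear $3$-graph $F$ and all $d,\alpha>0$ there are $\mu>0$ and $n_0$ such that every $(n,d,\mu)$-dense $3$-graph $H$ with $n\ge n_0$, $|V(F)|\mid n$ and $\delta_1(H)\ge\alpha\binom{n-1}{2}$ has an $F$-factor. Here linear means that any two edges share at most one vertex.

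\begin{itemize}
\item First, check that $F$ is admissible. A single edge is trivially linear, and $|V(F)|=3$, so the divisibility hypothesis is exactly $3\mid n$.
\item Second, match the definitions. Lenz and Mubayi's density notion may be stated for disjoint sets, or with $|e(X,Y,Z)-d|X||Y||Z||$ for some fixed $d$. Our one-sided condition \eqref{eq:def-density-intro} for all $X,Y,Z$ implies the one-sided hypothesis for disjoint triples. Their packing proof only needs the lower bound, since the argument counts many edges via regularity/absorption and never uses an upper bound. If their version counts unordered edges, we rescale $d$ and $\mu$ by the constant factor $6$.
\item Third, if their degree condition is normalised as $\delta_1(H)\ge\alpha n^2$ rather than $\alpha\binom{n-1}{2}$, we replace $\alpha$ by $\alpha/3$ for large $n$.
\end{itemize}

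With these choices, an $F$-factor is precisely a perfect matching, which gives the lemma with the $\mu$ and $n_0$ supplied by the theorem.

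The only real obstacle is the second step, reconciling conventions, so that our one-sided, ordered, possibly overlapping-sets definition implies their hypothesis. The lower bound for disjoint sets is immediate by restriction. If their theorem genuinely needs a two-sided condition, which I believe it does not, we would fall back on an absorbing argument.

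\begin{itemize}
\item Build an absorbing set: for a vertex $v$, the minimum degree gives many edges $vxy$, and density gives many edges $abc$ completing absorbing configurations.
\item Find an almost perfect matching: a greedy argument using density on the leftover sets $X=Y=Z$ of linear size leaves at most $\mu^{1/4}n$ vertices.
\item Absorb the leftover vertices.
\end{itemize}

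I expect the citation route to suffice.
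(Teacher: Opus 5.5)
Your proposal is correct and follows the paper's route exactly: the paper does not prove this lemma separately but takes it as the special case $F=K_3^{(3)}$ (a single edge, trivially linear) of the Lenz--Mubayi perfect-packing theorem \cite[Theorem~3]{LenzMubayi2016}, so that an $F$-factor is a perfect matching. Your adjustments to match the density and degree normalizations are routine, and the absorbing fallback is not needed.
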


\begin{lemma}\label{lem:C7}
For every $d>4/27$, there exist $\mu>0$ and $n_0\in\N$ such that every
$(n,d,\mu)$-dense $3$-graph with $n\ge n_0$ contains a copy of
$C_7^{(3)}$.
\end{lemma}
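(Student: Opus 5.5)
The plan is to deduce \cref{lem:C7} directly from the theorem of Buci\'c, Cooper, Kr\'a\v{l}, Mohr and Munh\'a Correia \cite[Theorem~1.1]{BucicEtAl2023}. That theorem states that the uniform Tur\'an density of the tight cycle $C_\ell^{(3)}$ equals $4/27$ for every $\ell\ge5$ with $3\nmid\ell$. Since $7\ge5$ and $3\nmid7$, it applies to $C_7^{(3)}$. The only real work is to check that $(n,d,\mu)$-density in the sense of \eqref{eq:def-density-intro} is at least as strong as the hypothesis used in the definition of uniform Tur\'an density.

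First I recall that definition. The uniform Tur\'an density $\pi_u(F)$ is the infimum of those $d$ such that, for every $\eta>0$, there are $\mu>0$ and $n_0$ with the following property: every $n$-vertex $3$-graph ($n\ge n_0$) satisfying
\[
 e(H[X])\ge d\tbinom{|X|}{3}-\mu n^3 \quad\text{for all } X\subseteq V(H)
\]
contains $F$. Some formulations instead use $(d+\eta)$ or a strict inequality. In every formulation, the statement $\pi_u(C_7^{(3)})=4/27$ gives the following: for each $d>4/27$ there exist $\mu_0>0$ and $n_0$ such that every $n$-vertex $3$-graph with $n\ge n_0$ that is uniformly dense in the single-set sense with parameters $(d',\mu_0)$, where $d'\in(4/27,d)$ is fixed, contains $C_7^{(3)}$.

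Next I compare the two notions. Given $H$ that is $(n,d,\mu)$-dense, apply \eqref{eq:def-density-intro} with $X=Y=Z$. Each edge inside $X$ is counted $6$ times as an ordered triple, so
\[
 6e(H[X])=e_H(X,X,X)\ge d|X|^3-\mu n^3\ge 6d\tbinom{|X|}{3}-\mu n^3,
\]
using $|X|^3\ge 6\binom{|X|}3$. Hence $e(H[X])\ge d\binom{|X|}3-\mu n^3$ for all $X\subseteq V(H)$. Therefore $H$ is uniformly dense in the single-set sense with density $d>d'$ and error $\mu$. Choosing $\mu\le\mu_0$ and taking the same $n_0$, the theorem yields a copy of $C_7^{(3)}$ in $H$.

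The only potential obstacle is bookkeeping. The cited paper may phrase uniform density with a normalization such as $\mu|V(H)|^3$ versus $\mu\binom n3$, or through a limit or infimum definition of $\pi_u$. I would handle this by fixing $d'$ strictly between $4/27$ and $d$, so that all slack is absorbed, and by rescaling $\mu$ by an absolute constant if needed. No combinatorial argument beyond the cited theorem is required.
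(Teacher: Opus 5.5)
Your proposal is correct and follows the paper's route exactly: the paper obtains \cref{lem:C7} directly from the determination of the uniform Tur\'an density of tight cycles by Buci\'c, Cooper, Kr\'a\v{l}, Mohr and Munh\'a Correia. Your computation with $X=Y=Z$ in \eqref{eq:def-density-intro}, which gives $e(H[X])\ge d\binom{|X|}{3}-\mu n^3/6$, is the compatibility check between the two density notions that the paper leaves unstated.

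One small refinement concerns the threshold form of the definition, in which every $X$ with $|X|\ge\varepsilon n$ spans at least $d'\binom{|X|}{3}$ edges and there is no additive error. In that form you must take $\mu$ of order $\varepsilon^3(d-d')$ rather than rescale by an absolute constant. Your slack $d'<d$ allows this.
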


Finally, we use the Lov\'asz form of the Kruskal--Katona theorem
\cite[Theorem~1]{Keevash2008}.

\begin{lemma}\label{lem:kk-triangle}
Every graph $G$ with $m$ edges contains at most
$(2m)^{3/2}/6$ triangles.
\end{lemma}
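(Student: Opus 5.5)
The statement is the Lovász form of Kruskal--Katona: a graph $G$ with $m$ edges has at most $(2m)^{3/2}/6$ triangles. The plan is to bound the triangle count by an edge-sum of codegrees, and then bound each codegree in two different ways.

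For an edge $uv$, every triangle through $uv$ consists of $u$, $v$ and a common neighbour. Each triangle contains three edges, so
\[
 3t(G)=\sum_{uv\in E(G)}|N(u)\cap N(v)|\le\sum_{uv\in E(G)}\min\{\deg u,\deg v\}-\text{(small correction)}.
\]
The first bound on a codegree is $|N(u)\cap N(v)|\le\min\{\deg u,\deg v\}$. A triangle-free-type counting bound is not needed here. Instead, use the elementary estimate $\min\{a,b\}\le\sqrt{ab}$, together with the bound $\sqrt{\deg u\deg v}\le$ something controlled by $m$. The cleaner route is the standard one: $\sum_{uv\in E}\min(\deg u,\deg v)\le\sum_{uv\in E}\sqrt{2m}$ only if each $\min(\deg u,\deg v)\le\sqrt{2m}$, which is false in general. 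So replace it with the Chiba--Nishizeki type bound
\[
 \sum_{uv\in E}\min(\deg u,\deg v)\le 2m\sqrt{2m}\cdot\tfrac12 .
\]
To prove it, orient each edge toward its higher-degree endpoint and charge $\min(\deg u,\deg v)$ to the lower endpoint $u$. Then observe that $\min(\deg u,\deg v)\le\sqrt{\deg u\deg v}$ and apply Cauchy--Schwarz. This gives $3t\le$ a bound of order $m^{3/2}$, but with a constant of roughly $\sqrt2\, m^{3/2}$, i.e.\ $t\le(2m)^{3/2}/6$ exactly when the constants are matched. So I would fix the sharp constant differently, as follows.

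For the sharp constant I would instead argue spectrally. Let $A$ be the adjacency matrix with eigenvalues $\lambda_i$. Then
\[
 6t=\operatorname{tr}A^3=\sum\lambda_i^3\le\max_i|\lambda_i|\sum\lambda_i^2,
\]
and $\sum\lambda_i^2=\operatorname{tr}A^2=2m$. Moreover $\max_i|\lambda_i|^2\le\sum\lambda_i^2=2m$, so $\max_i|\lambda_i|\le\sqrt{2m}$. Hence $6t\le(2m)^{3/2}$, as required.

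The only point needing care is the inequality $\sum\lambda_i^3\le\max|\lambda_i|\sum\lambda_i^2$. This holds termwise, since $\lambda_i^3\le|\lambda_i|\,\lambda_i^2$. I expect no real obstacle here. This spectral route replaces the combinatorial charging argument sketched above, which loses the sharp constant, so I would present only the spectral argument.
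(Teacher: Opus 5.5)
Your spectral argument is correct and complete. For a simple graph, $\operatorname{tr}A^3=6t(G)$, since each triangle gives six closed $3$-walks, and $\operatorname{tr}A^2=\sum_v\deg v=2m$. Together with $\lambda_i^3\le|\lambda_i|\,\lambda_i^2$ and $\max_j\lambda_j^2\le\sum_j\lambda_j^2=2m$, this gives $6t(G)\le(2m)^{3/2}$.

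This is a genuinely different route from the paper, which does not prove the lemma but quotes the Lov\'asz form of Kruskal--Katona. For $m\ge1$, write $m=\binom x2$ with real $x\ge2$. Then a graph with $m$ edges has at most $\binom x3$ triangles, and $\binom x3=\tfrac16x(x-1)(x-2)\le\tfrac16\bigl(x(x-1)\bigr)^{3/2}=\tfrac16(2m)^{3/2}$ because $(x-2)^2\le x(x-1)$.

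Your approach buys a self-contained, essentially one-line proof of exactly the stated bound. The citation buys the sharper bound $\binom x3$ and, more to the point for this paper, the clique version $\#K_s(G)\le\binom xs$. That version is used again in the proof of the robust weighted clique selection theorem, and the trace argument does not supply it directly.

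In a write-up, drop your exploratory first half. The display with an unspecified ``small correction'' is not justified as written, and neither is the claimed bound $\sum_{uv\in E}\min\{\deg u,\deg v\}\le m\sqrt{2m}$. The spectral proof does not need either of them.
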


\subsection{Regularity tools}
In this section, we collect the regularity tools required for the proofs of the key lemmas.
Let $G$ be a graph and let $A,B\subseteq V(G)$ be disjoint and nonempty.
Write $d_G(A,B):=e_G(A,B)/(|A||B|)$. The pair $(A,B)$ is
$\eta$-regular in $G$ if
$|d_G(A',B')-d_G(A,B)|\le\eta$ whenever
$A'\subseteq A$, $B'\subseteq B$, $|A'|\ge\eta|A|$, and
$|B'|\ge\eta|B|$. We now state a version of Szemer\'edi regularity lemma for 
edge-colored complete graphs. 

\begin{lemma}[Multicolor regularity lemma]\label{lem:multicolor-regularity}
For every $\eta>0$ and integers $L,m_0\ge1$, there exist
$M,n_0\in\N$ such that the following holds. For every $L$-edge-colored complete graph $G$ on 
$n\ge n_0$ vertices, there is a partition
\[
 V=V_0\cup V_1\cup\cdots\cup V_m
\]
such that $m_0\le m\le M$, $|V_0|\le\eta n$, and
$|V_1|=\cdots=|V_m|$, and all but at most $\eta m^2$ pairs
$\{a,b\}\in\binom{[m]}2$ are $\eta$-regular simultaneously in every monochromatic  
$G_i$ for $i\in[L]$.
\end{lemma}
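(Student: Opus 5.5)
The statement is the multicolor regularity lemma, a standard variant of Szemer\'edi's regularity lemma. I would prove it by the usual energy-increment argument, applied to all $L$ colour classes simultaneously.

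\emph{Setup.} Write $G_1,\dots,G_L$ for the monochromatic graphs of the colouring; they partition $\binom{V}{2}$. For a partition $\cP=\{V_0,V_1,\dots,V_m\}$ with equal-sized non-exceptional parts, define the index
\[
 q(\cP):=\sum_{i=1}^{L}\sum_{1\le a<b\le m}\frac{|V_a||V_b|}{n^2}\,d_{G_i}(V_a,V_b)^2 .
\]
Each inner sum is at most $1/2$, so $q(\cP)\le L/2$.

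\emph{Refinement step.} Suppose more than $\eta m^2$ pairs $\{a,b\}$ fail to be $\eta$-regular in at least one $G_i$. By pigeonhole, some single colour $i$ has more than $\eta m^2/L$ irregular pairs. For each irregular pair, in whichever colour witnesses it, take the witnessing subsets $A'\subseteq V_a$ and $B'\subseteq V_b$. Refine every $V_a$ by the Venn diagram of all its witness sets; this gives at most $2^{m}$ pieces per part.

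The defect form of Cauchy--Schwarz shows that no colour's contribution to the index decreases under refinement. Moreover, each irregular pair in a given colour increases that colour's contribution by at least $\eta^4|V_a||V_b|/n^2$. Since $|V_a|\ge(1-\eta)n/m$, the total gain is at least roughly $\eta^5/(2L)$.

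We then re-equalize: chop each new piece into parts of a common size $\lfloor n/(m4^m)\rfloor$ and move the leftovers into $V_0$. As in the classical proof, this loses at most $\eta/2^{m}$ per round in exceptional size and only a negligible amount of index.

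\emph{Iteration and bounds.} Start from an arbitrary equipartition into $m_0$ parts, taking $m_0$ large enough that $1/m_0$ is negligible. The index increases by at least $\eta^5/(4L)$ in each round and is bounded by $L/2$. Hence the process stops after at most $2L^2\eta^{-5}$ rounds, at which point all but $\eta m^2$ pairs are regular in every colour. The bound $M$ is then the tower-type iterate of $m\mapsto m4^m$, and $n_0$ is chosen so that all parts remain large.

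\emph{Main obstacle.} The only delicate point is bookkeeping the exceptional set. Across all rounds we need $|V_0|\le\eta n$, which requires the per-round losses to sum to at most $\eta n$. This holds because the number of rounds is bounded independently of $n$ and each loss is tiny relative to $n$.

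Alternatively, the lemma can be deduced directly from the standard regularity lemma applied to a single graph. One either iterates over the $L$ colours, or applies the standard lemma with regularity parameter $\eta/L$ to $G_1$, and then to $G_2$ with the partition prescribed as an initial partition, and so on. The simpler route, however, is the simultaneous energy argument above.
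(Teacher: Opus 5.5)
Your simultaneous energy-increment argument is correct and is exactly what the paper intends: it only remarks that the usual proof runs on the sum of the $L$ colour energies. One small correction is that the per-round loss to $V_0$ is really of order $n/2^{m}$ rather than $\eta n/2^m$, which is harmless once $m_0$ is large.

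Drop the closing ``alternatively'' paragraph, however. Applying the single-graph lemma to $G_2$ with the $G_1$-partition as initial partition can cut the old parts into pieces much smaller than an $\eta$-fraction of them, and $\eta$-regularity in $G_1$ is not inherited by such pieces. You also cannot compensate by shrinking the $G_1$-parameter in advance, because the number of new pieces grows with the number of $G_1$-parts.
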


The usual energy-increment proof applies to the sum of the $L$ color
energies, so the multicolor statement follows from the ordinary regularity
lemma.

We repeatedly use the following two elementary consequences of regularity.

\begin{lemma}[Weighted cut estimate]\label{lem:regular-cut}
Let $(A,B)$ be an $\eta$-regular pair in a graph $G$, and put
$p:=d_G(A,B)$. Then, for all functions $f:A\to[0,1]$ and
$g:B\to[0,1]$,
\[
 \left|\sum_{x\in A}\sum_{y\in B}
 \bigl(\one_G(xy)-p\bigr)f(x)g(y)\right|
 \le2\eta|A||B|.
\]
\end{lemma}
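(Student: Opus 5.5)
The plan is to reduce to the case where $f$ and $g$ are indicator functions of sets, which is exactly what $\eta$-regularity controls. The link is a layer-cake decomposition. Write
\[
 f(x)=\int_0^1\one[f(x)\ge s]\dd s,\qquad g(y)=\int_0^1\one[g(y)\ge t]\dd t,
\]
and set $A_s:=\{x\in A:f(x)\ge s\}$ and $B_t:=\{y\in B:g(y)\ge t\}$. Interchanging the finite sums with the integrals gives
\[
 \sum_{x\in A}\sum_{y\in B}\bigl(\one_G(xy)-p\bigr)f(x)g(y)=\int_0^1\!\!\int_0^1 D(A_s,B_t)\dd s\dd t,
\]
where
\[
 D(A',B'):=\sum_{x\in A'}\sum_{y\in B'}\bigl(\one_G(xy)-p\bigr)=e_G(A',B')-p|A'||B'|.
\]
It therefore suffices to prove the pointwise bound $|D(A',B')|\le2\eta|A||B|$ for all $A'\subseteq A$ and $B'\subseteq B$. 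Integrating this over $[0,1]^2$ then gives the claimed inequality.

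For the pointwise bound we split into two cases.

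\emph{Large case.} Suppose $|A'|\ge\eta|A|$ and $|B'|\ge\eta|B|$. Then $\eta$-regularity gives $|d_G(A',B')-p|\le\eta$, so
\[
 |D(A',B')|=|A'||B'|\,|d_G(A',B')-p|\le\eta|A||B|.
\]

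\emph{Small case.} Suppose instead, say, $|A'|<\eta|A|$. Each summand $\one_G(xy)-p$ lies in $[-p,1-p]\subseteq[-1,1]$. Hence
\[
 |D(A',B')|\le|A'||B'|<\eta|A||B|.
\]
The case $|B'|<\eta|B|$ is symmetric.

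In both cases $|D(A',B')|\le\eta|A||B|\le2\eta|A||B|$, which completes the argument with room to spare.

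There is no serious obstacle here. The only point needing care is that the layer-cake sets $A_s,B_t$ are arbitrary subsets, possibly empty or tiny, and the small-set case handles these. If one prefers to avoid integrals, an alternative is multilinearity. The form is bilinear in $(f,g)$, so its extreme values over $[0,1]^A\times[0,1]^B$ are attained at vertices, that is, at $\{0,1\}$-valued $f$ and $g$, which again reduces to indicator functions of subsets. Either route gives the bound with constant $\eta$, and hence with $2\eta$ as stated.
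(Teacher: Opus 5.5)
Your proof is correct and follows the paper's argument: establish the bound for indicator functions of subsets $A'\subseteq A$, $B'\subseteq B$ by splitting into the regular (large) case and the trivial (small) case, then pass to general $f,g$ by integrating over level sets. Your small-case estimate $|D(A',B')|\le|A'||B'|$ is slightly tighter than the paper's, which only claims $2\eta|A||B|$ there, so you in fact obtain the constant $\eta$.
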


\begin{proof}
First take $f=\one_X$ and $g=\one_Y$. If
$|X|\ge\eta|A|$ and $|Y|\ge\eta|B|$, regularity gives the result with
error at most $\eta|A||B|$; if either set is smaller, the trivial bound is
at most $2\eta|A||B|$. The general case follows by integrating the
indicator functions of the level sets of $f$ and $g$.
\end{proof}

\begin{lemma}[Weighted triangle counting]\label{lem:regular-triangle}
Let $A_1,A_2,A_3$ be pairwise disjoint vertex sets. For
$ab\in\{12,13,23\}$, let $G_{ab}$ be a graph between $A_a$ and $A_b$,
and suppose that $(A_a,A_b)$ is $\eta$-regular in $G_{ab}$ with density
$p_{ab}$. Then, for all $f_i:A_i\to[0,1]$,
\[
\begin{split}
 \biggl|&\sum_{x_i\in A_i}\prod_{i=1}^3f_i(x_i)
        \prod_{ab\in\{12,13,23\}}\one_{G_{ab}}(x_a x_b)-p_{12}p_{13}p_{23}
          \prod_{i=1}^3\sum_{x\in A_i}f_i(x)\biggr|
 \le6\eta|A_1||A_2||A_3|.
\end{split}
\]
\end{lemma}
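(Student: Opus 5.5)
We prove the weighted triangle-counting estimate by telescoping, replacing the three indicator functions by their densities one at a time and applying \cref{lem:regular-cut} to each replacement. Write $\one_{ab}:=\one_{G_{ab}}$ and $F:=\prod_i f_i(x_i)$. We use the identity
\[
\one_{12}\one_{13}\one_{23}-p_{12}p_{13}p_{23}
=(\one_{12}-p_{12})\one_{13}\one_{23}
+p_{12}(\one_{13}-p_{13})\one_{23}
+p_{12}p_{13}(\one_{23}-p_{23}).
\]
We sum each of the three terms against $F$ over $A_1\times A_2\times A_3$ and show that each sum is at most $2\eta|A_1||A_2||A_3|$ in absolute value. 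This gives the total bound $6\eta|A_1||A_2||A_3|$.

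The third term is the simplest. We fix $x_1\in A_1$, which leaves the weight $p_{12}p_{13}f_1(x_1)\in[0,1]$ as a constant. The inner sum over $(x_2,x_3)$ then has the form $\sum(\one_{23}-p_{23})f_2(x_2)f_3(x_3)$. By \cref{lem:regular-cut} applied to the pair $(A_2,A_3)$, this is at most $2\eta|A_2||A_3|$. Summing over $x_1$ gives the bound $2\eta|A_1||A_2||A_3|$.

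For the second term we fix $x_2\in A_2$ and apply \cref{lem:regular-cut} to the pair $(A_1,A_3)$. We use the test functions $x_1\mapsto f_1(x_1)$ and $x_3\mapsto f_3(x_3)\one_{23}(x_2x_3)$, both of which take values in $[0,1]$, together with the constant $p_{12}f_2(x_2)\le 1$. This again gives $2\eta|A_1||A_3|$ for each $x_2$, and $2\eta|A_1||A_2||A_3|$ after summing.

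For the first term we fix $x_3\in A_3$ and apply \cref{lem:regular-cut} to the pair $(A_1,A_2)$. The test functions are $x_1\mapsto f_1(x_1)\one_{13}(x_1x_3)$ and $x_2\mapsto f_2(x_2)\one_{23}(x_2x_3)$, both $[0,1]$-valued, and the constant is $f_3(x_3)$.

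The only step needing care is checking that each leftover factor splits as a product of a function of one endpoint of the regular pair and a function of the other endpoint. This holds for every term once the third vertex is fixed. No genuine obstacle remains, since \cref{lem:regular-cut} already handles arbitrary $[0,1]$-weights.
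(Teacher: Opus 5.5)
Your proof is correct and is the same argument as the paper's, which also replaces the three indicators by their densities one at a time and, after fixing the remaining vertex, bounds each of the three error terms by $2\eta|A_1||A_2||A_3|$ via \cref{lem:regular-cut}. You have written out explicitly the telescoping identity and the $[0,1]$-valued test functions that the paper's two-line proof leaves implicit.
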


\begin{proof}
Replace the three edge indicators one at a time by their densities. After
fixing the remaining vertex, each resulting error is bounded by
\cref{lem:regular-cut} and is therefore at most
$2\eta|A_1||A_2||A_3|$.
\end{proof}

\subsection{Reductions for component colorings}

The next estimate allows us to discard component colors whose shadows have
vanishing density.

\begin{lemma}[Tail of the component coloring]\label{lem:tail-triangles}
Let $G_1,G_2,\ldots$ be pairwise edge-disjoint graphs on an $n$-vertex
set, listed so that $e(G_1)\ge e(G_2)\ge\cdots$. Then, for every $L\ge1$,
\[
 \sum_{i>L}\#K_3(G_i)=O\left(\frac{n^3}{\sqrt L}\right),
\]
where the implied constant is absolute.
\end{lemma}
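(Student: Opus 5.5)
The plan is to combine the Kruskal--Katona bound of \cref{lem:kk-triangle} with the ordering $e(G_1)\ge e(G_2)\ge\cdots$ and edge-disjointness. Write $m_i:=e(G_i)$. By \cref{lem:kk-triangle}, $\#K_3(G_i)\le (2m_i)^{3/2}/6$. So it suffices to show
\[
 \sum_{i>L} m_i^{3/2}=O\bigl(n^3/\sqrt L\bigr).
\]

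First I bound each tail term uniformly. The graphs are edge-disjoint on $n$ vertices, so $\sum_i m_i\le\binom n2\le n^2/2$. Because the sequence is nonincreasing, for every $i>L$ we have
\[
 L\,m_i\le m_1+\cdots+m_L\le n^2/2,
\]
hence $m_i\le n^2/(2L)$. Consequently $m_i^{1/2}\le n/\sqrt{2L}$ for all $i>L$.

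Next I factor each tail term as $m_i^{3/2}=m_i\cdot m_i^{1/2}$. Summing over $i>L$ gives
\[
 \sum_{i>L}m_i^{3/2}\le \frac{n}{\sqrt{2L}}\sum_{i>L}m_i\le \frac{n}{\sqrt{2L}}\cdot\frac{n^2}{2}.
\]
Multiplying by the Kruskal--Katona constant $2^{3/2}/6$ yields
\[
 \sum_{i>L}\#K_3(G_i)\le \frac{n^3}{6\sqrt L},
\]
with an absolute constant. If the family is infinite, all but finitely many $G_i$ are empty, so the sums are finite and the argument is unaffected.

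I do not expect a real obstacle here. The only point needing care is the uniform bound on $m_i$ for $i>L$: it follows from monotonicity combined with the total edge budget, and it is exactly what converts the superlinear $3/2$ power into a gain of $L^{-1/2}$.
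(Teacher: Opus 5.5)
Your proposal is correct and follows the paper's proof essentially verbatim: Kruskal--Katona, then $m_i^{3/2}\le\sqrt{m_{\text{tail}}}\cdot m_i$ with the tail bound $m_i\le\binom n2/L$ coming from monotonicity plus edge-disjointness. The paper uses $m_{L+1}\le\binom n2/(L+1)$ instead of your $n^2/(2L)$, which is only a cosmetic difference, and your explicit constant $1/6$ checks out.
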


\begin{proof}
Put $m_i:=e(G_i)$. By \cref{lem:kk-triangle},
\[
 \sum_{i>L}\#K_3(G_i)
 \le\frac{2^{3/2}}6\sum_{i>L}m_i^{3/2}
 \le\frac{2^{3/2}}6\sqrt{m_{L+1}}\sum_{i>L}m_i.
\]
Since $\sum_i m_i\le\binom n2$ and
$m_{L+1}\le\binom n2/(L+1)$, the result follows.
\end{proof}

For the codegree argument, every edge in a component shadow has many common
neighbors in the same shadow. The following lemma transfers this property
to the reduced graph. The diagonal cluster indices are included explicitly.

\begin{lemma}[Finite support transfer]\label{lem:finite-support-transfer}
Let $0<\eta<1/2$, $q\in[0,1]$, $\gamma>0$, and $\theta\ge0$.
Let $G$ be a graph on $n$ vertices such that every edge of $G$ has at
least $qn$ common neighbors in $G$. Let
$V(G)=V_0\cup V_1\cup\cdots\cup V_m$, where
$|V_0|\le\eta n$ and $|V_1|=\cdots=|V_m|$. For $a\ne b$, put
$w(ab):=d_G(V_a,V_b)$. Suppose that $w(ab)\ge\gamma$, the pair
$(V_a,V_b)$ is $\eta$-regular in $G$, and at most $\theta m$ indices
$c\in[m]\setminus\{a,b\}$ make either $(V_a,V_c)$ or $(V_b,V_c)$
nonregular. Then
\begin{equation}\label{eq:finite-support-transfer}
 \frac1m\sum_{c=1}^m w(ac)w(bc)
 \ge q-2\eta-\frac{6\eta+\theta+2/m}{\gamma}.
\end{equation}
The diagonal values $w(aa)$ and $w(bb)$ may be chosen arbitrarily in
$[0,1]$.
\end{lemma}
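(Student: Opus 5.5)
The plan is to double count the weighted number of triangles through the edges between $V_a$ and $V_b$. Write $N:=|V_a|=|V_b|=|V_c|$, so $mN\ge(1-\eta)n$. Define
\[
 T:=\sum_{x\in V_a}\sum_{y\in V_b}\one_G(xy)\,|N_G(x)\cap N_G(y)\cap (V\setminus V_0)|.
\]
This counts triples $(x,y,z)$ with $xy,xz,yz\in E(G)$ and $z$ outside $V_0$.

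\textbf{Lower bound on $T$.} Each edge $xy$ has at least $qn$ common neighbours, and at most $|V_0|\le\eta n$ of them lie in $V_0$. The number of edges between $V_a$ and $V_b$ is $w(ab)N^2$. Hence
\[
 T\ge w(ab)N^2(q-\eta)n\ge w(ab)N^2(q-\eta)mN .
\]

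\textbf{Upper bound on $T$.} Split $T=\sum_{c=1}^m T_c$ according to the cluster $V_c$ containing $z$.

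\begin{itemize}
\item[(i)] For $c\in\{a,b\}$, bound $T_c$ trivially by $N^3$. These two terms contribute at most $2N^3$.
\item[(ii)] For the at most $\theta m$ indices $c$ where $(V_a,V_c)$ or $(V_b,V_c)$ is nonregular, also bound $T_c\le N^3$. These contribute at most $\theta mN^3$.
\item[(iii)] For the remaining $c$, all three pairs are $\eta$-regular. Apply \cref{lem:regular-triangle} with $f_i\equiv1$ and $G_{ab},G_{ac},G_{bc}$ the bipartite subgraphs of $G$. This gives $T_c\le \bigl(w(ab)w(ac)w(bc)+6\eta\bigr)N^3$.
\end{itemize}

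In case (iii), extend the sum of the main terms over all $c$ by nonnegativity. The diagonal terms $c\in\{a,b\}$ have nonnegative weight whatever the diagonal values $w(aa),w(bb)\in[0,1]$ are, so including them only increases the sum. This is why the diagonal choice is arbitrary.

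\textbf{Combining.} We get
\[
 T\le N^3\Bigl(w(ab)\sum_{c}w(ac)w(bc)+6\eta m+\theta m+2\Bigr).
\]
Compare this with the lower bound and divide by $w(ab)mN^3$, using $w(ab)\ge\gamma$ to bound the error terms divided by $w(ab)$. This yields
\[
 \frac1m\sum_c w(ac)w(bc)\ge q-\eta-\frac{6\eta+\theta+2/m}{\gamma}.
\]
This is even slightly stronger than \eqref{eq:finite-support-transfer}. The extra $-\eta$ in the statement absorbs any slack, for example from using $n\ge mN$ rather than an exact identity.

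\textbf{Main obstacle.} The only delicate point is the bookkeeping between $n$ and $mN$. The inequality $n\ge mN$ is the direction needed in the lower bound step, since we write $qn\ge q\,mN$, and it holds. I would double-check that the normalization in \cref{lem:regular-triangle} matches, with error $6\eta N^3$ per cluster. Everything else is routine.
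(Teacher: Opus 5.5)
Your proposal is correct and is essentially the paper's proof: the same double count of triangles through $V_a\times V_b$, with $V_0$, $V_a\cup V_b$ and the nonregular clusters bounded trivially and the regular clusters handled by the weighted triangle counting lemma. You subtract $|V_0|\le\eta n$ directly from $qn$, which gives a loss of $\eta$ instead of the paper's $\lambda=|V_0|/(ms)\le 2\eta$. One caveat: the step $(q-\eta)n\ge(q-\eta)mN$ needs $q\ge\eta$, but when $q<\eta$ the right-hand side of the claimed inequality is negative and the statement is trivial.
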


\begin{proof}
Let $s:=|V_1|$ and $p:=w(ab)$. Count triples $(x,y,z)$ such that
$x\in V_a$, $y\in V_b$, $xy\in E(G)$, and $z$ is a common neighbor of
$x$ and $y$ in $G$. The hypothesis gives at least $qnp s^2$ such triples.
Those with $z\in V_0$ or $z\in V_a\cup V_b$ contribute at most
$|V_0|ps^2+2s^3$, and the nonregular indices contribute at most
$\theta ms^3$. For every remaining index, \cref{lem:regular-triangle}
gives the upper bound
$\bigl(pw(ac)w(bc)+6\eta\bigr)s^3$. Hence
\[
 qnp s^2\le |V_0|ps^2+2s^3
 +ps^3\sum_{c=1}^m w(ac)w(bc)+(6\eta+\theta)ms^3.
\]
Divide by $pms^3$ and put $\lambda:=|V_0|/(ms)$. Since
$\lambda\le\eta/(1-\eta)\le2\eta$ and $p\ge\gamma$, we obtain
\eqref{eq:finite-support-transfer}.
\end{proof}
\section{Sharpness constructions}\label{sec:constructions}

We prove the sharpness of \cref{thm:main-vexdeg-curve} and then establish
\cref{thm:codegree-counterexample}. Both constructions are probabilistic,
but their obstructions to Hamiltonicity are deterministic.

We use the following concentration inequalities. Part~\ref{item-ineq1} is McDiarmid's bounded-differences inequality
\cite{McDiarmid1989}, and part~\ref{item-ineq2} is the additive Chernoff
bound.

\begin{lemma}\label{lem:concentration}
Let $\xi_1,\ldots,\xi_m$ be independent random variables.
\begin{enumerate}[label=\textup{(\alph*)},leftmargin=2.4em]
 \item\label{item-ineq1} Suppose that $Z=f(\xi_1,\ldots,\xi_m)$ and that
 changing only $\xi_i$ changes $Z$ by at most $c_i$. Then, for every
 $\lambda>0$,
 \[
  \Prob(Z\le\E Z-\lambda)
  \le\exp\left(-\frac{2\lambda^2}{\sum_{i=1}^m c_i^2}\right).
 \]
 \item\label{item-ineq2} If $X\sim\Bin(N,p)$, then, for every
 $0\le\lambda\le Np$,
 \[
  \Prob(X\le Np-\lambda)
  \le\exp\left(-\frac{\lambda^2}{2Np}\right).
 \]
\end{enumerate}
\end{lemma}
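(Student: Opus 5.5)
The statement collects two classical inequalities, so the plan is to derive both from the exponential moment method. I would not try to optimize constants; the forms stated (with $2\lambda^2$ in \ref{item-ineq1} and $\lambda^2/(2Np)$ in \ref{item-ineq2}) are exactly what the standard arguments yield.

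For \ref{item-ineq1}, I would use the Doob martingale. Set $Z_i:=\E[Z\mid \xi_1,\ldots,\xi_i]$, so $Z_0=\E Z$ and $Z_m=Z$, and let $D_i:=Z_i-Z_{i-1}$. Independence of the $\xi_j$ lets one write $Z_i$ as an integral of $f$ over $\xi_{i+1},\ldots,\xi_m$ with $\xi_1,\ldots,\xi_i$ fixed. This shows that, conditionally on $\xi_1,\ldots,\xi_{i-1}$, the increment $D_i$ ranges over an interval of length at most $c_i$: its supremum and infimum over the value of $\xi_i$ differ by at most $c_i$, by the bounded-differences hypothesis. This step is where independence is genuinely used, and it is the only nontrivial point: one must bound the \emph{range} of $D_i$ by $c_i$, not merely $|D_i|\le c_i$. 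Otherwise one loses a factor $4$ in the exponent.

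Hoeffding's lemma then gives
\[
\E\bigl[e^{-tD_i}\mid \xi_1,\ldots,\xi_{i-1}\bigr]\le e^{t^2c_i^2/8}
\]
for every $t>0$, since $D_i$ has conditional mean zero. Iterating via the tower property yields
\[
\E e^{-t(Z-\E Z)}\le \exp\Bigl(t^2\sum_i c_i^2/8\Bigr).
\]
Markov's inequality applied to $e^{-t(Z-\E Z)}$, with the choice $t=4\lambda/\sum_i c_i^2$, gives the bound $\exp\bigl(-2\lambda^2/\sum_i c_i^2\bigr)$.

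For \ref{item-ineq2}, I would write $X$ as a sum of $N$ independent Bernoulli$(p)$ variables and bound the lower-tail moment generating function. For $t>0$,
\[
\E e^{-tX}=(1-p+pe^{-t})^N\le \exp\bigl(Np(e^{-t}-1)\bigr).
\]
Markov's inequality then gives
\[
\Prob(X\le Np-\lambda)\le\exp\bigl(Np(e^{-t}-1)+t(Np-\lambda)\bigr).
\]
Writing $\lambda=\delta Np$ with $\delta\in[0,1]$ and optimizing at $t=-\ln(1-\delta)$, the exponent becomes
\[
-Np\bigl((1-\delta)\ln(1-\delta)+\delta\bigr).
\]
The remaining task is the elementary inequality $(1-\delta)\ln(1-\delta)+\delta\ge\delta^2/2$ on $[0,1)$, with the endpoint $\delta=1$ handled by continuity. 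This holds because both sides vanish at $0$ and the derivative of the left side, $-\ln(1-\delta)$, is at least $\delta$. The result is $\exp\bigl(-\delta^2Np/2\bigr)=\exp\bigl(-\lambda^2/(2Np)\bigr)$.

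The only step requiring genuine care is the range bound on the martingale increments in \ref{item-ineq1}. Everything else is routine calculus.
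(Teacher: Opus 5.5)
Your proposal is correct: the paper gives no proof of this lemma and simply cites McDiarmid's bounded-differences inequality and the additive Chernoff bound, and your arguments are exactly the standard proofs behind those citations. In (a) you use the Doob martingale with Hoeffding's lemma applied to increments whose conditional range is at most $c_i$; in (b) you use the optimized exponential-moment bound together with $(1-\delta)\ln(1-\delta)+\delta\ge\delta^2/2$. The constants check out, including the choice $t=4\lambda/\sum_i c_i^2$ and the endpoint case $\lambda=Np$.
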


\subsection{The vertex-degree obstruction}\label{subsec:const-1}

For $\theta\in[1/2,1]$, define
\begin{equation}\label{eq:theta-parameters}
 d_\theta:=\theta^3+(1-\theta)^3,
 \qquad
 \alpha_\theta:=\min\{1-\theta,d_\theta\}.
\end{equation}

\begin{lemma}[Vertex-degree obstruction]\label{prop:vertex-obstruction}
For every $\theta\in[1/2,1]$ and every $\eps,\mu>0$, there exists
$n_0\in\N$ such that, for every $n\ge n_0$, there is an
$(n,d_\theta,\mu)$-dense $3$-graph $H$ satisfying
$\delta_1(H)\ge(\alpha_\theta-\eps)\binom{n-1}{2}$
and containing no tight Hamilton cycle. In particular, for every
$d\in(1/3,1]$, taking $\theta=\rho(d)$ gives $d_\theta=d$ and
$\alpha_\theta=f(d)$, proving the asymptotic sharpness of
\cref{thm:main-vexdeg-curve}. Taking $\theta=2/3$ gives the diagonal
point $(1/3,1/3)$ in \cref{coro:main-vexdeg}.
\end{lemma}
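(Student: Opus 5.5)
We take the construction from \cite{AraujoPigaSchacht2022,HanShuWang2021}. Partition $V=A\cup B$ with $|A|=\lfloor\theta n\rfloor$, $|B|=n-|A|$. Colour each pair independently and uniformly red or blue. We put in $E(H)$ every triple $xyz$ with all three pairs red and an even number of vertices in $B$ (types $AAA$, $ABB$). We also put in every triple with all three pairs blue and an odd number of vertices in $B$ (types $AAB$, $BBB$).

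The first step is the deterministic obstruction. Each pair lies only in edges of its own colour, so tightly adjacent edges have the same colour, and every tight component is monochromatic. In a tight Hamilton cycle all edges lie in one tight component, so all $n$ consecutive pairs $v_iv_{i+1}$ have one colour. Consider a red cycle. Every edge has an even number of vertices in $B$. Consecutive edges $v_{i}v_{i+1}v_{i+2}$ and $v_{i+1}v_{i+2}v_{i+3}$ then force $v_i$ and $v_{i+3}$ to lie on the same side. Hence membership in $B$ is $3$-periodic along the cycle. If $3\nmid n$, the period $\gcd(3,n)=1$ makes everything lie on one side, which is impossible since $A,B\neq\emptyset$ and $A$ alone has parity $AAA$. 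We therefore strengthen the construction to cover all $n$.

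Instead, we count: with period $3$, the residue classes contribute $0$ or $2$ classes to $B$, so $|B|\in\{0,2n/3\}$ or the pattern is inconsistent. Since $|B|=(1-\theta)n$, this pattern is possible only for $\theta=1/3$, which is excluded for $\theta\ge1/2$ up to small perturbation. The blue case (odd parity) gives $|B|\in\{n/3,n\}$, which is likewise impossible unless $\theta=2/3$. To avoid equality, we adjust $|A|$ by one so that $|B|\neq n/3$ exactly; when $3\nmid n$ the periodicity already kills it. This adjustment changes densities only by $o(1)$.

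The second step is uniform density, via McDiarmid (\cref{lem:concentration}\ref{item-ineq1}). For fixed $X,Y,Z$, we have $\E e_H(X,Y,Z)=\frac18\sum\mathbf 1[\text{type allowed}]$ over triples. The density of allowed types over random triples is $\theta^3+3\theta(1-\theta)^2+3\theta^2(1-\theta)+(1-\theta)^3=1$. However, $\E$ over a box is $\frac18\sum$ over triples, which is wrong for matching $d_\theta$. We therefore reinterpret: the intended construction colours pairs deterministically by position (pairs inside $A$ or inside $B$ red, crossing pairs blue) and adds randomness only for density. In that reading the edges are $AAA$, $BBB$ (red, all pairs inside) plus random triples, so the density $\theta^3+(1-\theta)^3=d_\theta$ comes from the $AAA\cup BBB$ part, with box discrepancy controlled since these are products of indicator functions. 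Minimum degree is $\min\{\theta^2,(1-\theta)^2\}$ from this part, plus crossing triples added with probability $p$ chosen to raise the minimum degree to $\alpha_\theta$. These crossing triples lie in a separate tight component, because their pairs are crossing or are placed in a different component via the parity trick. Then $e(X,Y,Z)\ge\sum$ over the $AAA,BBB$ parts $=d_\theta|X||Y||Z|-o(n^3)$ deterministically, and Chernoff (\cref{lem:concentration}\ref{item-ineq2}) with a union bound gives the degree $\ge(\alpha_\theta-\eps)\binom{n-1}2$. The main obstacle is designing the auxiliary random edges so that vertices in $A$ gain degree up to $1-\theta$ while every Hamilton-cycle candidate still fails by space/parity; I expect this to need the red/blue parity construction from the first step applied to crossing pairs, and the verification of that parity obstruction is the delicate step.
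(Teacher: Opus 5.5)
There is a genuine gap: none of your constructions is simultaneously $(d_\theta,\mu)$-dense, has minimum vertex degree $(\alpha_\theta-\eps)\binom{n-1}{2}$, and has no tight Hamilton cycle. In your parity construction each triple type admits exactly one colour pattern, so every triple is an edge with probability $1/8$. With any colour bias $p$, the types $AAA$ and $AAB$ get probabilities $p^3$ and $(1-p)^3$, and one of these is at most $1/8<1/4\le d_\theta$. A box of that type then already violates the density condition, so no scheme in which the allowed colour depends on the position of the triple can work.

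Your fallback claim, that $AAA\cup BBB$ satisfies $e(X,Y,Z)\ge d_\theta|X||Y||Z|-o(n^3)$ ``since these are products of indicator functions'', is false. For $X=Y=A$ and $Z=B$ it has no edges at all, while $d_\theta|A|^2|B|=\Theta(n^3)$. Your ``auxiliary'' crossing triples would therefore have to supply density $d_\theta$ on every mixed box by themselves, while still not creating a Hamiltonian tight component. That is the whole construction, together with its obstruction and its degree count, and you leave it undesigned.

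The missing idea is to put the randomness on a $\theta$-biased colouring of pairs and make \emph{every} monochromatic triangle an edge, whatever its position. Then each triple of the random part is an edge with probability exactly $\theta^3+(1-\theta)^3=d_\theta$. McDiarmid's inequality plus a union bound over the $8^n$ boxes gives $(d_\theta,\mu)$-density.

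The obstruction is then not a parity or space barrier but a connectivity one, coming from two extra vertices:
\begin{itemize}
\item take all monochromatic triangles on a set $U$ of size $n-2$;
\item add a vertex $x$ joined to every red pair of $U$, and a vertex $y$ joined to every blue pair;
\item put no edge through $xy$.
\end{itemize}
Red and blue edges share no pair, so they lie in different tight components. A red tight cycle misses $y$, and a blue one misses $x$.

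The degrees are about $\theta\binom{n-1}{2}$ for $x$, about $(1-\theta)\binom{n-1}{2}$ for $y$, and about $d_\theta\binom{n-1}{2}$ for vertices of $U$, by Chernoff and McDiarmid. This is exactly where $\alpha_\theta=\min\{1-\theta,d_\theta\}$ comes from. Nothing in your scheme accounts for the term $1-\theta$.
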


\begin{proof}
Let $U$ be a set of size $N:=n-2$. Color each pair in $\binom U2$
independently red with probability $\theta$ and blue otherwise. Add two
vertices $x$ and $y$, and define a $3$-graph $H$ on
$U\cup\{x,y\}$ as follows:
\begin{itemize}[leftmargin=2em]
 \item a triple contained in $U$ is an edge if its three pairs have the
 same color;
 \item $xuv$ is an edge if $uv$ is red, and $yuv$ is an edge if $uv$ is
 blue;
 \item no edge contains the pair $xy$.
\end{itemize}

Partition $E(H)$ into the red class, consisting of the all-red triples in
$U$ and the edges through $x$, and the blue class, defined analogously.
No red edge is tightly adjacent to a blue edge: a shared pair in $U$ would
have to receive both colors, while red edges avoid $y$ and blue edges avoid
$x$. Thus every tight cycle lies in one class. A red tight cycle avoids
$y$, and a blue tight cycle avoids $x$, so $H$ has no tight Hamilton cycle.

We next verify uniform density. Fix $X,Y,Z\subseteq V(H)$ and set
$D:=e_H(X,Y,Z)$. All but at most $12n^2$ ordered triples in
$X\times Y\times Z$ have three distinct coordinates in $U$, and each such
triple is an edge with probability $d_\theta$. Hence
\begin{equation}\label{eq:v-obstruction-expectation}
 \E D\ge d_\theta|X||Y||Z|-12n^2.
\end{equation}
Changing one pair color affects at most $n$ hyperedges, so it changes $D$
by at most $6n$. Since there are fewer than $n^2/2$ independent pair
colors, part~\ref{item-ineq1} of \cref{lem:concentration} gives
\[
 \Prob\left(D<\E D-\frac{\mu n^3}{2}\right)
 \le\exp\left(-\frac{\mu^2n^2}{36}\right).
\]
There are $8^n$ ordered choices of $(X,Y,Z)$. The union bound and
\eqref{eq:v-obstruction-expectation} therefore show that, with probability
$1-o(1)$,
\[
 e_H(X,Y,Z)\ge d_\theta|X||Y||Z|-\mu n^3
\]
simultaneously for all $X,Y,Z\subseteq V(H)$, provided that $n$ is
sufficiently large.

It remains to estimate the minimum vertex degree. Chernoff's inequality
gives, with probability $1-o(1)$,
\[
 \deg_H(x)\ge\left(\theta-\frac\eps{10}\right)\binom N2,
 \qquad
 \deg_H(y)\ge\left(1-\theta-\frac\eps{10}\right)\binom N2.
\]
For $v\in U$, let $T_v$ be the number of monochromatic triples in
$\binom U3$ containing $v$. Then
$\E T_v=d_\theta\binom{N-1}{2}$. Changing a pair incident with $v$
changes $T_v$ by at most $N-2$, and changing any other pair changes it by
at most one. Another application of part~\ref{item-ineq1} of
\cref{lem:concentration}, followed by a union bound over
$v\in U$, gives, with probability $1-o(1)$,
\[
 T_v\ge d_\theta\binom{N-1}{2}-\frac\eps{10}n^2
 \qquad\text{for every }v\in U.
\]
Moreover, for every $u\in U\setminus\{v\}$, exactly one of $xuv$ and
$yuv$ is an edge, so $\deg_H(v)=T_v+N-1$. Since $\theta\ge1-\theta$,
these estimates imply, for all sufficiently large $n$,
\[
 \delta_1(H)\ge
 \bigl(\min\{1-\theta,d_\theta\}-\eps\bigr)\binom{n-1}{2}
 =(\alpha_\theta-\eps)\binom{n-1}{2}
\]
with probability $1-o(1)$. The density and degree events therefore hold
simultaneously for at least one coloring.
\end{proof}

\subsection{The codegree obstruction}\label{subsec:const-2}

For $\theta\in(0,1/3)$, define
\begin{equation}\label{eq:codegree-obstruction-parameter}
 \beta_\theta:=\min\{\theta,(1-\theta)^3\}.
\end{equation}

\begin{lemma}[Codegree obstruction]\label{prop:codegree-obstruction}
For every $\theta\in(0,1/3)$ and every $\eps,\mu>0$, there exists
$n_0\in\N$ such that, for every $n\ge n_0$, there is an
$(n,\beta_\theta,\mu)$-dense $3$-graph $H$ satisfying
$\delta_2(H)\ge(\beta_\theta-\eps)n$ and containing no tight Hamilton
cycle. In particular, taking $\theta=\kappa$ proves
\cref{thm:codegree-counterexample}.
\end{lemma}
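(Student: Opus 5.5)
The plan is to combine a random two-colouring of pairs, as in \cref{prop:vertex-obstruction}, with a space barrier. Fix $\theta\in(0,1/3)$, let $A$ be a set of $\lfloor\theta n\rfloor$ vertices and $B$ the remaining $n-\lfloor\theta n\rfloor$ vertices. Colour each pair in $\binom B2$ independently red with probability $\theta$ and blue otherwise, and define $H$ by:
\begin{itemize}[leftmargin=2em]
 \item every triple meeting $A$ in at least two vertices is an edge;
 \item a triple $abb'$ with $a\in A$ and $b,b'\in B$ is an edge if $bb'$ is red;
 \item a triple inside $B$ is an edge if all three of its pairs are blue.
\end{itemize}
Call the edges meeting $A$ the red class and the all-blue triples in $B$ the blue class.

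The first step is the deterministic obstruction. A red edge and a blue edge cannot be tightly adjacent. Indeed, a shared pair would lie in $B$, because blue edges avoid $A$. That pair is blue as a side of the blue edge. As a pair in $B$ of a red edge, it is red, since a red edge has at most two vertices in $B$. Hence every tight cycle lies in one class. A blue tight Hamilton cycle would avoid $A$, which is impossible. In a red tight Hamilton cycle, every three cyclically consecutive vertices form an edge and so contain a vertex of $A$. The $n$ cyclic windows would then need $|A|\ge n/3$, contradicting $|A|\le\theta n<n/3$.

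The second step is uniform density. For an ordered triple $(x,y,z)$ of distinct vertices, the probability of being an edge depends only on how many coordinates lie in $A$:
\begin{itemize}[leftmargin=2em]
 \item it is $(1-\theta)^3$ if all three lie in $B$;
 \item it is $\theta$ if exactly one lies in $A$;
 \item it is $1$ otherwise.
\end{itemize}
So each such triple is an edge with probability at least $\beta_\theta$, and therefore $\E e_H(X,Y,Z)\ge\beta_\theta|X||Y||Z|-O(n^2)$. Recolouring one pair changes $e_H(X,Y,Z)$ by $O(n)$. McDiarmid's inequality, \cref{lem:concentration}\ref{item-ineq1}, together with a union bound over the $8^n$ choices of $(X,Y,Z)$, then gives $(n,\beta_\theta,\mu)$-density with high probability, exactly as in \cref{prop:vertex-obstruction}.

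The third step is the codegree bound, checked by pair type with Chernoff's bound \cref{lem:concentration}\ref{item-ineq2} and a union bound over pairs.
\begin{itemize}[leftmargin=2em]
 \item A pair inside $A$ has degree $n-2$.
 \item A pair $ab$ with $a\in A$, $b\in B$ has all of $A\setminus\{a\}$ as neighbours, so its degree is at least $\theta n-2$.
 \item A red pair in $B$ has exactly $A$ as its neighbourhood, of size about $\theta n$.
 \item A blue pair $bb'$ in $B$ has as neighbours the vertices $c\in B$ with $bc,b'c$ both blue. There are about $(1-\theta)^2|B|\approx(1-\theta)^3n$ of these.
\end{itemize}
Hence $\delta_2(H)\ge(\beta_\theta-\eps)n$ with high probability, and a colouring satisfying both events exists. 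Taking $\theta=\kappa$ gives $\beta_\kappa=\kappa$, which is \cref{thm:codegree-counterexample}.

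I expect the main point of care to be the choice of red probability equal to $\theta=|A|/n$. This single choice balances the codegree of blue pairs, which is $(1-\theta)^3n$, against that of red pairs, which is $\theta n$. It also keeps the uniform density at least $\min\{\theta,(1-\theta)^3\}$ on every box, including boxes inside $B$. The remaining steps are routine concentration arguments.
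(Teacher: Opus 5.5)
Your proposal is correct and follows the paper's proof: the construction is the same up to relabeling (your small set $A$ is the paper's $B$, and your red pairs are the edges of the paper's random graph $G$ on the large side). The three steps also match the paper's: tight separation of the two edge classes plus the space barrier $|A|<n/3$, McDiarmid's inequality with a union bound for uniform density, and a case check with Chernoff's bound for the codegrees.
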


\begin{proof}
Partition an $n$-vertex set as $V=A\cup B$, where
$|B|=\lfloor\theta n\rfloor$ and $|A|=n-|B|$. Choose a random graph $G$
on $A$ by including each pair independently with probability $\theta$.
Define a random $3$-graph $H$ on $V$ as follows:
\begin{itemize}[leftmargin=2em]
 \item a triple contained in $A$ is an edge if none of its three pairs
 belongs to $E(G)$;
 \item $xyw$ is an edge for $x,y\in A$ and $w\in B$ if $xy\in E(G)$;
 \item every triple containing at least two vertices of $B$ is an edge.
\end{itemize}

Let $\mathcal E_A:=E(H)\cap\binom A3$ and
$\mathcal E_B:=E(H)\setminus\mathcal E_A$. No edge in $\mathcal E_A$ is
tightly adjacent to an edge in $\mathcal E_B$: a shared pair would lie in
$A$, where it would have to be both a nonedge and an edge of $G$. Hence
every tight cycle lies in one class. A cycle in $\mathcal E_A$ avoids
$B$. In a cycle in $\mathcal E_B$, every three consecutive vertices
contain a vertex of $B$, so each gap between consecutive vertices of $B$
contains at most two vertices of $A$. A spanning cycle would therefore
imply $|A|\le2|B|$, contrary to $\theta<1/3$. Thus $H$ contains no tight
Hamilton cycle.

Fix $X,Y,Z\subseteq V(H)$ and let $D:=e_H(X,Y,Z)$. At most $3n^2$
ordered triples in $X\times Y\times Z$ have a repeated coordinate. A
triple with distinct coordinates is an edge with probability
$(1-\theta)^3$ if it lies in $A$, with probability $\theta$ if it has
exactly two vertices in $A$, and with probability one if it has at least
two vertices in $B$. Consequently,
\begin{equation}\label{eq:codegree-obstruction-expectation}
 \E D\ge\beta_\theta|X||Y||Z|-3n^2.
\end{equation}
Changing one edge indicator of $G$ changes $D$ by at most $6n$.
McDiarmid's inequality and a union bound over the $8^n$ choices of
$(X,Y,Z)$ therefore show that, with probability $1-o(1)$,
\[
 e_H(X,Y,Z)\ge\beta_\theta|X||Y||Z|-\mu n^3
\]
simultaneously for all $X,Y,Z\subseteq V(H)$.

We now estimate the codegrees. Pairs contained in $B$ have codegree
$n-2$. If $x\in A$ and $y\in B$, then
$\deg_H(xy)=|B|-1+\deg_G(x)\ge|B|-1$. If $x,y\in A$ and
$xy\in E(G)$, then $\deg_H(xy)=|B|$. Thus these pairs have codegree at
least $(\theta-\eps)n$ for sufficiently large $n$.

Suppose that $x,y\in A$ and $xy\notin E(G)$. Conditional on this event,
$z\in A\setminus\{x,y\}$ completes $xy$ to an edge precisely when both
$xz$ and $yz$ are nonedges of $G$. Hence
\[
 \deg_H(xy)\sim\Bin\bigl(|A|-2,(1-\theta)^2\bigr).
\]
For sufficiently large $n$, its expectation is at least
$((1-\theta)^3-\eps/2)n$. By Chernoff's inequality and a union bound over
$\binom A2$, with probability $1-o(1)$,
\[
 \deg_H(xy)\ge\bigl((1-\theta)^3-\eps\bigr)n
\]
simultaneously for every such pair. Therefore, with probability $1-o(1)$,
\[
 \delta_2(H)\ge(\beta_\theta-\eps)n.
\]
At least one realization satisfies this bound and the uniform-density
condition.

It remains to optimize the parameter. The functions $\theta$ and
$(1-\theta)^3$ are respectively increasing and decreasing, so
$\beta_\theta$ is maximized at their unique point of intersection,
namely $\theta=\kappa$. More generally, if
$|B|=(b+o(1))n$ and the auxiliary graph on $A$ has edge probability
$q$, then the density bottlenecks are $q$ and $(1-q)^3$, while the
codegree bottlenecks are $b$ and $(1-q)^2(1-b)$. Subject to $b<1/3$,
\[
 \max_{\substack{0<b<1/3\\0<q<1}}
 \min\bigl\{q,(1-q)^3,b,(1-q)^2(1-b)\bigr\}=\kappa.
\]
Indeed, the minimum is at most $\min\{q,(1-q)^3\}\le\kappa$, and the
choice $b=q=\kappa$ makes all four terms equal to $\kappa$.
\end{proof}
\section{The Hamilton-framework reduction}\label{sec:framework}

In this section, we reduce
\cref{thm:main-vexdeg-curve,thm:main-codeg} to
\cref{lem:v-dominant,lem:codeg-dominant,lem:shadow-consistency}.

\subsection{Structural lemmas}

Recall from \eqref{eq:vertex-boundary}--\eqref{eq:rho-identity} that, for
$d>1/3$, we have $\rho(d)>2/3$, $f(d)=1-\rho(d)$, and
$d=\rho(d)^3+(1-\rho(d))^3$.

\begin{lemma}[Vertex-degree dominant component]\label{lem:v-dominant}
Let $d\in(1/3,1]$ and $\alpha>f(d)$, and put $\rho:=\rho(d)$. For every
$\eps>0$, there exist $\mu>0$ and $n_0\in\N$ such that every
$(n,d,\mu)$-dense $3$-graph $H$ with $n\ge n_0$ and
$\delta_1(H)\ge\alpha\binom{n-1}{2}$ contains a spanning tight component
$C$ satisfying
\begin{enumerate}[label=\textup{(\roman*)},leftmargin=2.4em]
 \item\label{item:p1} $e(\partial C)\ge(\rho-\eps)\binom n2$;
 \item\label{item:p2} for all $X,Y,Z\subseteq V(H)$,
 $e_C(X,Y,Z)\ge(\rho^3-\eps)|X||Y||Z|-\eps n^3$;
 \item\label{item:p3}
 $\delta_1(C)\ge(\alpha+\rho-1-\eps)\binom{n-1}{2}$.
\end{enumerate}
\end{lemma}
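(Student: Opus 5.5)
We follow the outline sketched in the introduction: colour each pair of $\partial H$ by the tight component containing it, reduce to finitely many colours, and then regularize. Because distinct tight components have edge-disjoint shadows, the graphs $G_i:=\partial C_i$ are pairwise edge-disjoint. Every edge $xyz$ of $H$ is a monochromatic triangle in the colour of its component. Uniform density gives $e(H)\ge(d-o(1))\binom n3$, and more strongly $e_H(X,Y,Z)\ge d|X||Y||Z|-\mu n^3$ for all boxes. By \cref{lem:tail-triangles} with $L$ large in terms of $\eps$, the components beyond the $L$ largest shadows carry at most $O(n^3/\sqrt L)$ edges. We therefore keep only $G_1,\dots,G_L$, place every remaining pair (including non-shadow pairs) in a junk colour $0$, and apply \cref{lem:multicolor-regularity} to this $(L+1)$-colouring.

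This yields a reduced graph on $[m]$ with weights $w_i(ab)=d_{G_i}(V_a,V_b)$ and $\sum_i w_i(ab)\le1$. We must translate the density of $H$ on boxes $V_a\times V_b\times V_c$ into a triangle condition. For a regular triple $a,b,c$, the number of edges of $H$ inside $V_a\times V_b\times V_c$ is at least $(d-o(1))|V_a||V_b||V_c|$. Up to the tail error, all of these edges are monochromatic triangles of some $G_i$ with $i\le L$. By \cref{lem:regular-triangle}, this gives $\sum_{i\ge1} w_i(ab)w_i(bc)w_i(ac)\ge d-\eps'$. We also need this for degenerate triples (two indices equal), which we handle by splitting clusters or simply ignoring them, since their proportion is $O(1/m)$.

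Now comes the key step, which is also the main obstacle: a weighted monochromatic-clique statement. Suppose weights on a complete graph satisfy $\sum_i w_i\le1$ on every pair and $\sum_i w_i(ab)w_i(bc)w_i(ca)\ge d$ on almost every triangle, where $d=\rho^3+(1-\rho)^3>1/3$. We want to show that some single colour $i^*$ has $w_{i^*}(ab)\ge\rho-\eps$ on almost every pair. My plan is as follows. Write $t=\rho$. On a single triangle, with pair weight vectors summing to at most $1$, maximizing $\sum_i xyz$ forces one colour to have weight at least about $\rho$ on each pair of the triangle. Indeed, if every colour had weight below $\rho$ on some pair, a convexity/majorization argument shows $\sum_i x_iy_iz_i\le\rho^3+(1-\rho)^3$ with equality only near two colours of weights $\rho,1-\rho$. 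Hence almost every triangle has a colour exceeding roughly $\rho>2/3$ on all three of its pairs. Since $\rho>1/2$, such a colour is unique per pair, so each good pair has a well-defined dominant colour. Every good triangle is then monochromatic in the dominant colouring, and a graph in which almost all triangles are monochromatic must be almost monochromatic globally (via a robust connectivity/removal argument). I expect the robust version of this to be the delicate part, namely the stability near the two-colour extremal configuration together with handling the exceptional pairs.

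Transferring back, the colour $i^*$ corresponds to one component $C$. Property \ref{item:p1} follows because $w_{i^*}\ge\rho-\eps$ on almost all reduced pairs. Property \ref{item:p2} follows by counting triangles of $G_{i^*}$ inside boxes via \cref{lem:regular-cut}/\cref{lem:regular-triangle}. The count gives triangles in $\partial C$, not edges of $C$, so we must also use the density of $H$: a box triangle in $\partial C$ lies in $H$ with the right frequency because the edges of $H$ among $\partial C$-triangles exhaust almost all of the weight $\rho^3$ forced above. For property \ref{item:p3}, a vertex $v$ has at least $\alpha\binom{n-1}2$ neighbouring pairs. All but $(1-\rho+\eps)\binom n2$ pairs lie in $\partial C$, and an edge $vxy$ with $xy\in\partial C$ belongs to $C$. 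This gives $\deg_C(v)\ge(\alpha+\rho-1-\eps)\binom{n-1}2>0$, so $C$ is also spanning.
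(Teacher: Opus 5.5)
\textbf{There is a genuine gap in the key step.} Your architecture matches the paper: the component colouring, removing the tail via \cref{lem:tail-triangles}, multicolour regularity, the reduced bound $M(a,b,c)\ge d-o(1)$ on almost all cluster triples, and your argument for \ref{item:p3}. But the key step rests on a false local claim.

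A triangle with $M>d=\rho^3+(1-\rho)^3$ need not have a colour of weight about $\rho$ on all three pairs. Give the pairs weights $(1,1,d+\delta)$ in one colour and $(0,0,1-d-\delta)$ in another. Then $M=d+\delta>d$, while $d<\rho$ for $d<1$. What is actually true (\cref{lem:pair-product}) is weaker: a witness colour exceeds $d$ on each pair, has pairwise products above $d$, and has \emph{sum} above $3\rho$.

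Since $d$ may lie below $1/2$, a pair can carry two colours above $d$ (say $0.4$ and $0.6$ when $d=0.35$). For example, the triangle with weights $(0.4,1,1)$ in one colour and $(0.6,0,0)$ in another has $M=0.4>d$, yet its majority colours are not monochromatic. So the witness colour is not determined pair by pair, and ``every good triangle is monochromatic in the dominant colouring'' cannot be derived from local information. Your removal argument therefore has nothing to act on.

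Your intended global conclusion is also false. Split the reduced vertices into two halves. Give pairs inside a half weight $1$ in colour $1$, and give cross pairs weight $\sqrt d+\delta$ in colour $1$ with the rest in colour $2$. Every triangle has $M>d$, yet colour $1$ is below $\rho$ on half of the pairs, because $\rho^2-d=(2\rho-1)(1-\rho)>0$.

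\textbf{What is missing is a genuinely global statement.} This is the weighted monochromatic-clique theorem (\cref{thm:weighted-clique}) and its robust form (\cref{cor:robust-weighted-clique}). At threshold $d_0\ge 1/3$, each pair has at most two heavy colours. A $K_4$ case analysis exploiting $w_p(e)w_q(e)\le 1/4<d_0$ for distinct colours, followed by induction and a Kruskal--Katona supersaturation count, gives one colour $c$ with $w_c>d_0$ on almost all reduced pairs. Only then does the paper use the triple-sum bound $w_c(ab)+w_c(ac)+w_c(bc)>3\rho_0$ on almost all triples, and it \emph{averages} this bound to obtain \ref{item:p1}.

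\textbf{Your treatment of \ref{item:p2} also points the wrong way.} Counting triangles of $\partial C$ can only bound $e_C$ from above. The paper instead argues as follows:
\begin{enumerate}
\item Every edge of $H\setminus C$ has all three pairs in $\overline{\partial C}$.
\item AM--GM applied to the triple-sum bound gives $(1-x)(1-y)(1-z)\le(1-\rho_0)^3$ on almost all reduced triples.
\item Counting with \cref{lem:regular-triangle} in the complement graph (complements of regular pairs are regular) gives $e_{H\setminus C}(X,Y,Z)\le(1-\rho_0)^3|X||Y||Z|+o(n^3)$ for all $X,Y,Z$.
\item Subtracting this from the density of $H$, and using $d=\rho^3+(1-\rho)^3$, yields the lower bound on $e_C$.
\end{enumerate}
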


The global shadow estimate in part~\ref{item:p1} is used for consistency.
Parts~\ref{item:p2} and~\ref{item:p3} provide the uniform density and
positive minimum vertex degree needed for the space condition, while
part~\ref{item:p2} also supplies the aperiodic closed walk.
The proof is given in \cref{sec:pf-of-v-dominant}.

\begin{lemma}[Codegree dominant component]\label{lem:codeg-dominant}
Let $d,\alpha>\kappa$. There exist $\tau>0$
with the following property. For every $\eps>0$, there exist $\mu>0$ and
$n_0\in\N$ such that every $(n,d,\mu)$-dense $3$-graph $H$ with
$n\ge n_0$ and $\delta_2(H)\ge\alpha n$ contains a spanning tight component $C$
satisfying
\begin{enumerate}[label=\textup{(\roman*)},leftmargin=2.4em]
 \item for all $X,Y\subseteq V(H)$, $e_{\partial C}(X,Y)\ge (\kappa + \tau)|X||Y|-\eps n^2$;
 \item for all $X,Y,Z\subseteq V(H)$,
$e_C(X,Y,Z)\ge\tau|X||Y||Z|-\eps n^3$;
 \item $\delta_1(C)\ge\tau\binom{n-1}{2}$;
 \item $C$ contains a copy of $C_7^{(3)}$.
\end{enumerate}
\end{lemma}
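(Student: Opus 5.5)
We follow the component-coloring strategy outlined in the introduction. We color each pair of $\partial H$ by the tight component whose shadow contains it. Since $\delta_2(H)\ge\alpha n>0$, every pair lies in $\partial H$, so the shadows $G_1,G_2,\dots$ of the tight components partition $\binom{V(H)}2$. The codegree condition says that for every pair $xy\in G_i$, all $\ge\alpha n$ vertices $z$ with $xyz\in E(H)$ satisfy $xz,yz\in G_i$. Hence every edge of $G_i$ has at least $\alpha n$ common neighbours in $G_i$. This is exactly the hypothesis of \cref{lem:finite-support-transfer} with $q=\alpha$.

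\textbf{Finite reduction.} First we discard small components. A color class with fewer than $\gamma_0 n^2$ edges still has every edge in $\alpha n$ triangles. So each such class carries at least $e(G_i)\alpha n/3$ triangles, while \cref{lem:kk-triangle} bounds this by $(2e(G_i))^{3/2}/6$. Comparing the two gives $e(G_i)\ge c\alpha^2n^2$. Every color class is therefore dense, and there are at most $L=O(\alpha^{-2})$ classes, with no tail to discard.

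We then apply \cref{lem:multicolor-regularity} with these $L$ colors to get clusters $V_1,\dots,V_m$ and weights $w_i(ab)=d_{G_i}(V_a,V_b)$. These satisfy $\sum_i w_i(ab)\approx1$ on regular pairs. By \cref{lem:finite-support-transfer}, for each regular pair with $w_i(ab)\ge\gamma$ we have
\[
\frac1m\sum_c w_i(ac)w_i(bc)\ge\alpha-o(1).
\]
The density condition transfers through \cref{lem:regular-triangle}. Edges of $H$ in component $i$ only lie on triangles of $G_i$, so for all cluster sets the weighted sum $\sum_i\sum_{a,b,c} w_i(ab)w_i(bc)w_i(ac)$ over boxes is at least $d$ times the box volume, up to error. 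Taking singleton boxes, every regular triangle $abc$ of clusters satisfies $\sum_i w_i(ab)w_i(bc)w_i(ac)\gtrsim d$.

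\textbf{Main step: a stability analysis on the reduced weighted graph.} This is the hard part. We must show that one color $i^*$ has $w_{i^*}(ab)\ge\kappa+2\tau$ on almost all pairs and is also spanning.

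The heuristic is as follows. Fix a color $i$ and a pair $ab$ with $w_i(ab)=p\ge\gamma$. The common-neighbour inequality forces mass $\ge\alpha$ of clusters $c$ with both $w_i(ac)$ and $w_i(bc)$ large. If two colors $i\ne j$ both had substantial weight, then for a typical vertex $a$ the vectors $w_i(a\cdot)$ and $w_j(a\cdot)$ would have to share the cluster mass. The inequality $\sum_c w_i(ac)w_i(bc)\ge\alpha m$ would then force $\|w_i(a\cdot)\|_1\gtrsim\alpha$ on the support. The triangle condition applied to mixed-color configurations shows that the density $d$ can be met only if some class has weight $\rho$ with $(1-\rho)^3$-type losses. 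The two-color extremal configuration is precisely the construction of \cref{prop:codegree-obstruction}. There, the constraints reduce to $\min\{q,(1-q)^3\}\le\kappa$, and any pair $(d,\alpha)$ exceeding $\kappa$ breaks it.

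We would first prove the robust two-color inequality: for any two colors, the minimum of the transferred codegree and triangle quantities is at most $\kappa+o(1)$ unless one color carries weight $\ge\kappa+\tau$ almost everywhere. We then reduce many colors to two by merging all but the heaviest color at a vertex, since merging only increases common-neighbour sums. This optimization is where I expect most of the difficulty.

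\textbf{Transfer back.} Once a dominant color $i^*$ is found, let $C$ be its component.

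Part~(i) follows from the regularity approximation: $w_{i^*}\ge\kappa+2\tau$ on almost all pairs gives $e_{\partial C}(X,Y)\ge(\kappa+\tau)|X||Y|-\eps n^2$.

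Part~(ii) follows from \cref{lem:regular-triangle}, using that triangles of $\partial C$ that are edges of $H$ are edges of $C$. The density of $H$ restricted to $\partial C$-triangles yields a positive $\tau$.

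For part~(iii), and for spanning, take any vertex $v$. Its link has $\ge\alpha n$ vertices for each pair. Since $\partial C$ is dense in every box, $v$ has many pairs $vu$ in $\partial C$, and each contributes $\alpha n$ edges of $C$. This gives $\delta_1(C)\ge\tau\binom{n-1}2$.

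Part~(iv) follows from \cref{lem:C7} applied to $C$, which is $(\tau,\eps)$-dense. This needs $\tau>4/27$. If that fails, we instead use the dense structure inside $C$ directly to build the seven-cycle through a supersaturation argument.
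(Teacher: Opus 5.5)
There is a genuine gap: the step that carries the lemma is left as a heuristic, and the spanning and $C_7^{(3)}$ arguments fail as written.

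\textbf{The main step.} Your reduction agrees with the paper's up to the reduced weighted graph. That includes the component coloring, the Kruskal--Katona bound giving every shadow $\Omega(\alpha^2n^2)$ edges, multicolor regularity, $M(a,b,c)\gtrsim d$ on regular triangles, and $K_i(a,b)\ge\alpha-o(1)$ from \cref{lem:finite-support-transfer}. But extracting a single dominant color from these conditions is only sketched. The reduction you propose also cannot work. Merging colors increases $M$ and $K$, so the hypotheses survive. However, if the merged class is the heavy one, you learn nothing about any single tight component. Moreover, ``the heaviest color at a vertex'' does not define a global recoloring of pairs. The paper needs three concrete ingredients here.
\begin{enumerate}
 \item A light/heavy dichotomy. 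By \cref{lemma:kappa-inequality}, $a\le t$ and $\Phi(a,b,c)\ge d$ force $bc<\kappa$. So a color with $\eta\le w_i(ab)\le t$ would have $K_i(a,b)<\kappa+o(1)<t$. Hence on typical pairs every color is either negligible or has weight above $t$; this is exactly where $\kappa=(1-\kappa)^3$ enters.
 \item A color $c$ with $d_c(a)\ge t$ at almost every cluster. This comes from applying \cref{lem:rank-three} to the sets $\{i:d_i(a)\ge t\}$, with the exceptional four-color configuration excluded by a $K_4$ product bound. A purity/propagation argument then upgrades it to $w_c>t$ on almost all pairs (\cref{lem:codeg-weighted-dominance}).
 \item The finer structure of \cref{thm:codeg-weighted-structure}. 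There is at most one other present color $b$. On clean pairs, $w_b>t$ exactly when both ends lie in $S=\{a:d_b(a)\ge(1-\kappa)^2\}$, and $|S|\ge(1-\kappa-\eps)r$.
\end{enumerate}
Once dominance is established, your (i) and (ii) go through. For (ii), bound $e_{H\setminus C}$ by $(1-t)^3|X||Y||Z|$ plus error and use $(1-t)^3<\kappa<d$. Ingredient (iii), however, is indispensable for what follows.

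\textbf{Spanning and part (iv).} Box density of $\partial C$ says nothing about an individual vertex: for $X=\{v\}$, part (i) is vacuous because of the $\eps n^2$ term. So nothing in your plan excludes a vertex $v$ all of whose pairs lie in the shadow of another component $D$. The paper rules this out as follows.
\begin{enumerate}
 \item Such a $v$ gives $\delta(\partial D)\ge\alpha n$, hence $d_b(a)\ge\alpha/2$ for every cluster, so $U\subseteq S$.
 \item Then almost every reduced pair is heavy in both $c$ and $b$, with $w_b=1-w_c$ and $K_c,K_b\ge t>1/4$.
 \item This contradicts the weighted cherry lemma, \cref{lemma:weighted cherry dense}, which is a second ingredient absent from your plan.
\end{enumerate}
Only once $v$ lies in some pair $uv\in\partial C$ does your count give (iii): $2\deg_C(v)\ge\sum_{z\in N_H(uv)}\deg_C(vz)\ge\alpha^2n^2$.

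For (iv), the uniform density of $C$ can be far below $4/27$ when $d,\alpha$ are close to $\kappa$. Since $4/27$ is exactly the uniform Tur\'an density of $C_7^{(3)}$, no supersaturation argument based on the density of $C$ can produce the cycle. The missing idea is localization.
\begin{enumerate}
 \item If $C=H$, apply \cref{lem:C7} to $H$ directly.
 \item If $C\ne H$, the cherry lemma forces $|S\cap U|\le(1-\gamma)r$. So the clusters in $U\setminus S$ span a linear-size set $T$ on which $\partial D$ has density at most an arbitrarily small $\nu$.
 \item By \cref{lem:kk-triangle}, $D[T]$ then has at most $(2\nu)^{3/2}|T|^3/6$ edges.
 \item Hence $C[T]$ inherits density $d>\kappa>4/27$ from $H$, and \cref{lem:C7} applies to $C[T]$.
\end{enumerate}
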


The consistency step in the codegree setting rests on the following sharp
shadow lemma.

\begin{lemma}[Shadow consistency]\label{lem:shadow-consistency}
For every $t>0$ and $q>1/4$, there exist $\eps>0$ and $s_0\in\N$ such
that the following holds. Let $R$ and $B$ be edge-disjoint graphs on a
common vertex set of size $s\ge s_0$. Suppose that, for all
$X,Y\subseteq V(R)=V(B)$,
\[
 e_R(X,Y),e_B(X,Y)\ge t|X||Y|-\eps s^2,
\]
and every edge of $R$ has at least $qs$ common neighbors in $R$, while
every edge of $B$ has at least $qs$ common neighbors in $B$. Then no such
pair $(R,B)$ exists.
\end{lemma}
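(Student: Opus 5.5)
The plan is to argue by contradiction: pass to a weighted reduced graph via \cref{lem:multicolor-regularity}, and then reach a contradiction through a linear-algebraic inequality for the ``excess'' matrix $D:=P_R-P_B$. I have not checked that the last step closes; it is identified as the main obstacle below.

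\textbf{Step 1: reduction.} Suppose such a pair $(R,B)$ exists, and write $q=1/4+4\zeta$ with $\zeta>0$. Apply \cref{lem:multicolor-regularity} with $L=2$ and a small $\eta\ll\zeta,t$ to the two-colored graph (red $R$, blue $B$, the rest a third colour). This gives clusters $V_1,\dots,V_m$ with $m\le M(\eta)$. Now choose $\eps\ll 1/M^2$. Then the box condition applied to $X=V_a$, $Y=V_b$ gives $r(ab):=d_R(V_a,V_b)\ge t/2$ and $b(ab):=d_B(V_a,V_b)\ge t/2$ for every pair $a\neq b$. Moreover $r(ab)+b(ab)\le1$, since $R,B$ are edge-disjoint. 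So every regular pair has both weights at least $\gamma:=t/2$, and \cref{lem:finite-support-transfer} applies to both colours. After discarding the $O(\sqrt\eta\, m)$ clusters lying in many irregular pairs, we get, for almost all pairs $a,b$,
\[
 \frac1m\sum_c r(ac)r(bc)\ge q-\zeta,\qquad \frac1m\sum_c b(ac)b(bc)\ge q-\zeta .
\]
The diagonal terms are irrelevant, since they are $O(1/m)$.

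\textbf{Step 2: the excess matrix.} Set $s(ac):=r(ac)+b(ac)\le1$ and $D(ac):=r(ac)-b(ac)$. Then
\[
 r_ar_b+b_ab_b=\tfrac12\bigl(s_as_b+D_aD_b\bigr)\le\tfrac12\bigl(1+D_aD_b\bigr)
\]
coordinatewise. Adding the two inequalities of Step~1 therefore gives
\[
 \frac1m\sum_c D(ac)D(bc)\ge 4q-1-4\zeta\ge c_0>0
\]
for almost all $a,b$. So the rows of the symmetric matrix $D$, which has entries in $[-1,1]$, have pairwise inner products at least $c_0m$, while each row has norm at most $\sqrt m$.

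\textbf{Step 3: sign consistency and the final count.} I expect this to be the main obstacle. The aim is to show that pairwise positive correlation of all rows, together with the symmetry of $D$, forces $D$ to have an essentially constant sign pattern. Concretely, I would try to prove that there is a global sign $\sigma\in\{\pm1\}$ with
\[
 \frac1{m^2}\sum_{a,c}\bigl(\sigma D(ac)\bigr)_-\ \text{small}
 \quad\text{and}\quad
 \frac1{m^2}\sum_{a,c}\sigma D(ac)\ge c_1>0 .
\]
The first attempt is spectral. Entrywise $D^2\ge c_0 mJ$, so the all-ones direction carries a large eigenvalue of $D^2$. Hence $D$ has an eigenvalue $\lambda$ with $|\lambda|\ge c_0m$, whose eigenvector $v$ is close to $\pm\mathbf 1$ by a Perron-type argument applied to $D^2$. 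Then symmetry gives $\mathbf 1^{T}D\mathbf 1\approx\lambda m$, which fixes $\sigma$.

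Suppose, say, $\sigma=+1$, so red dominates. Then $b(ac)\le\tfrac12(1-D(ac))$, and this bound is at most $1/2$ except on a small set of entries. Hence, for a typical $a$,
\[
 \frac1m\sum_c b(ac)^2\le\frac14-c_2 .
\]
By Cauchy--Schwarz, averaging the blue inequality of Step~1 over $b$ bounds its left-hand side by $\max_a\frac1m\sum_c b(ac)^2+o(1)$. This yields $q-\zeta\le 1/4$, a contradiction since $q>1/4$.

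If the spectral route to sign consistency is too lossy, there is a fallback. I would instead weight the blue inequality by $r(ab)$ and sum over $a,b$. This compares the blue ``cherries'' sitting on red edges with the quantity $\sum_c R_cB_c$, where $R_c,B_c$ are the average weighted degrees, and uses $R_c+B_c\le1$ to obtain the same $1/4$ barrier directly.
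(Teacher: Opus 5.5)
Your Step~1 is sound and matches the paper's reduction. Choosing $\eps\ll1/M^2$ after fixing the regularity parameters forces both reduced densities $r(ab),b(ab)$ to be at least $t/2$ on every retained pair. Then \cref{lem:finite-support-transfer} gives $K_R(a,b),K_B(a,b)\ge q-o(1)$ on the \emph{same} pair, for almost all pairs.

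The gap is in Step~3, and Step~2 causes it. Adding the two cherry inequalities into the single condition $\frac1m\sum_cD(ac)D(bc)\ge c_0$ throws away exactly the information you need. The sign consistency you want does not follow from that condition. Take $P$ a random symmetric $\pm1$ matrix, $\epsilon>0$ a small constant, and $D=\epsilon J+(1-\epsilon)P$. This $D$ is symmetric, and any two rows have inner product $(\epsilon^2-o(1))m\ge c_0m$ once $q$ is close to $1/4$. Yet about half its entries equal $2\epsilon-1<0$, so ``$b(ac)\le 1/2$ off a small set'' cannot be derived this way. The Perron step is also overstated. For a matrix with entries in $[c_0m,m]$, Perron--Frobenius only gives a positive eigenvector whose entries agree up to a factor of order $1/c_0$, not one close to $\mathbf 1$; moreover $D^2$ is positive only on almost all entries. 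So $\mathbf 1^{T}D\mathbf 1\approx\lambda m$ is unjustified.

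Your fallback points the right way, but as described it uses only the blue inequality, and no one-sided argument can reach $1/4$. The constant weights $r\equiv0.4$, $b\equiv0.6$ satisfy $r+b\le1$, satisfy $r,b\ge t/2$ for $t\le4/5$, and give $K_B\equiv0.36$ on every pair. Weighting the blue inequality by $r(ab)$ only bounds $q$ by an $R$-weighted average of $B_c/m$, which is far above $1/4$.

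The missing idea is to cross-weight \emph{both} inequalities. Multiply the red cherry bound on $ab$ by $b(ab)$ and the blue one by $r(ab)$, add them, and sum over ordered pairs. Write $R_a:=\sum_cr(ac)$ and $B_a:=\sum_cb(ac)$. For fixed $a$, renaming $b\leftrightarrow c$ in the second term shows the total equals $\sum_{b,c}b(ab)r(ac)\bigl(r(bc)+b(bc)\bigr)$. This is at most $B_aR_a\le\frac14(R_a+B_a)^2\le\frac m4(R_a+B_a)$; in effect, each non-monochromatic triangle is counted through its red--blue cherries. On the other side, the good pairs give the lower bound $(q-o(1))m\sum_a(R_a+B_a)-o(m^3)$. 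Since $\sum_a(R_a+B_a)\ge(t-o(1))m^2$, this forces $q\le1/4+o(1)$, a contradiction.

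This is precisely the paper's finish. It applies \cref{lemma:weighted cherry dense} to $W=r$, after noting that $K_{1-R}\ge K_B$ because $b\le1-r$. With this step in place, your Steps~2--3 become unnecessary.
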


The proofs of \cref{lem:codeg-dominant,lem:shadow-consistency} are given in
\cref{sec:pf-of-co-dominant}.

\subsection{Local Hamilton frameworks}\label{subsct:loc-1}

We now convert the dominant components into local Hamilton frameworks.

\begin{lemma}[Vertex-degree local Hamilton framework]
\label{lem:local-vdeg-framework}
For every $d\in(1/3,1]$ and $\alpha>f(d)$, there exist $\mu>0$ and
$s_0\in\N$ such that, for every $s\ge s_0$, the family $\cP_s$ of all
$(s,d,\mu)$-dense $3$-graphs $H$ satisfying
$\delta_1(H)\ge\alpha\binom{s-1}{2}$ admits a Hamilton framework.
\end{lemma}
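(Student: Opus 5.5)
Given $d\in(1/3,1]$ and $\alpha>f(d)$, put $\rho:=\rho(d)$ and fix $\eps>0$ so small that $\alpha+\rho-1-\eps>0$ and $\rho-\eps>1/2+\eps$. This is possible since $\rho>2/3$. Apply \cref{lem:v-dominant} with this $\eps$ to obtain $\mu$ and $n_0$, and take $s_0\ge n_0$ large, also exceeding the thresholds of \cref{lem:perfect-matching,lem:C7}. For $H\in\cP_s$, let $\cF(H):=C$ be the spanning tight component from \cref{lem:v-dominant}. This gives \ref{item:F1} directly. If several components qualify, any is fine, since the consistency argument below will show that any two such choices share a pair.

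\textbf{Step 1: \ref{item:F2} and \ref{item:F3}.} By parts \ref{item:p2} and \ref{item:p3}, the component $C$ is $(s,\rho^3-\eps,2\eps)$-dense and has $\delta_1(C)\ge\tau\binom{s-1}2$ with $\tau:=\alpha+\rho-1-\eps>0$.

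For \ref{item:F2}, I would take a uniform convex combination of near-perfect matchings. One way is to pass to a blow-up and apply \cref{lem:perfect-matching}. More simply, apply \cref{lem:perfect-matching} to $C-T$ for every set $T$ of at most $2$ vertices making the order divisible by $3$; the removal barely changes density and degree. Averaging these matchings and then correcting the deficits is standard. Cleaner still: for each vertex $v$, apply \cref{lem:perfect-matching} to $C-W$ for a suitable small set $W\not\ni v$, and average over many choices of $W$ so that every vertex is covered with equal weight. I would settle on the cleanest version when writing it out, but the point is that the density and degree of $C$ make the perfect-matching polytope robustly nonempty.

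For \ref{item:F3}, $\rho^3>8/27>4/27$, so \cref{lem:C7} yields a copy of $C_7^{(3)}$ in $C$. This is a closed walk of order $7\equiv1\pmod 3$, and it lies in $C$ because $C_7^{(3)}$ is tightly connected.

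The density parameter $\mu$ must be chosen relative to the constants of \cref{lem:perfect-matching,lem:C7} applied with density $\rho^3-\eps$ and degree $\tau$. Part \ref{item:p2} gives error $\eps n^3$ for a fixed $\eps$, so I choose $\eps$ small enough in terms of those lemmas' $\mu$ first, and then choose the $\mu$ of \cref{lem:v-dominant}.

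\textbf{Step 2: \ref{item:F4}, the main obstacle.} Let $J$ have $s+1$ vertices with $H=J-x$ and $H'=J-y$ in $\cP_s$, and let $C=\cF(H)$ and $C'=\cF(H')$. The plan is to show $\partial C\cap\partial C'\ne\emptyset$. Then some edge of $C$ and some edge of $C'$ share a pair in $J$, so the union is tightly connected.

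Both shadows live on $V(J)\setminus\{x,y\}$ up to at most $2s$ pairs. By part \ref{item:p1}, each has at least $(\rho-\eps)\binom s2-s$ pairs inside the common $(s-1)$-set $V(J)\setminus\{x,y\}$, which has $\binom{s-1}{2}$ pairs. Since $2(\rho-\eps)>1+2\eps$, these two pair sets must intersect once $s$ is large.

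A common pair $uv$ gives edges $uvw\in C$ and $uvw'\in C'$. These are tightly adjacent in $J$, and $C,C'$ are each tightly connected, so $C\cup C'$ is tightly connected in $J$.

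The only subtlety is that tight components of $H$ need not be tight components of $J$, but tight connectivity is all that \ref{item:F4} asks for. This is exactly where $\rho>1/2$ is used. I expect this step to be easy here, in contrast to the codegree case. The main care instead goes into the order of constants in Step 1.
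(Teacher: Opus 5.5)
Your proposal is correct. For \ref{item:F1}, \ref{item:F3} and \ref{item:F4} it is essentially the paper's argument: you take the component from \cref{lem:v-dominant}, apply \cref{lem:C7} to $C$ itself, and pigeonhole the two shadows inside $W=V(J)\setminus\{x,y\}$ using $\rho-\eps>1/2$.

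The real difference is in \ref{item:F2}. The paper forms the balanced blow-up $C(3)$ and transfers the uniform density of $C$ to it by a level-set (Fubini) argument. It then checks $\deg_{C(3)}(x)=9\deg_C(v)$, applies \cref{lem:perfect-matching} once to the $3s$-vertex graph $C(3)$, and projects the matching with weights $m_e/3$.

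Your deletion-and-averaging option also works and is more elementary, but state it precisely. Let $r\in\{0,1,2\}$ with $r\equiv s\pmod 3$. For every $r$-set $T$, take a perfect matching $M_T$ of $C-T$; density and degree survive deleting at most two vertices. Then set $\omega:=\binom{s-1}{r}^{-1}\sum_T\one_{M_T}$. A vertex $v$ is covered exactly once by $M_T$ precisely when $v\notin T$, so every vertex receives weight exactly $1$. Also $\omega(e)\le\binom{s-3}{r}/\binom{s-1}{r}\le 1$. So no ``deficit correction'' is needed, and the other vaguer variants can be dropped. This route avoids the density transfer to the blow-up, at the harmless cost of invoking the matching lemma $\binom sr$ times instead of once.

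One fix is needed in your order of constants, which is circular as written. The outputs of \cref{lem:perfect-matching,lem:C7} applied with density $\rho^3-\eps$ and degree $\alpha+\rho-1-\eps$ depend on $\eps$, so you cannot then choose $\eps$ below them. Instead, as the paper does, first fix $\eps$-independent targets such as $d_*=\rho^3/2$ and $\alpha_*=(\alpha+\rho-1)/2$, and obtain $\mu_{\rm pm},\mu_7$. Only then choose $\eps$ with $\rho^3-\eps>d_*$, $\alpha+\rho-1-\eps>\alpha_*$, and $\eps$ small compared with $\mu_{\rm pm}$ and $\mu_7$ (for instance $\eps<\mu_{\rm pm}/2$, so the deleted graphs $C-T$ stay $(s-r,d_*,\mu_{\rm pm})$-dense).
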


\begin{proof}
Put $\rho:=\rho(d)$,
$d_*:=\rho^3/2>4/27$, and
$\alpha_*:=(\alpha+\rho-1)/2>0$. Apply
\cref{lem:perfect-matching} with $d_*,\alpha_*$, obtaining
$\mu_{\rm pm}>0$, and apply \cref{lem:C7} with
$d_*$, obtaining $\mu_7>0$. Choose
$\eps>0$ so small that
\[
 \rho^3-\eps>d_*,\qquad
 \alpha+\rho-1-\eps>\alpha_*,\qquad
 \rho-\eps>\frac12,
\]
and $\eps<\min\{\mu_{\rm pm},\mu_7\}$. Apply
\cref{lem:v-dominant} with error $\eps/2$, and increase the resulting
$s_0$ so that all order requirements below are satisfied.

For each $H\in\cP_s$, choose a component $C(H)$ supplied by
\cref{lem:v-dominant} and set $\cF(H):=C(H)$. Fix $H\in\cP_s$ and write
$C:=C(H)$. We verify the
four framework conditions.

Condition~\ref{item:F1} holds since $\cF(H)=C$ is a spanning tight
component of $H$.

{\bf Verifying~\ref{item:F2}.}
Let $C(3)$ be the balanced blow-up obtained by replacing every vertex
$v\in V(C)$ with a cluster $V_v$ of size $3$, and every edge of $C$ with
the complete tripartite $3$-graph on the corresponding clusters. For $Y_i\subseteq V(C(3))$, where $i\in[3]$, define
$a_i(v):=|Y_i\cap V_v|/3$. Then
$\sum_{v\in V(C)}a_i(v)=|Y_i|/3$, and
\[
 e_{C(3)}(Y_1,Y_2,Y_3)
 =
27\sum_{(u,v,w):\,uvw\in E(C)}
a_1(u)a_2(v)a_3(w).
\]
Recall that $x=\int_0^1\one_{\{x\ge t\}}\dd t$ holds for each $x\in [0,1]$.
For $t\in[0,1]$, write $A_i(t):=\{v\in V(C):a_i(v)\ge t\}$.
For each $i\in[3]$,
\[
\int_0^1|A_i(t)|\,\dd t
 =\sum_{v\in V(C)}a_i(v)
 =\frac{|Y_i|}{3}.
\]
By Fubini's theorem and part~\ref{item:p2} of
\cref{lem:v-dominant},
\begin{align*}
 e_{C(3)}(Y_1,Y_2,Y_3)
 &=27\int_{[0,1]^3}
e_C\bigl(A_1(u),A_2(v),A_3(w)\bigr)
 \,\dd u \dd v \dd w\\
 &\ge
 \left(\rho^3-\frac{\eps}{2}\right)
 |Y_1||Y_2||Y_3|
 -\frac{\eps}{2}(3s)^3.
\end{align*}
Thus $C(3)$ is $(3s,\rho^3-\eps/2,\eps)$-dense and, by the choice of
$\eps$, satisfies the density condition in \cref{lem:perfect-matching}.
Moreover, for each $x\in V_v$, part~\ref{item:p3} of
\cref{lem:v-dominant} gives
\[
 \deg_{C(3)}(x)
 =9\deg_C(v)
 \ge
 9\left(\alpha+\rho-1-\frac{\eps}{2}\right)
 \binom{s-1}{2}
 \ge
 \alpha_{*}\binom{3s-1}{2},
\]
where the last inequality holds for all sufficiently large $s$. Since $3\mid3s$, \cref{lem:perfect-matching} yields a perfect matching
$M$ in $C(3)$.
For each $e\in E(C)$, let $m_e$ be the number of edges of $M$ whose cluster
type is $e$, and define $\omega(e):=m_e/3$.  Since $M$ is a matching,
$0\le m_e\le3$.  Moreover, $M$ covers all three vertices in each cluster,
so, for every $v\in V(C)$,
\[
 \sum_{e\ni v}\omega(e)
 =\frac13\sum_{e\ni v}m_e
 =1.
\]
Hence $\omega$ is a perfect fractional matching of $C$, and
\ref{item:F2} holds.

{\bf Verifying~\ref{item:F3}.}
By part~\ref{item:p2} of \cref{lem:v-dominant}, the component $C$ is
$(s,d_*,\mu_7)$-dense. Hence \cref{lem:C7} gives a copy of $C_7^{(3)}$
in $C$, and $7\equiv1\pmod3$.

{\bf Verifying~\ref{item:F4}.}
Let $J$ be an $(s+1)$-vertex $3$-graph and suppose that
$H=J-x,H'=J-y\in\cP_s$ for distinct vertices $x,y$. Put
$W:=V(J)\setminus\{x,y\}$. Since deleting one vertex removes at most
$s-1$ shadow edges, part~\ref{item:p1} of \cref{lem:v-dominant} gives
\[
 e_{\partial C(H)}(W),e_{\partial C(H')}(W)
 \ge(\rho-\eps)\binom s2-(s-1)
 >\frac12\binom{s-1}{2}
\]
for sufficiently large $s$. The two shadows therefore share a pair in
$\binom W2$, so $C(H)\cup C(H')$ is tightly connected in $J$.

Thus $\cP_s$ admits a Hamilton framework.
\end{proof}

\begin{lemma}[Codegree local Hamilton framework]
\label{lem:local-codeg-framework}
For every $d,\alpha>\kappa$, there exist $\mu>0$ and $s_0\in\N$ such
that, for every $s\ge s_0$, the family $\cP_s$ of all
$(s,d,\mu)$-dense $3$-graphs $H$ satisfying $\delta_2(H)\ge\alpha s$
admits a Hamilton framework.
\end{lemma}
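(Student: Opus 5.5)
The proof follows the vertex-degree case (\cref{lem:local-vdeg-framework}), with \cref{lem:codeg-dominant} in place of \cref{lem:v-dominant} and \cref{lem:shadow-consistency} in place of the shadow-counting argument for consistency.

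\textbf{Choosing constants.} Given $d,\alpha>\kappa$, let $\tau>0$ be as in \cref{lem:codeg-dominant}. Put $d_*:=\tau/2$ and $\alpha_*:=\tau/2$, and apply \cref{lem:perfect-matching} with $(d_*,\alpha_*)$ to obtain $\mu_{\rm pm}$. Apply \cref{lem:shadow-consistency} with $t:=\kappa+\tau/2$ and $q:=\alpha-\kappa/2>1/4$, where $q>1/4$ because $\alpha>\kappa>1/4$. Any $q\in(1/4,\alpha)$ leaving slack would also do. This gives $\eps_{\rm sc}$ and $s_{\rm sc}$. Choose $\eps$ much smaller than $\tau$, $\mu_{\rm pm}$ and $\eps_{\rm sc}$. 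Then apply \cref{lem:codeg-dominant} with error $\eps$ to obtain $\mu$ and the order bound, and take $s_0$ large. For each $H\in\cP_s$, let $\cF(H)$ be the component $C(H)$ given by \cref{lem:codeg-dominant}.

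\textbf{Conditions \ref{item:F1}--\ref{item:F3}.} \ref{item:F1} holds because $C(H)$ is a spanning tight component of $H$. \ref{item:F3} is part (iv) of \cref{lem:codeg-dominant}, since $7\equiv1\pmod 3$. For \ref{item:F2}, repeat the blow-up argument verbatim: $C(3)$ is $(3s,\tau-\eps,\eps)$-dense by part (ii) and the level-set integration, and it has minimum vertex degree at least $9\tau\binom{s-1}{2}\ge\alpha_*\binom{3s-1}2$ by part (iii). \cref{lem:perfect-matching} then gives a perfect matching of $C(3)$, which we convert into a perfect fractional matching of $C(H)$.

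\textbf{Consistency \ref{item:F4}: the main step.} Let $J$ have $s+1$ vertices with $H=J-x$ and $H'=J-y$ in $\cP_s$, and set $W:=V(J)\setminus\{x,y\}$. Suppose, for contradiction, that $C(H)\cup C(H')$ is not tightly connected in $J$. Let $R:=\partial C(H)[W]$ and $B:=\partial C(H')[W]$.

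\emph{$R$ and $B$ are edge-disjoint.} A common pair $uv\in\binom W2$ lies in an edge of $C(H)$ and in an edge of $C(H')$, both edges of $J$ sharing $uv$. These two edges are tightly adjacent, so the components would be tightly connected.

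\emph{Density.} Part (i) of \cref{lem:codeg-dominant}, restricted to subsets of $W$, gives $e_R(X,Y)\ge(\kappa+\tau)|X||Y|-\eps s^2$, and the same for $B$. Both are at least $t|X||Y|-\eps_{\rm sc}(s-1)^2$.

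\emph{Common neighbours.} This is the point that needs care. Take $uv\in R$ and write $H[W]=J[W]$. Then $uv$ has at least $\alpha s-1$ neighbours $w\in W$ with $uvw\in E(H)$. Each such edge shares the pair $uv$ with an edge of $C(H)$, so it lies in $C(H)$ by tight maximality. Hence $uw,vw\in\partial C(H)$ with $w\in W$, so $uw,vw\in R$. Thus every edge of $R$ has at least $\alpha s-1\ge q(s-1)$ common neighbours in $R$. The same argument in $H'$ applies to $B$.

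\cref{lem:shadow-consistency} on the vertex set $W$ of size $s-1\ge s_{\rm sc}$ now says no such pair $(R,B)$ exists, a contradiction. So \ref{item:F4} holds and $\cP_s$ admits a Hamilton framework. The only delicate part is this consistency step: the common-neighbour property must be derived inside $W$, by checking that codegree edges through a shadow pair stay in the same component.
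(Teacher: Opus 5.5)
Your proposal is correct and follows the paper's proof essentially step for step. You take $\cF(H)$ to be the component from \cref{lem:codeg-dominant}, use the same blow-up argument for \ref{item:F2}, and obtain \ref{item:F4} by the same contradiction: the restricted shadows are edge-disjoint, and \cref{lem:shadow-consistency} applies once the common-neighbour bound $\deg_H(uv)-1\ge q|W|$ is derived inside $W$.

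There is one small slip in your constants. Your specific choice $q:=\alpha-\kappa/2$ is only guaranteed to exceed $\kappa/2\approx0.159$, not $1/4$; for example, it fails when $\alpha$ is just above $\kappa$. Use instead something like $q:=(\alpha+\kappa)/2\in(1/4,\alpha)$, which your fallback remark already permits.
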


\begin{proof}
Let $\tau>0$ be supplied by \cref{lem:codeg-dominant}. Choose
$q$ with $1/4<q<\alpha$, and apply
\cref{lem:shadow-consistency} with $\kappa+\tau$ and $q$, obtaining
$\eps_{\rm con}>0$. Set
$d_*=\alpha_*:=\tau/{2}$.
Apply \cref{lem:perfect-matching} with $d_*,\alpha_*$, obtaining
$\mu_{\rm pm}>0$. Choose
constants satisfying
\[
 0< 1/{s_0}\ll \mu\ll \eps<
 \min\left\{\frac{\eps_{\rm con}}4,\mu_{\rm pm},\frac{\tau}{2}\right\},
\]
such that $\alpha s-1\ge q(s-1)$ whenever $s\ge s_0$. Apply \cref{lem:codeg-dominant} with this error $\eps$.

For each $H\in\cP_s$, choose a component $C(H)$ supplied by
\cref{lem:codeg-dominant} and set $\cF(H):=C(H)$. Fix
$H\in\cP_s$, and write $C:=C(H)$.

Conditions~\ref{item:F1}--\ref{item:F3} follow as in the proof of
\cref{lem:local-vdeg-framework}. Indeed, $C$ is spanning, so
\ref{item:F1} holds. For \ref{item:F2}, form the balanced blow-up $C(3)$. The same calculation, together with the
density conclusion of \cref{lem:codeg-dominant}, shows that $C(3)$ is
$(3s,d_*,\mu_{\rm pm})$-dense. Moreover, for every clone $x$ of a
vertex $v\in V(C)$,
\[
 \deg_{C(3)}(x)
 =9\deg_C(v)
 \ge9\tau\binom{s-1}{2}
 \ge\alpha_*\binom{3s-1}{2}
\]
for sufficiently large $s$. Hence \cref{lem:perfect-matching} gives a
perfect matching in $C(3)$, whose projection to the cluster types gives
a perfect fractional matching in $C$. Finally, the copy of
$C_7^{(3)}$ supplied by \cref{lem:codeg-dominant} is a closed walk of
order $7\equiv1\pmod3$, proving \ref{item:F3}.

{\bf Verifying~\ref{item:F4}.}
Let $J$ be an $(s+1)$-vertex $3$-graph, and suppose that
$H:=J-x$ and $H':=J-y$ both belong to $\cP_s$ for distinct vertices
$x,y\in V(J)$. Set
\[
 C':=C(H')
 \qquad\text{and}\qquad
 W:=V(J)\setminus\{x,y\}.
\]
Suppose that $C$ and $C'$ are not tightly connected in $J$. Then the
graphs
\[
 R:=\partial C[W]
 \qquad\text{and}\qquad
 B:=\partial C'[W]
\]
are edge-disjoint, since a pair belonging to both shadows would be
contained in an edge of each component and would therefore tightly
connect them in $J$.
For all $X,Y\subseteq W$, the shadow conclusion of
\cref{lem:codeg-dominant} gives
\[
 e_R(X,Y),e_B(X,Y)
 \ge(\kappa+\tau)|X||Y|-\eps s^2\ge(\kappa+\tau)|X||Y|-\eps_{\rm con}|W|^2
\]
for sufficiently large $s$.
Now let $uv\in E(R)$. Since $uv\in\partial C$, every edge of $H$
containing $uv$ belongs to $C$. Consequently, every vertex in
$N_H(uv)\setminus\{y\}$ is a common neighbor of $u$ and $v$ in $R$.
Thus $uv$ has at least
\[
 \deg_H(uv)-1
 \ge\alpha s-1
 \ge q(s-1)
 =q|W|
\]
common neighbors in $R$. The same argument shows that every edge of
$B$ has at least $q|W|$ common neighbors in $B$. This contradicts
\cref{lem:shadow-consistency}. Hence $C\cup C'$ is tightly connected
in $J$, proving \ref{item:F4}.

Thus $\cP_s$ admits a Hamilton framework.
\end{proof}

\subsection{Proofs of the main theorems}\label{sec:main-proof}

\begin{proof}[Proof of \cref{thm:main-vexdeg-curve}]
Fix $d\in(1/3,1]$ and $\alpha>f(d)$. Choose
$\eps>0$ so small that
\[
 d_0:=d-\eps>\frac13,
 \qquad
 \alpha_0:=\alpha-\eps>f(d_0).
\]
Apply \cref{lem:local-vdeg-framework} with $d_0,\alpha_0$, and let
$\mu'>0$ and $s_0\in\N$ be the resulting constants. Choose
$s\ge s_0$ sufficiently large that
\cref{lem:density-inheritance,lem:degree-inheritance} apply with $r=6$
and target parameters $d_0,\mu'$ and $\ell=1,\alpha_0,\eps$, respectively,
and such that
$e^{-s^{1/6}}+e^{-\sqrt s}<1/s^2$.

Let $\cP_s$ be the family of all $(s,d_0,\mu')$-dense $3$-graphs $H$
with $\delta_1(H)\ge\alpha_0\binom{s-1}{2}$. This family admits a
Hamilton framework. Choose $\mu>0$ sufficiently small and then
$n_0\in\N$ sufficiently large for the two inheritance lemmas and
\cref{lem:HBT-special}.

Let $H$ be an $(n,d,\mu)$-dense $3$-graph with $n\ge n_0$ and
$\delta_1(H)\ge\alpha\binom{n-1}{2}$. Fix a $6$-set
$R\subseteq V(H)$. Since $H$ is also $(n,d_0,\mu)$-dense and
$\alpha=\alpha_0+\eps$, all but at most
\[
 \left(e^{-s^{1/6}}+e^{-\sqrt s}\right)\binom{n-6}{s-6}
\]
of the $s$-sets containing $R$ induce a member of $\cP_s$. Thus at least
$(1-1/s^2)\binom{n-6}{s-6}$ such sets do so, and
\cref{lem:HBT-special} gives a tight Hamilton cycle in $H$.
\end{proof}

\begin{proof}[Proof of \cref{thm:main-codeg}]
Choose $\eps>0$ so small that
\[
 d_0:=d-\eps>\kappa,
 \qquad
 \alpha_0:=\alpha-2\eps>\kappa.
\]
Apply \cref{lem:local-codeg-framework} with $d_0,\alpha_0$, obtaining
$\mu'>0$ and $s_0\in\N$. Choose $s\ge s_0$ sufficiently large that the
inheritance lemmas apply with $r=6$, density parameters $d_0,\mu'$, and
degree parameters $\ell=2,\alpha-\eps,\eps$, and such that
\[
 (\alpha-\eps)(s-2)\ge\alpha_0s,
 \qquad
 e^{-s^{1/6}}+e^{-\sqrt s}<\frac1{s^2}.
\]
Let $\cP_s$ be the family of all $(s,d_0,\mu')$-dense $3$-graphs $H$
with $\delta_2(H)\ge\alpha_0s$. This family admits a Hamilton framework.

Choose $\mu>0$ sufficiently small and then $n_0\in\N$ sufficiently large.
Let $H$ be an $(n,d,\mu)$-dense $3$-graph with $n\ge n_0$ and
$\delta_2(H)\ge\alpha n$. For every
$6$-set $R\subseteq V(H)$, all but at most
\[
 \left(e^{-s^{1/6}}+e^{-\sqrt s}\right)\binom{n-6}{s-6}
\]
of the $s$-sets containing $R$ induce an $(s,d_0,\mu')$-dense $3$-graph
with codegree at least $(\alpha-\eps)(s-2)\ge\alpha_0s$. Therefore at
least $(1-1/s^2)\binom{n-6}{s-6}$ such sets induce a member of $\cP_s$,
and \cref{lem:HBT-special} yields a tight Hamilton cycle in $H$.
\end{proof}

\section{Weighted edge-colored graphs}
\label{sec:weighted-edge-colored}
In this section, we study edge-colorings of graphs in which each
edge is assigned a vector of color weights.  Let $N,L\in\N$.  A
\emph{weighted $L$-coloring} of a graph $G$ assigns to every edge
$xy\in E(G)$ a vector
\[
 \mathbf w(xy)=\bigl(w_i(xy)\bigr)_{i\in[L]}\in[0,1]^L
 \text{~with~}
\sum_{i\in[L]}w_i(xy)\le1,
\]
and $\mathbf w(xy)$ is called  the \emph{weighted $L$-coloring vector} for $xy$.
We regard $w_i(xy)$ as the weight of color $i$ on the edge $xy$.
Equivalently, it may be viewed as the probability that $xy$ receives
color $i$, with the missing mass accounting for the possibility that no
color in $[L]$ is recorded.  If the coordinates sum to one on every
edge, we call $\mathbf w$ a \emph{perfect weighted $L$-coloring}.

The edge weights induce corresponding weights on monochromatic configurations. For distinct vertices $x,y,z\in V(G)$ with $xy, xz, yz \in E(G)$, define \[ \begin{aligned} M(x,y,z) &:=\sum_{i\in[L]}w_i(xy)w_i(xz)w_i(yz),\\ K_i(x,y) &:=\frac1N\sum_{z\in[N]\setminus\{x,y\}} w_i(xz)w_i(yz) \qquad (i\in[L]). \end{aligned} \] Here $M(x,y,z)$ is the total weight of the monochromatic triangle on $x,y,z$, while $K_i(x,y)$ is the normalized total weight of the $i$-color cherries with leaves $x,y$, where a \emph{cherry} is a path of length two.

This weighted model arises naturally from the tight-component coloring
of a $3$-graph.  Each pair in the shadow is colored by the tight
component containing it.  After applying regularity, an edge of the reduced graph records the densities of the different component
colors between the corresponding clusters and therefore carries a color
vector rather than a single color.  Under this correspondence,
monochromatic triangle weights record triangles whose three pairs lie in
a common component shadow, whereas the cherry weights retain the
common-neighbor information supplied by a codegree condition.  The
purpose of this section is to turn lower bounds on these local weights
into a color that dominates the reduced graph.  In the original
$3$-graph, such a color corresponds to a dominant tight component, which
provides the main structural input for the Hamilton-framework reduction.

\subsection{Dominant colors from heavy monochromatic triangles}
\label{sec:heavy-mono-clique}

We first consider weighted $L$-colorings for which the total
monochromatic weight $M(x,y,z)$ is large on every triangle.  The main
result of this subsection shows that, once this weight exceeds a threshold
$d\ge1/3$, a single color has weight greater than $d$ on every edge of the
complete graph.  We then derive a robust form in which the triangle
condition may fail on a small proportion of triples and the selected color
is heavy on almost every edge.  When an edge is denoted by $e$, we write
$w_i(e)$ for its color-$i$ weight.  We begin with two elementary
inequalities for weighted $L$-coloring vectors.

\begin{lemma}\label{lem:subprob-product}
Let $\mathbf a=(a_i)_{i\in[L]}$, $\mathbf b=(b_i)_{i\in[L]}$, and
$\mathbf c=(c_i)_{i\in[L]}$ be weighted $L$-coloring vectors, and let
$d\in[0,1]$. If
$\sum_{i\in[L]}a_i b_i c_i>d$, then some $i\in[L]$ satisfies
$a_i,b_i,c_i>d$.
\end{lemma}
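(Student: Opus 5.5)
Assume for contradiction that no $i\in[L]$ has $a_i,b_i,c_i>d$. Equivalently, for every $i$ we have $\min\{a_i,b_i,c_i\}\le d$. The plan is to bound each summand $a_ib_ic_i$ by $d$ times a linear quantity whose total over $i$ is at most one. Then $\sum_i a_ib_ic_i\le d$, which contradicts the hypothesis.

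For each $i$, fix a coordinate achieving the minimum. Partition $[L]=I_a\cup I_b\cup I_c$ according to which of $a_i,b_i,c_i$ is at most $d$ (break ties arbitrarily).
\begin{itemize}
\item For $i\in I_a$, use $a_ib_ic_i\le d\,b_ic_i\le d\,b_i$.
\item For $i\in I_b$, use $a_ib_ic_i\le d\,a_ic_i\le d\,c_i$.
\item For $i\in I_c$, use $a_ib_ic_i\le d\,a_ib_i\le d\,a_i$.
\end{itemize}
The choice of which remaining factor to keep is made cyclically so that each index contributes a single coordinate. Summing over all $i$,
\[
 \sum_{i\in[L]}a_ib_ic_i\le d\Bigl(\sum_{i\in I_a}b_i+\sum_{i\in I_b}c_i+\sum_{i\in I_c}a_i\Bigr).
\]

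The bracketed sum is not obviously at most one. The three index sets are disjoint, but they draw on three different vectors, so the bracket is only bounded by $\sum b_i+\sum c_i+\sum a_i\le3$. The naive argument therefore only gives $3d$, and this is the main obstacle.

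To fix it, I would use the stronger product bound on the two remaining factors and not discard one of them. For $i\in I_a$ we have $b_ic_i\le\min\{b_i,c_i\}$, and similarly for the other classes. So it suffices to show
\[
 \sum_{i\in I_a}\min\{b_i,c_i\}+\sum_{i\in I_b}\min\{a_i,c_i\}+\sum_{i\in I_c}\min\{a_i,b_i\}\le1 .
\]
This still fails in general: with $L=3$ and each summand close to $1$, the left side is close to $3$. So this route must be abandoned.

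The cleaner route is to drop the partition and bound every term using the single vector $a$. For each $i$, write $a_ib_ic_i=a_i\cdot(b_ic_i)$. For $i\in I_b\cup I_c$ we have $b_ic_i\le\min\{b_i,c_i\}\le d$, so $a_ib_ic_i\le d\,a_i$. For $i\in I_a$ we instead have $a_i\le d$, so $a_ib_ic_i\le d\,b_ic_i$. Then
\[
 \sum_i a_ib_ic_i\le d\Bigl(\sum_{i\notin I_a}a_i+\sum_{i\in I_a}b_ic_i\Bigr).
\]
This again requires comparing mass across different vectors, so the single-vector bound alone does not close the argument.

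The final plan is to bound $\sum_i a_ib_ic_i$ by the concave "minimum" mass. Since each vector sums to at most one, Hölder's inequality gives $\sum_i a_ib_ic_i\le\max_i\min\{\ldots\}$ only after normalisation. The step I would try first is the following. For every $i$, $a_ib_ic_i\le m_i\cdot\bigl(a_ib_ic_i\bigr)^{2/3}m_i^{-2/3}$, where $m_i:=\min\{a_i,b_i,c_i\}\le d$. Moreover $a_ib_ic_i\le m_i\sqrt{x_iy_i}$, where $x_i,y_i$ are the other two coordinates. Applying Cauchy--Schwarz,
\[
 \sum_i m_i\sqrt{x_iy_i}\le d\sum_i\sqrt{x_iy_i}\le d\Bigl(\sum_i x_i\Bigr)^{1/2}\Bigl(\sum_i y_i\Bigr)^{1/2}.
\]
The remaining obstacle is that $x,y$ switch between vectors as $i$ varies. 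I would handle this by bounding $\sqrt{x_iy_i}\le(a_ib_i+a_ic_i+b_ic_i)$-type symmetric expressions, and then using $\sum_i\sqrt{a_ib_i}\le1$ summed over the appropriate pairs.
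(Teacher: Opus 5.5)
Your proposal does not reach a proof. Each of the four routes is either abandoned or left open, and the final Cauchy--Schwarz route cannot be completed. Its first step, $a_ib_ic_i\le m_i\sqrt{x_iy_i}$, already loses too much. Take $L=3$, $d=\tfrac14$, $\mathbf a=(\tfrac14,\tfrac38,\tfrac38)$, $\mathbf b=(\tfrac38,\tfrac14,\tfrac38)$, $\mathbf c=(\tfrac38,\tfrac38,\tfrac14)$. No index has all three coordinates exceeding $d$, yet $\sum_i m_i\sqrt{x_iy_i}=\tfrac{9}{32}>d$, so nothing that starts from that bound can yield $\le d$. The suggested patch $\sqrt{x_iy_i}\le a_ib_i+a_ic_i+b_ic_i$ is also false as written: take $m_i=0$ and $x_i=y_i=\tfrac12$. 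The same example shows that the brackets in your first two attempts can exceed one; both equal $\tfrac98$ there. Your stated counterexample for the $\min$ version is invalid, since for instance $c_1+c_2\le1$ prevents all three summands from being near $1$, but your conclusion that this route fails is correct.

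The missing idea is that your second attempt had the right reduction, and you then discarded the structure that makes it work. After partitioning $[L]=I_a\cup I_b\cup I_c$ by a minimal coordinate, keep the exact two-factor products:
\[
 \sum_{i\in[L]}a_ib_ic_i\le d\Bigl(\sum_{i\in I_a}b_ic_i+\sum_{i\in I_b}a_ic_i+\sum_{i\in I_c}a_ib_i\Bigr).
\]
Then show the bracket is at most one. This is exactly the paper's argument. Pad $\mathbf a,\mathbf b,\mathbf c$ to probability vectors using auxiliary symbols, and let $A,B,C$ be independent with these laws. The bracket is then the total probability of the events $\{B=C=i\}$ for $i\in I_a$, $\{A=C=i\}$ for $i\in I_b$, and $\{A=B=i\}$ for $i\in I_c$. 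These events are pairwise disjoint, because two of them occurring together would force one of the variables to equal two different indices (the classes $I_a,I_b,I_c$ are disjoint). In the example above the bracket is $\tfrac{27}{64}$. Replacing $b_ic_i$ by $b_i$, by $\min\{b_i,c_i\}$, or by $\sqrt{b_ic_i}$ destroys this collision-probability interpretation, which is why each of your variants fails.
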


\begin{proof}
Suppose not. Partition $[L]$ into $I_a,I_b,I_c$ by assigning each index
to one coordinate at which $\min\{a_i,b_i,c_i\}$ is attained. Since the
three coordinates cannot all exceed $d$,
\[
 \sum_{i\in[L]} a_i b_i c_i
 \le d\left(\sum_{i\in I_a}b_i c_i+
              \sum_{i\in I_b}a_i c_i+
              \sum_{i\in I_c}a_i b_i\right).
\]
Complete the three vectors to probability distributions by placing their
missing masses on three distinct auxiliary symbols, and let $A,B,C$ be
independent random variables with these distributions. The events
$\{B=C=i\}$ for $i\in I_a$, $\{A=C=i\}$ for $i\in I_b$, and
$\{A=B=i\}$ for $i\in I_c$ are pairwise disjoint. Their total probability
is the expression in parentheses and is therefore at most one, a
contradiction.
\end{proof}

\begin{lemma}\label{lem:pair-product}
Let $1/3\le d<1$ and $d<x,y,z\le1$. If
\[
 xyz+(1-x)(1-y)(1-z)>d,
\]
then $xy,xz,yz>d$ and
\[
 x+y+z>\frac{3+\sqrt{12d-3}}2.
\]
\end{lemma}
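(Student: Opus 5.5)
Both conclusions follow from two elementary bounds on $F(x,y,z):=xyz+(1-x)(1-y)(1-z)$ over the box $(d,1]^3$. Throughout, $d\ge1/3$ gives $x,y,z>1/3$.

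\emph{Pair products.} By symmetry it suffices to show $xy>d$. Since $z\le1$, we have $xyz\le xy$. Since $1-z<1-d\le 2/3$, we have $(1-x)(1-y)(1-z)<\tfrac23(1-x)(1-y)$. Hence
\[
d<F(x,y,z)\le xy+\tfrac23(1-x)(1-y).
\]
Suppose instead that $xy\le d$. Then $(1-x)(1-y)=1-x-y+xy\le 1-2\sqrt{xy}+xy=(1-\sqrt{xy})^2$. Writing $p:=\sqrt{xy}\in(d,\sqrt d\,]$, this would give
\[
d<p^2+\tfrac23(1-p)^2 .
\]
The right-hand side is convex in $p$, so it suffices to check the endpoints. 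At $p=\sqrt d$ it equals $d+\tfrac23(1-\sqrt d)^2>d$, so this naive bound does not close the argument. The estimate is too lossy, and I will need to use $z>d$ more carefully.

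Instead, I fix $x,y$ and note that $F$ is affine in $z$, so $F\le\max\{F(x,y,1),F(x,y,d)\}$. Taking $z=1$ gives $F=xy$, which is at most $d$ under the assumption. Taking $z=d$ gives
\[
F=d\,xy+(1-d)(1-x)(1-y).
\]
The same affine reduction in $x$ and then in $y$ shows that the worst case lies at a vertex of $[d,1]^2$, subject to $xy\le d$. The vertex $(1,1)$ is excluded since $d<1$. The vertices $(1,d)$ and $(d,1)$ give $F=d^2<d$. The vertex $(d,d)$ gives $F=d^3+(1-d)^3$, and I need $d^3+(1-d)^3\le d$ for $d\ge1/3$. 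Writing $g(d):=d-d^3-(1-d)^3=-1+4d-3d^2=(3d-1)(1-d)$, we get $g\ge0$, which is exactly where $d\ge1/3$ is used.

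There is a gap here: the constraint $xy\le d$ cuts the box by a curve, so a vertex enumeration is not literally valid. I will handle this by maximizing over the region $\{xy\le d\}$ using the fact that $F(\cdot,\cdot,d)$ is bilinear. On the boundary curve $xy=d$, substitute $y=d/x$ and check that the resulting one-variable function stays at most $d$. I expect this check to be the main obstacle and will verify it by direct calculus.

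\emph{Sum bound.} Put $s:=x+y+z$. By the pair-product part, every pairwise product is at least $d$. Write $F=1-s+P_2+2P_3$, where $P_2:=xy+yz+zx$ and $P_3:=xyz$. For fixed $s$, I plan to maximize $F$ with Lagrange multipliers or a smoothing argument. Pushing two coordinates together should show that the maximum of $F$ at fixed $s$ is attained on the diagonal $x=y=z=s/3$, using convexity in the region where all coordinates exceed $1/3$. Then $d<F$ implies $d<t^3+(1-t)^3=1-3t+3t^2$ with $t=s/3$. Solving, $t>\bigl(3+\sqrt{12d-3}\bigr)/6$, which gives
\[
s>\frac{3+\sqrt{12d-3}}2 .
\]
The $t$ on the smaller root side is excluded because $t>d\ge1/3$.

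The main obstacle is justifying that equalizing coordinates does not decrease $F$. Replacing $(x,y)$ by the pair with the same sum changes $F$ by $(z-(1-z))\,\delta=(2z-1)\delta$, where $\delta$ is the increase in $xy$. This change is nonnegative only when $z\ge1/2$. So I will smooth pairs chosen according to which coordinates exceed $1/2$, and treat separately the case where two coordinates lie below $1/2$ using the product bound $xy>d$.
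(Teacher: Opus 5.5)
There is a genuine gap: neither part is finished as written, and the second part rests on an algebra error.

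\textbf{Pair products.} Your reduction to $z\in\{d,1\}$ is the same as the paper's. What remains is the inequality $g(x,y):=dxy+(1-d)(1-x)(1-y)\le d$ on the curved region $\{x,y\in[d,1]:xy\le d\}$, and you leave this open. As you note yourself, vertex enumeration does not prove it. The paper closes it in one line. Since $1-xy>0$, the inequality is equivalent to $\frac{(1-x)(1-y)}{1-xy}\le\frac{d}{1-d}$. The left side is decreasing in each variable: its logarithmic $x$-derivative is $-\frac{1-y}{(1-x)(1-xy)}\le 0$. So it is at most its value $\frac{1-d}{1+d}$ at $(d,d)$, and $\frac{1-d}{1+d}\le\frac{d}{1-d}$ holds exactly when $d\ge 1/3$.

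Your own route can also be completed. For fixed $x$, the function $g$ is affine in $y\in[d,d/x]$, so its maximum lies either on the edge $y=d$ or on the curve $y=d/x$. On the edge, your endpoint values $d^3+(1-d)^3$ and $d^2$ suffice. On the curve, $(1-x)(1-d/x)=1+d-x-d/x\le(1-\sqrt d)^2$, so $g\le d^2+(1-d)(1-\sqrt d)^2\le d$ because $d\ge 1/4$. Either way, this step has to be carried out before the pair-product claim is proved.

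\textbf{Sum bound.} Your expansion $F=1-s+P_2+2P_3$ is wrong. Since $(1-x)(1-y)(1-z)=1-s+P_2-P_3$, the cubic terms cancel and $F=1-s+P_2$ exactly. Your smoothing increment is also wrong. With $x+y$ and $z$ fixed, $xyz$ grows by $z\delta$ and $(1-x)(1-y)(1-z)=\bigl(1-(x+y)+xy\bigr)(1-z)$ grows by $(1-z)\delta$. So $F$ changes by $\delta\ge0$, not by $(2z-1)\delta$.

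The obstacle you planned to handle with a case split on coordinates below $1/2$ is therefore an artifact of these errors. The correct argument is direct, and it is the paper's. From $P_2\le s^2/3$ you get $d<1-s+s^2/3$, that is, $s^2-3s+3-3d>0$. Since $s>3d\ge\frac{3-\sqrt{12d-3}}{2}$ (the last step is equivalent to $(3d-1)(1-d)\ge 0$), $s$ must exceed the larger root $\frac{3+\sqrt{12d-3}}{2}$. This uses only $s>3d$, not the pair-product conclusion you invoke.
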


\begin{proof}
Suppose, without loss of generality, that $xy\le d$. For fixed $x,y$, the
left-hand side of the hypothesis is affine in $z$, so its maximum on
$[d,1]$ occurs at an endpoint. At $z=1$ it equals $xy\le d$. At $z=d$,
\[
 \frac{(1-x)(1-y)}{1-xy}
 \le\frac{(1-d)^2}{1-d^2}
 =\frac{1-d}{1+d}
 \le\frac d{1-d},
\]
where the last inequality uses $d\ge1/3$. Hence
$dxy+(1-d)(1-x)(1-y)\le d$, again a contradiction. Thus $xy>d$, and the
other two inequalities follow by symmetry.

Put $S:=x+y+z$. Since
$1-S+xy+xz+yz>d$ and $xy+xz+yz\le S^2/3$, we have
$S^2-3S+3-3d>0$. The roots of the corresponding quadratic are
$(3\pm\sqrt{12d-3})/2$. Since $S>3d$ and the smaller root is at most
$3d$, $S$ exceeds the larger root.
\end{proof}

\begin{lemma}[Weighted monochromatic clique]\label{thm:weighted-clique}
Let $N\ge4$, $L\ge1$, and $d\in[1/3,1)$, and let $\mathbf w$ be a
weighted $L$-coloring of $K_N$. Suppose that
$M(x,y,z)>d$ for every three distinct vertices $x,y,z\in[N]$. Then some
color $c\in[L]$ satisfies $w_c(e)>d$ for every $e\in E(K_N)$.
\end{lemma}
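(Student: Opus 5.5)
The plan is to first attach to every triangle a dominant color, and to record what that color forces. Fix three distinct vertices $x,y,z$. Since $M(x,y,z)>d$, \cref{lem:subprob-product} applied to $\mathbf w(xy),\mathbf w(xz),\mathbf w(yz)$ gives a color $i$ with $w_i(xy),w_i(xz),w_i(yz)>d$. Write $a,b,c$ for these three weights. The remaining colors satisfy
\[
\sum_{k\ne i}w_k(xy)w_k(xz)w_k(yz)\le\Bigl(\sum_{k\ne i}w_k(xy)\Bigr)\Bigl(\sum_{k\ne i}w_k(xz)\Bigr)\Bigl(\sum_{k\ne i}w_k(yz)\Bigr)\le(1-a)(1-b)(1-c).
\]
Hence $abc+(1-a)(1-b)(1-c)>d$, and \cref{lem:pair-product} yields $a+b+c>\frac{3+\sqrt{12d-3}}2\ge\frac32$. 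Call such an $i$ a \emph{dominant color} of the triangle.

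A triangle has only one dominant color. If $i\neq j$ both satisfied $\sum_{e}w_i(e)>3/2$ over the three edges $e$ of the triangle, then $\sum_e\bigl(w_i(e)+w_j(e)\bigr)>3$. This is impossible, because each edge carries total weight at most $1$. So every triangle $T$ has a well-defined color $c(T)$. It is heavy, meaning weight greater than $d$, on all three edges of $T$, and its total weight on $T$ exceeds $3/2$.

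The core step is a $K_4$ lemma: for any four vertices $x,y,z,w$, the four triangles they span all have the same dominant color. Given this, the conclusion follows quickly. Two triangles sharing an edge lie in a common $4$-set, so they get the same color. Any two triangles in $K_N$ with $N\ge4$ are linked by a chain of triangles, consecutive ones sharing an edge; for example, replace one vertex at a time. Thus a single color $c$ dominates every triangle. Every edge lies in some triangle, so $w_c(e)>d$ for all $e\in E(K_N)$.

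To prove the $K_4$ lemma, I would argue by contradiction through weight counting. Suppose $c(xyz)=1$ and $c(xyw)=2$. The shared edge $xy$ contributes at most $1$ to $w_1+w_2$. Hence
\[
w_1(xz)+w_1(yz)+w_2(xw)+w_2(yw)>2,
\]
while each of these four terms exceeds $d$. Now let $k=c(xzw)$; it is heavy on $xz$ and on $xw$, with total weight exceeding $3/2$ on $xzw$. Similarly let $k'=c(yzw)$. I would split into cases according to whether $k,k'\in\{1,2\}$ or a new color appears.

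In each case I would sum the relevant dominance inequalities over the triangles involved and compare with the capacity bound: every edge has total weight at most $1$ across all colors. The aim is to show that the total forced weight exceeds the number of edges available. For instance, if $k=1$, then $w_2(xw)$ and $w_1(xw)$ are both positive and constrained. Adding the dominance sums over $xyz$, $xzw$ and $xyw$ gives more than $9/2$, spread over edges whose capacity is shared. The case where all four triangles have pairwise distinct colors, which can occur only when $d<1/2$, should also fall to this counting. The six edges carry total weight at most $6$, but the four triangles demand more than $4\cdot\frac32=6$ in colors that must be distinct on each shared edge.

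I expect this case analysis to be the main obstacle. The crude bound $3/2$ may not suffice in the mixed cases. There I would fall back on the sharper bound $\frac{3+\sqrt{12d-3}}2$ and on the pairwise product bounds $w_i(e)w_i(e')>d$ from \cref{lem:pair-product}, using $d\ge1/3$.
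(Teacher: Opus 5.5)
Your reduction is sound. Any color heavy on all three edges of a triangle is dominant, and the dominant color is unique. The edge-sharing chain argument then correctly derives the lemma from the claim that the four faces of every $K_4$ have the same dominant color; this chain argument is a valid substitute for the paper's induction on $N$.

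\textbf{The gap.} That $K_4$ claim carries all the content, and you only plan it. The counting you actually set up cannot prove it: with the bound $3/2$, only the four-distinct-colors case closes. For the $2+2$ pattern, take $w_1(xy)=w_2(zw)=1$ and $w_1(e)=w_2(e)=\tfrac12$ for $e\in\{xz,yz,xw,yw\}$. Then faces $xyz,xyw$ have dominant color $1$ and faces $xzw,yzw$ have dominant color $2$. Every face sum equals $2>3/2$, all relevant weights exceed $d$ when $d<1/2$, and capacity holds. Only the hypothesis itself fails, since $M(x,y,z)=\tfrac14$. Similarly, in your case $k=1$ the three-face total $>9/2$ is well below the available capacity $7$, so it gives no contradiction.

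\textbf{The missing observation.} The fallback bound you mention is already decisive. For $d\ge1/3$ one has $\sqrt{12d-3}\ge1$, so every face sum exceeds $2$, not merely $3/2$. Summing over the four faces gives a total greater than $8$. Each edge lies in exactly two faces. It contributes at most $1$ if their dominant colors differ and at most $2$ if they coincide. So the total is at most $6+p$, where $p$ is the number of pairs of faces with equal dominant color. Hence $p\ge3$, which leaves only the patterns $4$ and $3+1$.

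In the $3+1$ pattern, every edge lies in one of the three faces sharing the repeated color, so that color is heavy on every edge. In particular it is heavy on all edges of the fourth face, and uniqueness forces it to be that face's dominant color, a contradiction.

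\textbf{Comparison with the paper.} With this filled in, your proof is complete and purely additive. The paper instead multiplies the pairwise bounds $w_c(e)w_c(f)>d$ across faces and uses $w_p(e)w_q(e)\le\tfrac14<d$ to exclude the patterns $2+2$, $2+1+1$ and $1+1+1+1$. It then inducts on $N$, using that an edge has at most two heavy colors.
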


\begin{proof}
For each edge $e\in E(K_N)$, let 
$D_e:=\{i\in[L]:w_e(i)>d\}$ be the heavy-color set of $e$. Note that every $D_e$ has size at
most two. By \cref{lem:subprob-product}, every triangle $T$
has a color that is heavy on all three of its edges; we call any such color
a \emph{witness} for $T$.

Let $c$ be a witness for a triangle $T$ with edges $e_1,e_2,e_3$, and write
$x_j:=a_{e_j}(c)$ for $j\in[3]$. The contribution of the colors other than
$c$ is bounded by
\[
 \sum_{i\in[L]\setminus\{c\}}\prod_{j=1}^3a_{e_j}(i)
 \le\prod_{j=1}^3\sum_{i\in[L]\setminus\{c\}}a_{e_j}(i)
 \le\prod_{j=1}^3(1-x_j).
\]
Consequently,
\[
 x_1x_2x_3+(1-x_1)(1-x_2)(1-x_3)>d,
\]
and \cref{lem:pair-product} yields
\begin{equation}\label{eq:witness-pair-product}
 a_e(c)a_f(c)>d
 \qquad\text{for every two distinct edges }e,f\in E(T).
\end{equation}

We first prove the theorem for $K_4$. Label its vertices by $[4]$. For each
$j\in[4]$, let $F_j$ be the triangular face induced by
$[4]\setminus\{j\}$, and choose a witness color $c_j$ for $F_j$. For
$1\le j<k\le4$, let $e_{jk}$ denote the unique edge in
$E(F_j)\cap E(F_k)$. We shall repeatedly use the bound
\begin{equation}\label{eq:two-color-edge-bound}
 a_e(p)a_e(q)
 \le\frac{(a_e(p)+a_e(q))^2}{4}
 \le\frac14
 \qquad\text{whenever }p\ne q.
\end{equation}

If one color occurs at least three times among $c_1,c_2,c_3,c_4$, then every
edge of $K_4$ lies in at least one of the corresponding three faces and is
heavy in that color. We may therefore assume that no color occurs more than
twice. There are three possible multiplicity patterns.

\smallskip
\noindent\emph{Case 1: the pattern is $2+2$.}
Suppose that $c_1=c_2=a$ and $c_3=c_4=b$, where $a\ne b$. Applying
\eqref{eq:witness-pair-product} in the four faces gives
\[
 \begin{aligned}
 a_{e_{13}}(a)a_{e_{14}}(a)&>d,
 &\qquad a_{e_{23}}(a)a_{e_{24}}(a)&>d,\\
 a_{e_{13}}(b)a_{e_{23}}(b)&>d,
 & a_{e_{14}}(b)a_{e_{24}}(b)&>d.
 \end{aligned}
\]
Multiplying these inequalities and using
\eqref{eq:two-color-edge-bound} on each of the four displayed edges yields
\[
 d^4
 <\prod_{e\in\{e_{13},e_{14},e_{23},e_{24}\}}
   a_e(a)a_e(b)
 \le4^{-4},
\]
which is impossible because $d\ge1/3>1/4$.

\smallskip
\noindent\emph{Case 2: the pattern is $2+1+1$.}
Suppose that $c_1=c_2=a$, $c_3=b$, and $c_4=c$, where $a,b,c$ are
pairwise distinct. This time \eqref{eq:witness-pair-product} gives
\[
 \begin{aligned}
 a_{e_{13}}(a)a_{e_{14}}(a)&>d,
 &\qquad a_{e_{23}}(a)a_{e_{24}}(a)&>d,\\
 a_{e_{13}}(b)a_{e_{23}}(b)&>d,
 & a_{e_{14}}(c)a_{e_{24}}(c)&>d.
 \end{aligned}
\]
After multiplication, each of the four edges carries a product of two
distinct color weights. Hence \eqref{eq:two-color-edge-bound} again gives
$d^4<4^{-4}$, a contradiction.

\smallskip
\noindent\emph{Case 3: the pattern is $1+1+1+1$.}
Assume that $c_1,c_2,c_3,c_4$ are pairwise distinct. For each $j\in[4]$,
multiplying the three instances of \eqref{eq:witness-pair-product}
corresponding to the three pairs of edges in $F_j$ gives
\[
 \left(\prod_{e\in E(F_j)}a_e(c_j)\right)^2>d^3,
\]
and therefore
\[
 \prod_{e\in E(F_j)}a_e(c_j)>d^{3/2}.
\]
Multiplying over all four faces, and observing that the edge $e_{jk}$ is
contained precisely in $F_j$ and $F_k$, we obtain
\[
 d^6
 <\prod_{1\le j<k\le4}
   a_{e_{jk}}(c_j)a_{e_{jk}}(c_k)
 \le4^{-6}
\]
by \eqref{eq:two-color-edge-bound}. This is again impossible because
$d\ge1/3>1/4$. Thus the theorem holds for $K_4$.

We now proceed by induction on $m$. Let $m\ge5$ and assume that the result
holds for $K_{m-1}$. For every vertex $v\in V(K_m)$, the restriction to
$K_m-v$ satisfies the same triangle hypothesis, so there is a color $c_v$
that is heavy on every edge of $K_m-v$.

At most two distinct colors occur among the colors $c_v$. Indeed, if three
vertices $u,v,w$ had pairwise distinct colors $c_u,c_v,c_w$, then, since
$m\ge5$, there would be an edge disjoint from $\{u,v,w\}$. That edge would
be heavy in all three colors, contradicting the fact that an edge has at
most two heavy colors. Hence some color $c$ occurs as $c_v$ for at least
three vertices. Every edge of $K_m$ avoids at least one of these vertices
and is therefore heavy in color $c$. This completes the induction.
\end{proof}

For later applications, we need the following finite robust form, in which
a small exceptional set of triangles is allowed.

\begin{theorem}[Robust weighted clique selection]
\label{cor:robust-weighted-clique}
For every $t\in[1/3,1)$ and $\delta>0$, there exist $\xi>0$ and
$N_0\in\N$ such that the following holds for every $N\ge N_0$ and every
$L\in\N$. Let $\mathbf w$ be a weighted $L$-coloring of $K_N$. If all but
at most $\xi N^3$ triples $xyz\in\binom{[N]}3$ satisfy
$M(x,y,z)>t$, then some color $c\in[L]$ satisfies $w_c(e)>t$ on at least $(1-\delta)\binom N2$ edges.
\end{theorem}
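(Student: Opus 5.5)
We plan to derive the robust statement from \cref{thm:weighted-clique} by sampling a bounded-size random vertex set, with a union bound over colors weighted by how often each color is heavy. The bound must not depend on $L$. For each edge $e$ let $D_e:=\{i\in[L]:w_i(e)>t\}$. Since the coordinates of $\mathbf w(e)$ sum to at most one and $t\ge1/3$, we have $|D_e|\le2$. For each color $c$, let $p_c$ be the fraction of edges of $K_N$ with $c\in D_e$. Then $\sum_c p_c\le2$.

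Assume for contradiction that $p_c<1-\delta$ for every color $c$. Fix an even integer $K\ge4$ large enough that $2(1-\delta/2)^{K/2-1}<1/4$. Then choose $\xi>0$ with $\xi K^3<1/4$, and finally choose $N_0$ large in terms of $K$ and $\delta$.

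Let $S\subseteq[N]$ be a uniformly random $K$-set, and expose it as $K/2$ disjoint pairs $f_1,\dots,f_{K/2}$ chosen one after another, each uniformly among the pairs avoiding the vertices already used.

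\textbf{Step 1: bad triples.} The probability that $S$ contains a triple with $M\le t$ is at most
\[
\xi N^3\binom{N-3}{K-3}\Big/\binom NK\le\xi K^3<1/4 .
\]

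\textbf{Step 2: one color.} Fix a color $c$. At each step, conditional on the previous choices, the probability that $f_j$ is heavy in $c$ is at most
\[
\frac{p_c\binom N2}{\binom{N-K}2}\le p_c+O\!\left(\frac KN\right).
\]
Hence $c$ is heavy on all of $f_1,\dots,f_{K/2}$ with probability at most $(p_c+O(K/N))^{K/2}$.

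\textbf{Step 3: union over colors.} Summing over colors needs care because $L$ is unbounded. I would bound each term by $p_c+O(K/N)$ times the $(K/2-1)$-st power of the maximum. Colors with $p_c<K/N$ need a separate treatment, but this is harmless: a color heavy on all of $S$ must in particular be heavy on $f_1$. So the total probability is at most
\[
\sum_c\Pr[c\in D_{f_1}]\,\max_c\bigl(p_c+O(K/N)\bigr)^{K/2-1}\le\bigl(2+O(K/N)\bigr)\,(1-\delta/2)^{K/2-1}<1/4
\]
for $N\ge N_0$.

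\textbf{Conclusion.} With positive probability, $S$ contains no bad triple and no color is heavy on every pair of $S$. But on such an $S$ the restricted coloring satisfies $M(x,y,z)>t$ for all triples, and $|S|=K\ge4$. So \cref{thm:weighted-clique} gives a color heavy on every edge of $K[S]$, a contradiction. Therefore some $c$ has $p_c\ge1-\delta$, as required.

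The main obstacle is the summation over an unbounded number of colors. The approach handles it through the weighting $\sum_c p_c\le2$, which in turn relies on $|D_e|\le2$, together with the near-independence of the disjoint sampled pairs.
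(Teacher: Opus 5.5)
Your proof is correct and has the same skeleton as the paper's. Both sample a bounded-size vertex set and apply \cref{thm:weighted-clique} to it. Both handle the union over unboundedly many colors by writing each color's contribution as $p_c$ times the $(K/2-1)$-st power of the maximum density, then using $\sum_c p_c\le 2$, which comes from $|D_e|\le 2$.

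The one real difference is how you bound the probability that color $c$ is heavy on every pair of $S$. The paper counts copies of $K_s$ in the heavy graph $G_c$ using the Lov\'asz form of Kruskal--Katona, which gives $\#K_s(G_c)/\binom Ns<e\,q_c^{s/2}$. You instead expose a perfect matching of $S$ pair by pair, which gives $p_c\bigl(p_c\binom N2/\binom{N-K}2\bigr)^{K/2-1}$. Both routes yield the same $q^{s/2}$ decay.

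Your route is self-contained: the paper's preliminaries only state the triangle case of Kruskal--Katona, while its proof actually uses the general $K_s$ version. The cost is a finite-$N$ correction factor. You handle it correctly by using the exact value $\Pr[c\in D_{f_1}]=p_c$ for the first factor, so the correction is never summed over all colors.
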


\begin{proof}
We may assume that $0<\delta<1$, since the conclusion is trivial when
$\delta\ge1$. Choose an integer $s\ge4$ so large that $2e(1-\delta)^{s/2-1}<\frac12$.
Choose $\xi>0$ sufficiently small that $2\xi s^3<1/4$, and then choose
$N_0\ge s$ sufficiently large.

Call a triple \emph{exceptional} if its monochromatic weight is at most
$t$. A uniformly random $s$-set contains an exceptional triangle with
probability at most
\[
 \xi N^3\frac{\binom{N-3}{s-3}}{\binom Ns}
 \le2\xi s^3<\frac14.
\]
Thus at least three quarters of the $s$-sets contain no exceptional
triangle.

For every such $s$-set $S$, the restriction of $\mathbf w$ to
$K_N[S]$ satisfies
\[
 M(x,y,z)>t
 \qquad\text{for every }xyz\in\binom S3.
\]
By \cref{thm:weighted-clique}, there is a color $c=c(S)\in[L]$ such
that
\[
 w_c(e)>t
 \qquad\text{for every }e\in\binom S2.
\]

For each $i\in[L]$, let $G_i$ be the graph on $[N]$ whose edges are
precisely those $e$ satisfying $w_i(e)>t$, and let $q_i:=\frac{e(G_i)}{\binom N2}$.
The preceding paragraph shows that every nonexceptional $s$-set spans
a copy of $K_s$ in at least one of the graphs $G_i$. Hence
\begin{equation}\label{eq:robust-clique-cover}
 \frac34
 \le
 \sum_{i\in[L]}\frac{\#K_s(G_i)}{\binom Ns}.
\end{equation}

Since $t\ge1/3$ and the color vector on every edge is a subprobability
vector, every edge belongs to at most two of the graphs $G_i$.
Consequently,
\begin{equation}\label{eq:robust-color-mass}
 \sum_{i\in[L]}q_i\le2.
\end{equation}

For each $i\in[L]$, let $x_i\ge1$ be defined by
\[
 \binom{x_i}{2}=e(G_i).
\]
If $x_i<s$, then $G_i$ contains no copy of $K_s$. If $x_i\ge s$, the
Lov\'asz form of the Kruskal--Katona theorem gives
\[
 \#K_s(G_i)\le\binom{x_i}{s}.
\]
Moreover,
\[
 q_i=\frac{x_i(x_i-1)}{N(N-1)}.
\]
Since $s\le x_i\le N$,
\[
 \begin{split}
 \frac{\binom{x_i}{s}}{\binom Ns}
 &=
 \prod_{j=0}^{s-1}\frac{x_i-j}{N-j}
 \le\left(\frac{x_i}{N}\right)^s\\
 &\le
 \left(\frac{s}{s-1}\right)^{s/2}q_i^{s/2}
 <e\,q_i^{s/2}.
 \end{split}
\]

Suppose that $q_i\le1-\delta$ for every $i\in[L]$. Then
\[
 \begin{split}
 \sum_{i\in[L]}\frac{\#K_s(G_i)}{\binom Ns}
 &\le e\sum_{i\in[L]}q_i^{s/2}\\
 &\le e(1-\delta)^{s/2-1}\sum_{i\in[L]}q_i\\
 &\le2e(1-\delta)^{s/2-1}
 <\frac12,
 \end{split}
\]
contradicting \eqref{eq:robust-clique-cover}. Therefore
$q_i>1-\delta$ for some $i\in[L]$, which means that
$w_i(e)>t$ on more than $(1-\delta)\binom N2$ edges.
\end{proof}

We prove \cref{lem:codeg-dominant,lem:shadow-consistency}. Throughout this
section, $\kappa$ is the unique solution of $\kappa=(1-\kappa)^3$ in
$(1/4,1/3)$.

\subsection{Two colors from light monochromatic triangles}
\label{sec:light-mono-clique}

We first collect three auxiliary lemmas used in the proof of
\cref{lem:codeg-dominant}.

For $a,b,c\in[0,1]$, let
$\Phi(a,b,c):=abc+(1-a)(1-b)(1-c)$.
The following lemma shows a stability property of
$\Phi$ around $\kappa$.

\begin{lemma}
\label{lemma:kappa-inequality}
Let $\kappa<t<d<1$. For all $a,b,c\in[0,1]$, if
$a\le t$ and $\Phi(a,b,c)\ge d$, then $bc<\kappa$.
\end{lemma}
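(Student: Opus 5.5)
The plan is a direct calculation. I will use that $\Phi$ is affine in each variable, and that a product $bc\ge\kappa$ forces $(1-b)(1-c)$ to be small. Suppose, for contradiction, that $a\le t$, $\Phi(a,b,c)\ge d$ and $bc\ge\kappa$. Write
\[
 \Phi(a,b,c)=(1-b)(1-c)+a\bigl(b+c-1\bigr),
\]
which is affine in $a$. Its maximum over $a\in[0,t]$ is therefore attained at $a=0$ or at $a=t$. I bound both endpoint values strictly below $d$.

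The key auxiliary estimate comes from AM--GM, $b+c\ge2\sqrt{bc}$. Put $p:=\sqrt{bc}\in[\sqrt\kappa,1]$. Then
\[
 (1-b)(1-c)=1-(b+c)+bc\le(1-p)^2 .
\]
Since $\kappa=(1-\kappa)^3$ lies in $(1/4,1/3)$, we have $\sqrt\kappa>1/2$.

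\emph{Endpoint $a=0$.} Here $\Phi(0,b,c)=(1-b)(1-c)\le(1-p)^2\le(1-\sqrt\kappa)^2$. Because $\sqrt\kappa>1/2$, this gives $(1-\sqrt\kappa)^2<1/4<\kappa<d$.

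\emph{Endpoint $a=t$.} Here
\[
 \Phi(t,b,c)=t\,bc+(1-t)(1-b)(1-c)\le g(p):=tp^2+(1-t)(1-p)^2 .
\]
The function $g$ is convex in $p$, so on $[\sqrt\kappa,1]$ it is maximized at an endpoint.
\begin{itemize}
\item At $p=1$ we get $g(1)=t<d$.
\item At $p=\sqrt\kappa$ it suffices to show $g(\sqrt\kappa)\le t$, i.e.\ $(1-t)(1-\sqrt\kappa)^2\le t(1-\kappa)$. The left side decreases in $t$ and the right side increases in $t$. So it is enough to check $t=\kappa$, where the inequality becomes $(1-\sqrt\kappa)^2\le\kappa$, equivalently $\sqrt\kappa\ge1/2$. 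This holds because $\kappa>1/4$.
\end{itemize}
Hence $g(\sqrt\kappa)\le t<d$.

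Combining the two endpoints gives $\Phi(a,b,c)<d$, a contradiction; therefore $bc<\kappa$. The only delicate point is the inequality $(1-\sqrt\kappa)^2\le\kappa$, which is exactly where $\kappa>1/4$ is used. This step is routine once the reduction to the single variable $p=\sqrt{bc}$ is made, so I do not expect a genuine obstacle.
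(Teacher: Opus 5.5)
Your proof is correct and follows essentially the same route as the paper. Both arguments reduce to the endpoints $a\in\{0,t\}$ by affineness, apply the AM--GM bound $(1-b)(1-c)\le(1-\sqrt{bc})^2$, and maximize the convex function $g$ on $[\sqrt\kappa,1]$, using $\kappa>1/4$ to get $(1-\sqrt\kappa)^2\le\kappa$. The only cosmetic difference is that the paper substitutes directly to get $g(\sqrt\kappa)\le t\kappa+(1-t)\kappa=\kappa<t$, instead of using your monotonicity-in-$t$ check.
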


\begin{proof}
We prove the contrapositive. Suppose that $a\le t$ and
$bc\ge\kappa$. For fixed $b,c$, the function
$a\mapsto\Phi(a,b,c)$ is affine. Since $a\in[0,t]$, its maximum on
this interval is attained at $a=0$ or $a=t$.

Put $x:=\sqrt{bc}$. Then $x\in[\sqrt\kappa,1]$, and
$b+c\ge2x$ gives
\[
 (1-b)(1-c)=1-b-c+bc\le(1-x)^2.
\]
Recall that $\kappa>1/4$, and hence
$(1-\sqrt\kappa)^2<\kappa$.

At $a=0$, we therefore have
$\Phi(0,b,c)\le(1-\sqrt\kappa)^2<\kappa<t$. At $a=t$, since
$t<1$,
\[
 \Phi(t,b,c)\le g(x):=tx^2+(1-t)(1-x)^2.
\]
The function $g$ is convex, so its maximum on
$[\sqrt\kappa,1]$ is attained at an endpoint. Moreover,
$g(1)=t$, while
\[
 g(\sqrt\kappa)
 =t\kappa+(1-t)(1-\sqrt\kappa)^2
 \le t\kappa+(1-t)\kappa
 =\kappa<t.
\]
Thus $\Phi(t,b,c)\le t$.

Both endpoint values of the affine function
$a\mapsto\Phi(a,b,c)$ are therefore at most $t$, and hence
$\Phi(a,b,c)\le t<d$. Consequently, $\Phi(a,b,c)\ge d$ is
impossible when $a\le t$ and $bc\ge\kappa$, which proves that
$bc<\kappa$.
\end{proof}

We also use the following elementary set-system fact.

\begin{lemma}[Three-wise intersecting families of rank at most three]
\label{lem:rank-three}
Let $\mathcal A$ be a finite nonempty family of sets, each of size at
most three, such that
\[
 A\cap B\cap C\ne\varnothing
\]
for all $A,B,C\in\mathcal A$, where the three members are not required
to be distinct. Then either
$\bigcap_{A\in\mathcal A}A\ne\varnothing$, or there are four distinct
elements $c_1,c_2,c_3,c_4$ such that, writing
$Q:=\{c_1,c_2,c_3,c_4\}$, every member of $\mathcal A$ is of the form
$Q\setminus\{c_i\}$ for some $i\in[4]$, and all four possibilities occur.
\end{lemma}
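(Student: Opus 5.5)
The plan is a direct case analysis on the sizes of the members of $\mathcal A$, assuming throughout that $\bigcap_{A\in\mathcal A}A=\varnothing$ and deriving the four-element structure. First I record the consequences of allowing repeated members. Taking $A=B=C$ shows that every member is nonempty. Taking $B=C$ shows that any two members intersect.

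\emph{Step 1: every member has size three.} If some member is a singleton $\{x\}$, then $A\cap A\cap\{x\}\ne\varnothing$ forces $x\in A$ for every $A$. So $x$ is a common element, which contradicts our assumption. Now suppose some member is $A=\{x,y\}$. Since there is no common element, we can pick $B\in\mathcal A$ with $x\notin B$ and $C\in\mathcal A$ with $y\notin C$. Then $A\cap B\cap C\subseteq\{x,y\}\setminus\{x,y\}=\varnothing$, a contradiction. Hence all members have size exactly three.

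\emph{Step 2: constructing $Q$.} Fix $A=\{x,y,z\}\in\mathcal A$. Since no element is common to all members, there is a member avoiding each element of $A$.
\begin{enumerate}
 \item Let $B$ avoid $x$. If $|A\cap B|=1$, say $A\cap B=\{y\}$, then a member $C$ avoiding $y$ gives $A\cap B\cap C=\varnothing$. The case $A\cap B=\{z\}$ is symmetric. Hence $A\cap B=\{y,z\}$, and $B=\{y,z,w\}$ for some $w\notin A$.
 \item In the same way, a member $C$ avoiding $y$ has the form $\{x,z,w'\}$, and a member $D$ avoiding $z$ has the form $\{x,y,w''\}$, with $w',w''\notin A$.
 \item Consider $B\cap C\cap D$. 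We have $z\notin D$, $x\notin B$ and $y\notin C$, so the only possible common element is some element outside $A$. Nonemptiness therefore forces $w=w'=w''$.
\end{enumerate}
Set $Q:=\{x,y,z,w\}$. Then $A,B,C,D$ are exactly the four sets $Q\setminus\{c\}$ with $c\in Q$, so all four possibilities occur.

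\emph{Step 3: every member lies in this family.} Let $E\in\mathcal A$ be arbitrary. For distinct $c,c'\in Q$, apply the hypothesis to $E$, $Q\setminus\{c\}$ and $Q\setminus\{c'\}$. This gives
\[
 E\cap\bigl(Q\setminus\{c,c'\}\bigr)\ne\varnothing .
\]
So $E$ meets every $2$-subset of $Q$, and therefore $|E\cap Q|\ge3$. Since $|E|=3$, we get $E=Q\setminus\{c\}$ for some $c\in Q$. This completes the argument.

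No step is a genuine obstacle. The only delicate points are remembering that repetitions are allowed, which excludes small sets in Step 1, and the identification $w=w'=w''$ in Step 2.
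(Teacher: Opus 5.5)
Your proof is correct. Its final step, where every member meets each $2$-subset of $Q$ and is therefore some $Q\setminus\{c\}$, is the same as the paper's. You locate $Q$ by a different route, however.

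\textbf{The paper's route.} It takes an inclusion-minimal subfamily $A_1,\ldots,A_r$ with empty intersection. Three-wise intersection forces $r\ge4$. Minimality supplies distinct witnesses $c_j\in\bigcap_{k\ne j}A_k\setminus A_j$. Since each $A_k$ contains all $c_j$ with $j\ne k$, the bound $r-1\le|A_k|\le3$ gives $r=4$ and $A_k=Q\setminus\{c_k\}$ at once. No preliminary reduction on sizes is needed; that all members have size three only emerges at the end.

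\textbf{Your route.} You first rule out members of size one or two. You then fix a $3$-set $A=\{x,y,z\}$ and show that the members avoiding $x$, $y$, $z$ must be $\{y,z,w\}$, $\{x,z,w'\}$, $\{x,y,w''\}$. The nonempty triple intersection of these three forces $w=w'=w''$. All these steps check out, including the observation that the common element cannot lie in $A$.

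\textbf{Comparison.} Your argument is concrete and easy to verify line by line, but it is tailored to rank three. The case analysis would grow for larger rank. The paper's Helly-type minimal-subfamily argument is shorter and generalizes verbatim: $k$-wise intersecting families of sets of size at most $k$ either have a common element or consist of the $k$-subsets $Q\setminus\{c\}$ of a $(k+1)$-set $Q$, all of which occur.
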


\begin{proof}
Assume that $\bigcap_{A\in\mathcal A}A=\varnothing$, and choose an
inclusion-minimal subfamily $A_1,\ldots,A_r$ with empty intersection.
Since repetitions are allowed in the hypothesis, every subfamily of at
most three members has nonempty intersection, and hence $r\ge4$.

For each $j\in[r]$, minimality gives
$\bigcap_{k\ne j}A_k\ne\varnothing$. Since the intersection of all
$A_k$ is empty, we may choose
\[
 c_j\in\bigcap_{k\ne j}A_k\setminus A_j.
\]
These elements are distinct: if $i\ne j$, then $c_i\in A_j$, whereas
$c_j\notin A_j$. Moreover, for each $k$, the set $A_k$ contains every
$c_j$ with $j\ne k$. Thus $r-1\le |A_k|\le3$. Since $r\ge4$, it follows
that $r=4$ and
\[
 A_k=Q\setminus\{c_k\}
 \qquad\text{for every }k\in[4],
\]
where $Q=\{c_1,c_2,c_3,c_4\}$.

Now let $A\in\mathcal A$. For every distinct $i,j\in[4]$, the hypothesis
gives $A\cap A_i\cap A_j\ne\varnothing$. Since
$A_i\cap A_j=Q\setminus\{c_i,c_j\}$, these intersections run through all
two-element subsets of $Q$. Hence $A$ meets every two-element subset of
$Q$, and therefore contains at least three elements of $Q$. As
$|A|\le3$, we have $A=Q\setminus\{c_k\}$ for some $k\in[4]$. The four
possibilities already occur as $A_1,\ldots,A_4$, completing the proof.
\end{proof}

For a perfect weighted $L$-coloring $\mathbf w$ of $K_N$, define the
normalized color-$i$ weighted degree of a vertex $x$ and the normalized
total mass of color $i$ by
\[
 d_i(x):=\frac1N\sum_{y\in[N]\setminus\{x\}}w_i(xy),
 \qquad
 \overline w_i:=\frac1N\sum_{x\in[N]}d_i(x)
 =\frac{2}{N^2}\sum_{xy\in E(K_N)}w_i(xy).
\]
A color $i$ is \emph{present} if $\overline w_i>0$, or equivalently if
it has positive weight on at least one edge. We shall repeatedly use the
immediate bound
\begin{equation}\label{eq:Ki-degree-bound}
 K_i(x,y)\le \min\{d_i(x),d_i(y)\}.
\end{equation}

The next lemma turns approximate lower bounds on monochromatic triangle
weights and color-specific cherry weights into an almost-everywhere
dominant color.

\begin{lemma}[Robust dominant-color selection]
\label{lem:codeg-weighted-dominance}
Let $L\in\N$ and let $\kappa<t<d<1$. For every $\eps>0$, there exist
$\eta>0$ and $N_0\in\N$ such that the following holds. Let
$N\ge N_0$, $G$ be a graph on $[N]$, with $|E(G)| > (1- \eta) \binom{N}{2}$
and let $\mathbf w$ be a perfect weighted $L$-coloring of
$K_N$ satisfying:
\begin{enumerate}[label=\textup{(A\arabic*)},leftmargin=3.2em]
 \item\label{item:dominance-A1}
 $M(x,y,z)\ge d$ whenever $xy, xz, yz \in E(G)$;
 \item\label{item:dominance-A2}
 For every $xy\in E(G)$ and every $i\in[L]$, we have $K_i(x,y)\ge t$ whenever
 $w_i(xy)\ge\eta$.
\end{enumerate}
Then there exist a color $c\in[L]$ and a set
$\mathcal E\subseteq E(G)$ with $|\mathcal E|\le\eps N^2$ such that
every edge $xy\in E(G)\setminus\mathcal E$ satisfies:
\begin{enumerate}[label=\textup{(a\arabic*)},leftmargin=3.2em]
 \item\label{item:dominance-a1}
 $w_c(xy)>t$;
 
 \item\label{item:dominance-a2}
 for every $i\in[L]$, if $w_i(xy)\ge\eps$ then we have $w_i(xy)> t
  \ \text{ and }\ 
  K_i(x,y)\ge t$
\item\label{item:dominance-a3}
 \[
  \bigl|\{z\in[N]\setminus\{x,y\}:M(x,y,z)<d\}\bigr|
  \le\eps N.
 \]
\end{enumerate}
\end{lemma}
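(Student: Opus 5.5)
The plan is to reduce the statement to a finite, essentially deterministic problem about the heavy-colour sets of edges. We then run the sampling-and-counting argument from the proof of \cref{cor:robust-weighted-clique}.

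\textbf{Constants and the exceptional set.} Fix $\eps'\ll\eps$ with $\eps'<1/L$ and $\kappa+2\eps'<t$, choose $\eta\ll\eps'$, and let $N_0$ be large. A triple $xyz$ is \emph{bad} if some pair of it is not in $E(G)$. There are at most $\eta N^3$ bad triples, since each nonedge of $G$ lies in fewer than $N$ triples.

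Let $\mathcal E$ consist of the edges of $G$ lying in more than $\eps'N$ bad triples, together with the exceptional edges produced at the last step. By Markov's inequality, $\mathcal E$ initially has size $O(\eta N^2/\eps')\le\eps N^2/2$. Since \ref{item:dominance-A1} holds on every good triangle, \ref{item:dominance-a3} follows immediately for $xy\notin\mathcal E$.

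\textbf{Step 1: \ref{item:dominance-a2} via \cref{lemma:kappa-inequality}.} For any single colour $i$, the bound used in the proof of \cref{thm:weighted-clique} gives
\[
 M(x,y,z)\le\Phi\bigl(w_i(xy),w_i(xz),w_i(yz)\bigr).
\]
Let $xy\notin\mathcal E$ and $w_i(xy)\ge\eps'\ge\eta$, and suppose $w_i(xy)\le t$. Then every good $z$ satisfies $\Phi\ge d$, so \cref{lemma:kappa-inequality} gives $w_i(xz)w_i(yz)<\kappa$. Averaging over $z$, with the at most $\eps'N$ bad $z$ contributing at most $\eps'$, we get
\[
 K_i(x,y)<\kappa+\eps'<t,
\]
which contradicts \ref{item:dominance-A2}. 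Hence $w_i(xy)>t$, and $K_i(x,y)\ge t$ by \ref{item:dominance-A2}. This proves \ref{item:dominance-a2}.

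Consequently, each edge $e\notin\mathcal E$ has a heavy set
\[
 D_e:=\{i:w_i(e)>t\}
\]
that contains every colour of weight at least $\eps'$. In particular $D_e\neq\varnothing$, because some colour has weight at least $1/L>\eps'$. Also $|D_e|\le3$, because $t>1/4$.

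\textbf{Step 2: local structure on good $s$-sets.} Fix a large constant $s$. A random $s$-set $S$ has all its pairs outside $\mathcal E$ and all its triples good with probability at least $3/4$. On such an $S$, we show that some colour lies in $D_e$ for every $e\in\binom S2$.

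For a triangle $xyz$ in $S$, colours outside $D_{xy}\cap D_{xz}\cap D_{yz}$ contribute at most $L\eps'$ to $M$. Since $M\ge d$, the triple intersection is nonempty. To apply \cref{lem:rank-three} to $\{D_e:e\in\binom S2\}$ we need the stronger three-wise intersection property for three arbitrary edges of $S$, not only for triangles. This is the step I expect to be the main obstacle.

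My first attempt will be a $K_4$/$K_6$ case analysis in the style of \cref{thm:weighted-clique}. It would use that within a good triangle, a colour in $D_{xy}\cap D_{xz}$ but not in $D_{yz}$ forces $w_i(xy)w_i(xz)<\kappa$ by \cref{lemma:kappa-inequality}. Combined with the lower bound $>t$ on each heavy colour, this should propagate membership along edges sharing a vertex. If that fails, I would instead use the cherry bound $K_i\ge t>1/4$ to find common neighbours linking disjoint edges.

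Once \cref{lem:rank-three} applies, the exceptional alternative is that every $D_e$ is a $3$-subset of a four-set $Q$ of colours. This is excluded as follows. Each weight then lies in $(t,1-2t)$, so each colour's triangle product is at most $(1-2t)^3<(1-2\kappa)^3\approx0.05$. Hence
\[
 M\le4(1-2t)^3+L\eps'<\kappa<d,
\]
which contradicts \ref{item:dominance-A1}.

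\textbf{Step 3: globalising.} Let $G_i$ be the graph of edges outside $\mathcal E$ with $i\in D_e$, and set $q_i:=e(G_i)/\binom N2$. Then $\sum_iq_i\le3$, since $|D_e|\le3$. Every good $s$-set spans a $K_s$ in some $G_i$, so
\[
 \frac34\le\sum_i\frac{\#K_s(G_i)}{\binom Ns}.
\]
Bounding each term via the Lov\'asz form of Kruskal--Katona exactly as in \cref{cor:robust-weighted-clique}, with $3$ in place of $2$, gives a colour $c$ with $q_c>1-\eps/2$. Adding the at most $\eps N^2/2$ edges outside $G_c$ to $\mathcal E$ yields \ref{item:dominance-a1} while preserving \ref{item:dominance-a2} and \ref{item:dominance-a3}.
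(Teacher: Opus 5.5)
Your Steps~1 and~3 are sound, but the proof stops at its crux, the step you flag yourself in Step~2. Nothing in the proposal shows that $D_e\cap D_f\cap D_g\neq\varnothing$ for three arbitrary pairs $e,f,g$ of a good $s$-set. Without this, neither \cref{lem:rank-three} nor the $K_s$-counting of Step~3 can be invoked. The mechanism you propose does not close the gap. A colour $i\in D_{xy}\cap D_{xz}\setminus D_{yz}$ does give $w_i(xy)w_i(xz)<\kappa$ via \cref{lemma:kappa-inequality}. But heaviness only gives $w_i(xy)w_i(xz)>t^2$, and $t^2<\kappa$ whenever $t<\sqrt\kappa$, so no contradiction arises and nothing propagates. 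The fallback to the cherry bound is not carried out. As written, this is a genuine gap.

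The missing statement is true, and it is easier to prove the stronger fact that one colour lies in every $D_e$, $e\in\binom S2$; this makes \cref{lem:rank-three} unnecessary. Shrink $\eps'$ so that $L\eps'<d-\max\{1/4,(1-t)^3\}$.
\begin{enumerate}
 \item \emph{Some $D_{xy}=\{i\}$ is a singleton.} Then $w_i(xy)\ge1-L\eps'$, so every triangle $xyv$ gives $w_i(xv)w_i(yv)\ge d-L\eps'$, hence $i\in D_{xv}\cap D_{yv}$. Suppose $i\notin D_{uv}$ for a pair $uv$ disjoint from $xy$. The triangles $uvx$ and $uvy$ give $(1-w_i(ux))(1-w_i(vx))\ge d-\eps'$ and $(1-w_i(uy))(1-w_i(vy))\ge d-\eps'$. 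Combining these with the bounds from $xyu$ and $xyv$ yields $(d-L\eps')^4\le\prod_{e\in\{xu,xv,yu,yv\}}w_i(e)(1-w_i(e))\le4^{-4}$, contradicting $d>1/4$.
 \item \emph{No $D_e$ is a singleton.} Then every weight on every pair of $S$ is at most $1-t$. A triangle whose heavy sets share exactly one colour has $M<(1-t)^3+L\eps'<d$, since $(1-t)^3<\kappa$. A triangle containing a pair with $|D_e|=3$ has $M\le(1-2t)(1-t)<\kappa$. So on each triangle the three heavy sets are one common $2$-set, and chaining through triangles shows that all $D_e$ coincide.
\end{enumerate}

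With this inserted, your route is genuinely shorter than the paper's. The paper works with the vertex sets $I(x)=\{i:d_i(x)\ge t\}$. Their three-wise intersection is automatic from a heavy colour on a triangle, together with (A2) and $K_i(x,y)\le\min\{d_i(x),d_i(y)\}$. That only yields a colour active at almost every vertex, so the paper then needs a separate activity-to-dominance argument through pure edges. Your edge-level counting would bypass that step.
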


\begin{proof}
We first establish a light--heavy dichotomy on the edges of $G$ whose
endpoints have few nonneighbors. Since $t>\kappa$ and
$\kappa=(1-\kappa)^3$, we have $t>(1-t)^3$. As $d>t$, it follows that
$1-\sqrt{d/(1-t)}<t$. Choose $\zeta>0$ so small that
$\kappa+2\zeta<t$,
$2(1-\sqrt{d/(1-t)})+6\zeta<2t$, and
$4(L+2)\zeta<\eps$.

Let $U$ be the set of vertices having at most $\zeta N$ nonneighbors in
$G$. Since $G$ has fewer than $\eta\binom N2$ missing edges, we have
$|[N]\setminus U|\le \eta N/\zeta$. Moreover, if
$xy\in E(G[U])$, then at most $2\zeta N$ vertices $z$ fail to satisfy
$xz,yz\in E(G)$.

\smallskip
\noindent\emph{The light--heavy dichotomy.}
Fix $xy\in E(G[U])$ and $i\in[L]$, and suppose that
$\eta\le a:=w_i(xy)\le t$. For every $z$ with
$xz,yz\in E(G)$, put $b:=w_i(xz)$ and $c:=w_i(yz)$. By perfectness,
$M(x,y,z)\le \Phi(a,b,c)$, so \cref{lemma:kappa-inequality} and
\ref{item:dominance-A1} give $bc<\kappa$. The remaining choices of $z$
contribute at most $2\zeta$ to the normalized sum, and hence
$K_i(x,y)<\kappa+2\zeta<t$, contrary to
\ref{item:dominance-A2}. Thus every $xy\in E(G[U])$ satisfies
\begin{equation}\label{eq:dominance-quantization}
 w_i(xy)<\eta\quad\text{or}\quad w_i(xy)>t
 \qquad\text{for every }i\in[L].
\end{equation}
Call these two alternatives \emph{light} and \emph{heavy}, respectively.

Require $\eta$ to be small enough that
$L\eta<\min\{1,d-t\}$. Then every edge of $G[U]$ has a heavy color.
Every triangle of $G[U]$ also has a color that is heavy on all three
edges: otherwise each color contributes less than $\eta$ to its
monochromatic triangle weight, giving $M(x,y,z)<L\eta<d$.

\smallskip
\noindent\emph{A color active at almost every vertex.}
For $x\in[N]$, set $I(x):=\{i\in[L]:d_i(x)\ge t\}$ and
$A_i:=\{x\in[N]:i\in I(x)\}$. Since
$\sum_i d_i(x)=(N-1)/N<1$ and $t>\kappa>1/4$, every $I(x)$ has size at
most three. If $i$ is heavy on an edge $xy\in E(G[U])$, then
\ref{item:dominance-A2} and \eqref{eq:Ki-degree-bound} imply
$i\in I(x)\cap I(y)$.

Choose $\delta>0$ such that
$4\zeta<\delta$ and $(L+2)\delta<\eps$, and then choose an integer
$s\ge4$ with $3(1-\delta)^{s-1}<1/2$. By taking $\eta$ smaller if
necessary, a uniformly random $s$-set is contained in $U$ and spans a
clique of $G$ with probability at least $3/4$: indeed, the probability
of meeting $[N]\setminus U$ is at most $s\eta/\zeta$, while the
probability of containing a missing edge of $G$ is at most
$\eta\binom{s}{2}$.

Let $S$ be such an $s$-set. The family $\{I(x):x\in S\}$ is three-wise
intersecting. For three distinct vertices this follows from a common
heavy color of their triangle; the cases with repetitions follow from a
heavy color of the corresponding edge and from $I(x)\ne\varnothing$.

We claim that the exceptional configuration in \cref{lem:rank-three}
cannot occur. Otherwise, there are distinct vertices
$x_1,x_2,x_3,x_4\in S$ and distinct colors $c_1,c_2,c_3,c_4$ such that,
with $Q:=\{c_1,c_2,c_3,c_4\}$,
$I(x_j)=Q\setminus\{c_j\}$ for every $j\in[4]$. On the face opposite
$x_j$, the only color that can be heavy on all three edges is $c_j$.
Consequently, the product of the three $c_j$-weights on that face is at
least $d-L\eta$. Multiplying over the four faces and using
$w_i(e)w_j(e)\le1/4$ for distinct colors $i,j$ gives
\[
 (d-L\eta)^4\le4^{-6},
\]
which is impossible because $d-L\eta>t>1/4$.

Hence every such $S$ is contained in $A_i$ for some $i$. Since at least
three quarters of all $s$-sets have this property and
$\sum_i|A_i|\le3N$, if every $|A_i|<(1-\delta)N$, then
\[
 \frac34
 \le \sum_{i\in[L]}\frac{\binom{|A_i|}{s}}{\binom Ns}
 \le (1-\delta)^{s-1}\sum_{i\in[L]}\frac{|A_i|}{N}
 \le3(1-\delta)^{s-1}<\frac12,
\]
a contradiction. Therefore some color $c_0$ satisfies
$|A_{c_0}|\ge(1-\delta)N$.

\smallskip
\noindent\emph{From activity to dominance.}
Call an edge $xy\in E(G[U])$ \emph{$i$-pure} if
$w_i(xy)\ge1-L\eta$, and let $P_i$ be the set of $i$-pure edges.

Fix $a\in[L]$ and let $xy\in E(G[A_a\cap U])$ be an edge on which $a$
is light. Let $D$ be the set of heavy colors on $xy$ other than $a$.
This set is nonempty. For every $i\in D$, condition
\ref{item:dominance-A2} gives $K_i(x,y)\ge t$. Moreover,
$\sum_{i\ne a}K_i(x,y)\le1-d_a(x)-d_a(y)+K_a(x,y)$, and
\eqref{eq:Ki-degree-bound} therefore gives
\[
 |D|t\le \sum_{i\in D}K_i(x,y)
 \le1-d_a(x)-d_a(y)+K_a(x,y)\le1-t.
\]
Since $t>1/4$, we have $|D|\le2$.

Suppose that $D=\{i,j\}$. Then
$w_i(xy),w_j(xy)\le1-t$, while every other color is light on $xy$.
For every $z$ with $xz,yz\in E(G)$, condition
\ref{item:dominance-A1} gives
\[
 d\le(1-t)(1-w_a(xz))(1-w_a(yz))+L\eta.
\]
By the AM–GM inequality, averaging over $z$ yields
\[
 d_a(x)+d_a(y)
 \le2\left(1-\sqrt{\frac{d-L\eta}{1-t}}\right)
    +4\zeta+\frac2N<2t,
\]
provided that $\eta$ is sufficiently small and $N$ sufficiently large.
This contradicts $x,y\in A_a$. Thus $D$ consists of a single color $i$,
and \eqref{eq:dominance-quantization} shows that $xy$ is $i$-pure.

An $i$-pure edge propagates color $i$ to almost every vertex. Indeed, if
$xy$ is $i$-pure and $xz,yz\in E(G)$, then the total contribution of
colors other than $i$ to $M(x,y,z)$ is at most $L\eta$. Therefore
$w_i(xz)w_i(yz)\ge d-L\eta>t$. Hence $i$ is heavy on both $xz$ and
$yz$, and \ref{item:dominance-A2} together with
\eqref{eq:Ki-degree-bound} gives $i\in I(z)$. Consequently,
\begin{equation}\label{eq:dominance-pure-active}
 P_i\ne\varnothing\quad\Longrightarrow\quad
 |A_i|\ge(1-2\zeta)N.
\end{equation}

At most one color $i$ satisfies $|P_i|\ge\delta N^2$. Suppose that
$i\ne j$ both do. Fix $xy\in P_i$. Since $x,y\in U$, fewer than
$2\zeta N^2+2N<\delta N^2$ edges either meet $\{x,y\}$ or have an
endpoint nonadjacent in $G$ to $x$ or $y$. We may therefore choose
$uv\in P_j$ disjoint from $xy$ such that all four cross-pairs belong to
$G$. Applying the preceding product estimate to the four triangles
$xyu,xyv,uvx,uvy$ gives
\[
 (d-L\eta)^4
 \le\prod_{e\in\{xu,xv,yu,yv\}}w_i(e)w_j(e)
 \le4^{-4},
\]
again contradicting $d-L\eta>1/4$.

If some color has at least $\delta N^2$ pure edges, let $c$ be the
unique such color; otherwise let $c:=c_0$. In the first case,
\eqref{eq:dominance-pure-active} applies, and in the second we use the
choice of $c_0$. Since $2\zeta<\delta$, in either case
$|A_c|\ge(1-\delta)N$.

Every $c$-light edge in $G[A_c\cap U]$ is $i$-pure for some $i\ne c$.
By the choice of $c$, each such $P_i$ has fewer than $\delta N^2$ edges,
so there are fewer than $L\delta N^2$ $c$-light edges in
$G[A_c\cap U]$.

Now require $\eta/\zeta<\delta$ and $\eta<\eps$, and put
$W:=A_c\cap U$. Then $|[N]\setminus W|<2\delta N$. Let $\mathcal E$
consist of the edges of $G$ meeting $[N]\setminus W$ together with the
$c$-light edges of $G[W]$. We have
$|\mathcal E|<(L+2)\delta N^2<\eps N^2$.

Let $xy\in E(G)\setminus\mathcal E$. Then $x,y\in W$ and $c$ is not
light on $xy$, so \eqref{eq:dominance-quantization} gives
$w_c(xy)>t$, proving \ref{item:dominance-a1}. If
$w_i(xy)\ge\eps$, then $w_i(xy)\ge\eta$, and the same dichotomy gives
$w_i(xy)>t$; condition \ref{item:dominance-A2} then gives
$K_i(x,y)\ge t$, proving \ref{item:dominance-a2}.

Finally, if $M(x,y,z)<d$, then at least one of $xz,yz$ is missing from
$G$. Since $x,y\in U$, there are at most
$2\zeta N\le\eps N$ such vertices $z$, which proves
\ref{item:dominance-a3}.

Taking $\eta>0$ smaller than all bounds imposed above and then choosing
$N_0$ sufficiently large completes the proof.
\end{proof}

We now add a lower bound on the total mass of every present color.
This reduces the coloring to at most two colors and determines the
support of the possible second color.

\begin{theorem}[Structure of codegree-weighted colorings]
\label{thm:codeg-weighted-structure}
Let $\omega_0>0$ and let $\kappa<t<d<1$. For every $\eps>0$, there
exist $\eta>0$ and $N_0\in\N$ such that the following holds.

Let $N\ge N_0$, let $G$ be a graph on $[N]$ satisfying
\[
 e(G)>(1-\eta)\binom N2,
\]
and let $\mathbf w$ be a perfect weighted $L$-coloring of $K_N$, where
$L$ is arbitrary. Suppose that:
\begin{enumerate}[label=\textup{(C\arabic*)},leftmargin=3.2em]
 \item\label{item:structure-C1}
 $M(x,y,z)\ge d$ whenever $xy,xz,yz\in E(G)$;

 \item\label{item:structure-C2}
 for every $xy\in E(G)$ and every $i\in[L]$,
 \[
  w_i(xy)\ge\eta
  \quad\Longrightarrow\quad
  K_i(x,y)\ge t;
 \]

 \item\label{item:structure-C3}
 every present color satisfies $\overline w_i\ge\omega_0$.
\end{enumerate}
Then there exist a present color $c$, a set
$U\subseteq[N]$ with $|U|\ge(1-\eps)N$, and a graph $F$ on $U$ such
that
\[
 \overline G[U]\subseteq F
 \qquad\text{and}\qquad
 \Delta(F)\le\eps N,
\]
and the following properties hold:
\begin{enumerate}[label=\textup{(\roman*)},leftmargin=2.4em]
 \item\label{item:structure-concl1}
 for every $xy\in\binom U2\setminus E(F)$,
 \[
  w_c(xy)>t,
 \]
 and $d_c(x)\ge t-\eps$ for every $x\in U$;

 \item\label{item:structure-concl2}
 at most one present color is different from $c$;

 \item\label{item:structure-concl3}
 if $b\ne c$ is present and
 \[
  S:=\bigl\{x\in[N]:
  d_b(x)\ge(1-\kappa)^2\bigr\},
 \]
 then
 \[
  |S|\ge(1-\kappa-\eps)N,
 \]
 and, for every $xy\in\binom U2\setminus E(F)$,
 \[
  w_b(xy)>t
  \quad\Longleftrightarrow\quad
  x,y\in S.
 \]
 In particular, if at least one of $x,y$ lies outside $S$, then
 $w_b(xy)<\eps$.
\end{enumerate}
\end{theorem}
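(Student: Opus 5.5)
The plan is to first reduce to a bounded number of colors and then apply \cref{lem:codeg-weighted-dominance}. By \ref{item:structure-C3} and $\sum_i\overline w_i\le 1$, at most $1/\omega_0$ colors are present. We relabel so that $L\le 1/\omega_0$; the constants then depend only on $\omega_0,t,d,\eps$. Apply \cref{lem:codeg-weighted-dominance} with a much smaller error $\eps'\ll\eps,\omega_0$. This yields a color $c$ and an exceptional edge set $\mathcal E$ with $|\mathcal E|\le\eps'N^2$ satisfying \ref{item:dominance-a1}--\ref{item:dominance-a3}. Let $U$ be the set of vertices meeting at most $\sqrt{\eps'}N$ edges of $\mathcal E\cup E(\overline G)$, and let $F$ be the graph on $U$ of pairs in $\mathcal E$ or missing from $G$. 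Counting gives $|U|\ge(1-\eps)N$ and $\Delta(F)\le\eps N$. Conclusion \ref{item:structure-concl1} then follows: $w_c(xy)>t$ off $F$ is \ref{item:dominance-a1}. Summing over $y\in U$ with $xy\notin F$ gives $d_c(x)\ge t(1-2\eps)\ge t-\eps$.

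For \ref{item:structure-concl2}, take a present color $b\ne c$. By \ref{item:dominance-a2}, on every edge of $\binom U2\setminus E(F)$ the color $b$ is either below $\eps'$ or above $t$, and in the latter case $K_b(x,y)\ge t$. Since $\overline w_b\ge\omega_0$ and the light edges, $F$, and $[N]\setminus U$ together carry mass $O(\eps')$, the color $b$ is heavy on at least $(\omega_0/2)N^2$ edges of $U$. Call an edge of $U$ that is heavy in both $c$ and $b$ a $bc$-edge. Since $w_c+w_b\le1$ and both exceed $t>1/4$, every other color is light on a $bc$-edge.

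The support of $b$ is determined by the triangle condition. Let $S_b$ be the set of vertices of $U$ with many $b$-heavy edges. Using \ref{item:dominance-a3}, a $b$-heavy edge $xy$ and almost every $z$ satisfy $M(x,y,z)\ge d$. By \cref{lemma:kappa-inequality}, applied with $a=w_c(xy)\le 1-t$ if necessary, together with the two-color structure, either $w_b(xz)w_b(yz)$ or $w_c(xz)w_c(yz)$ must be large. I would show that in a triangle $xyz$ that is $b$-heavy on $xy$ but has $z\notin S_b$, the color $b$ is light on $xz,yz$. Then $M\approx w_c(xy)w_c(xz)w_c(yz)+(\text{others light})$. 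Against this, $K_b(x,y)\ge t$ forces $b$-heavy cherries through at least $tN$ vertices. The intended conclusion is that $b$-heavy edges form essentially a clique on $S$, and the complement pairs carry $c$ almost purely.

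For a third present color $b'$, its heavy edges would also form a near-clique $S'$ of size at least $(1-\kappa)N$ by the size bound below. Then $S\cap S'$ has size above $(1-2\kappa)N>N/4$. An edge inside $S\cap S'$ carrying $c,b,b'$ all heavy would violate $w_c+w_b+w_{b'}\le1$ unless one is light. The clique structure then forces a contradiction with the argument used above for two pure colors (the $(d-L\eta)^4\le4^{-4}$ product bound), which proves \ref{item:structure-concl2}.

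For the size bound in \ref{item:structure-concl3}, take $x,y\in S$ with $b$ heavy. Triangles $xyz$ with $z\notin S$ have $w_b(xz),w_b(yz)<\eps$. Hence $M(x,y,z)\le w_c(xy)w_c(xz)w_c(yz)+O(\eps)$, so $w_c(xz)w_c(yz)\gtrsim d$. Then $K_b(x,y)\ge t$ gives $|S|/N\ge t$, but sharper is the following. Inside $S$, since $M\ge d$ and only two colors are present, $\Phi(w_b(xy),w_b(xz),w_b(yz))\ge d$ holds. Averaging, $d_b(x)\ge(1-\kappa)^2$ follows from the $\kappa$-stability: a $b$-heavy edge forces $w_b(xz)w_b(yz)\ge\kappa$-type bounds, and $w_c\le 1-w_b$. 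The cherry bound $K_b\ge t>\kappa$ combined with $w_b\le1-t$ on $bc$-edges then gives $|S|\ge(1-\kappa-\eps)N$: if $|S|=\sigma N$, then $t\le K_b\le\sigma(1-t)^2$ and $d\le\ldots$ pushes $\sigma\ge 1-\kappa$.

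The equivalence ``$w_b(xy)>t$ iff $x,y\in S$'' is where I expect the real difficulty. The direction $\Rightarrow$ follows from $d_b\ge K_b\ge t$, upgraded to $(1-\kappa)^2$ via the stability lemma. The direction $\Leftarrow$ requires showing that an edge with both ends in $S$ cannot be $b$-light. I would attempt this by contradiction: a $c$-pure edge $xy$ with $x,y\in S$ propagates, via $M\ge d$, the inequality $w_c(xz)w_c(yz)>t$ for almost all $z$. This forces $w_b(xz)<1-t$ everywhere, and the dichotomy then makes $b$ light at $x$, contradicting $d_b(x)\ge(1-\kappa)^2>0$ once the quantization is applied.
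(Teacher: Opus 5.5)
\textbf{Verdict: there is a genuine gap.} Your setup and conclusion (i) match the paper: bounded colour count, then the dominant-colour lemma, then $U$ and $F$ built from $\overline G\cup\mathcal E$. The inequality $t\le K_b(x,y)\le\sigma(1-t)^2$ is also the right engine for the support bound. The remaining steps, however, are either incorrect or proved in a circular order.

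\emph{The direction $\Leftarrow$ in (iii) fails as written.} From a clean $b$-light pair $xy$ with $x,y\in S$ you correctly get $w_c(xz)w_c(yz)\ge d-o(1)$ for almost all $z$. But $w_b(xz)<1-t$ does not make $b$ light at $x$. The dichotomy only excludes weights in $[\eps',t]$, and for $t<1/2$ (the relevant regime, e.g.\ $t$ just above $\kappa$) the window $(t,1-t)$ is nonempty. Even the sharper pointwise bound $w_b(xz)\le 1-d+o(1)$ is compatible with $d_b(x)\ge(1-\kappa)^2$.

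The missing idea is to use both endpoints at once. From $(1-w_b(xz))(1-w_b(yz))\ge d-o(1)$, AM--GM gives $w_b(xz)+w_b(yz)\le 2(1-\sqrt d)+o(1)$. Averaging over $z$ then yields $d_b(x)+d_b(y)\le 2(1-\sqrt d)+o(1)<2(1-\kappa)^2$. This uses $d>\kappa$ and the numerical fact $1-\sqrt\kappa<(1-\kappa)^2$.

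This step also needs $w_c=1-w_b$, i.e.\ conclusion (ii). Your proof of (ii), however, relies on the near-clique structure of $b$ on $S$, which is exactly this direction. So the order is circular.

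\emph{Your argument for (ii) rests on false claims.} You assert that an edge heavy in $c,b,b'$ violates $w_c+w_b+w_{b'}\le 1$, and earlier that every other colour is light on a $bc$-edge. Both fail for $t\in(\kappa,1/3)$, since then $3t<1$. The appeal to the $(d-L\eta)^4\le 4^{-4}$ bound is not an argument.

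The paper avoids edges and argues with vertex degrees, for each colour $i\ne c$ separately and before knowing (ii):
\begin{enumerate}
\item For a clean $i$-heavy pair $xy$, almost every $yz$ is clean. Hence $w_i(yz)\le 1-w_c(yz)<1-t$, and so $t\le K_i(x,y)\le (1-t)d_i(x)+o(1)$.
\item This gives $d_i(x)\ge t/(1-t)-o(1)>\kappa/(1-\kappa)=(1-\kappa)^2$. This is the ``upgrade'' you attributed to \cref{lemma:kappa-inequality}, which does not provide it. It proves $\Rightarrow$ for every $i$.
\item Your cherry bound then gives $|S_i|\ge (t/(1-t)^2-o(1))N\ge(1-\kappa-\eps)N$.
\item For $i\ne j$ (both $\ne c$), any $x\in U\cap S_i\cap S_j$ would satisfy $d_c(x)+d_i(x)+d_j(x)\ge t-\eps+2(1-\kappa)^2>1$, which is impossible.
\item But $|S_i\cap S_j|\ge(1-2\kappa-2\eps)N$ exceeds $|[N]\setminus U|$, so such an $x$ exists. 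This contradiction proves (ii).
\end{enumerate}
Only after (ii) is established can $\Leftarrow$ be proved by the two-endpoint averaging described above.
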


\begin{proof}
It is enough to prove the result for sufficiently small $\eps$, since
replacing $\eps$ by a smaller value only strengthens the conclusion.
Put
\[
 \lambda:=(1-\kappa)^2.
\]
The identity $\kappa=(1-\kappa)^3$ gives
\[
 \lambda=\frac{\kappa}{1-\kappa}
 \qquad\text{and}\qquad
 \frac{\kappa}{(1-\kappa)^2}=1-\kappa.
\]
Since $t>\kappa$, it follows that
\begin{equation}\label{eq:structure-fixed-margins}
 t>(1-t)\lambda,
 \qquad
 \frac{t}{(1-t)^2}>1-\kappa.
\end{equation}
Moreover, $\lambda>(1-t)/2$. We also have
$\lambda>1-\sqrt d$: indeed,
$d>\kappa$ and
$1-\sqrt\kappa<(1-\kappa)^2$.

Choose $\delta>0$ sufficiently small compared with $\eps,\omega_0$ and
the positive margins in \eqref{eq:structure-fixed-margins}, as well as
$t+2\lambda-1$ and $\lambda-1+\sqrt d$.

Since
\[
 \sum_{i\in[L]}\overline w_i=\frac{N-1}{N}<1,
\]
condition~\ref{item:structure-C3} implies that there are at most
$1/\omega_0$ present colors. Delete the zero colors and append zero
colors if necessary, so that the number of colors is bounded solely in
terms of $\omega_0$.

Apply \cref{lem:codeg-weighted-dominance} with output error
$\delta^2$. By decreasing $\eta$ and increasing $N_0$, we obtain a
present color $c$ and a set
$\mathcal E_0\subseteq E(G)$ with
\[
 |\mathcal E_0|\le\delta^2N^2
\]
such that every edge $xy\in E(G)\setminus\mathcal E_0$ satisfies
\begin{enumerate}[label=\textup{(\alph*)},leftmargin=2.4em]
 \item\label{item:structure-proof-a}
 $w_c(xy)>t$;

 \item\label{item:structure-proof-b}
 if $w_i(xy)\ge\delta^2$, then
 $w_i(xy)>t$ and $K_i(x,y)\ge t$;

 \item\label{item:structure-proof-c}
 at most $\delta^2N$ vertices $z$ satisfy
 $M(x,y,z)<d$.
\end{enumerate}

Let
\[
 B:=\overline G\cup\mathcal E_0
\]
and delete every vertex whose degree in $B$ exceeds $\delta N$. Denote
the remaining set by $U$ and put $F:=B[U]$. Since
$e(B)\le2\delta^2N^2$, we have
\[
 |[N]\setminus U|\le4\delta N.
\]
Furthermore,
\[
 \overline G[U]\subseteq F
 \qquad\text{and}\qquad
 \Delta(F)\le\delta N.
\]
Thus, by choosing $\delta$ sufficiently small,
\[
 |U|\ge(1-\eps)N
 \qquad\text{and}\qquad
 \Delta(F)\le\eps N.
\]

Every $x\in U$ is joined by an edge of
$\binom U2\setminus E(F)$ to all but $O(\delta N)$ vertices. By
\ref{item:structure-proof-a}, all these edges have color-$c$ weight
greater than $t$, and hence
\begin{equation}\label{eq:structure-dominant-degree}
 d_c(x)\ge t-O(\delta)\ge t-\eps
 \qquad\text{for every }x\in U.
\end{equation}
This proves \ref{item:structure-concl1}.

For each color $i\ne c$, define
\[
 S_i:=\{x\in[N]:d_i(x)\ge\lambda\}.
\]
We first show that every clean $i$-heavy edge has both endpoints in
$S_i$. Let
$xy\in\binom U2\setminus E(F)$ satisfy $w_i(xy)>t$. By
\ref{item:structure-proof-b}, $K_i(x,y)\ge t$. For all but
$O(\delta N)$ vertices $z$, the pair $yz$ lies in
$\binom U2\setminus E(F)$, and hence $w_c(yz)>t$. Therefore
$w_i(yz)\le1-t$, and
\[
 t\le K_i(x,y)\le(1-t)d_i(x)+O(\delta).
\]
By \eqref{eq:structure-fixed-margins} and the choice of $\delta$, this
gives $d_i(x)>\lambda$. The same argument gives
$d_i(y)>\lambda$. Thus
\begin{equation}\label{eq:structure-heavy-endpoints}
 w_i(xy)>t
 \quad\Longrightarrow\quad
 x,y\in S_i
\end{equation}
for every $xy\in\binom U2\setminus E(F)$.

If $i,j\ne c$ are distinct, then
\begin{equation}\label{eq:structure-support-disjoint}
 S_i\cap S_j\subseteq[N]\setminus U.
\end{equation}
Indeed, if $x\in U\cap S_i\cap S_j$, then
\eqref{eq:structure-dominant-degree} gives
\[
 1>\sum_kd_k(x)
 \ge d_c(x)+d_i(x)+d_j(x)
 \ge t-O(\delta)+2\lambda>1,
\]
a contradiction.

Now let $i\ne c$ be present. We claim that some edge
$xy\in\binom U2\setminus E(F)$ satisfies
$w_i(xy)\ge\delta^2$. Otherwise, the pairs meeting
$[N]\setminus U$, the edges of $F$, and the remaining pairs contribute
only $O(\delta)$ to $\overline w_i$, contradicting
$\overline w_i\ge\omega_0$. By
\ref{item:structure-proof-b}, such an edge is $i$-heavy and satisfies
$K_i(x,y)\ge t$.

For all but $O(\delta N)$ vertices $z\in S_i$, both $xz$ and $yz$ are
clean and $c$-heavy, so
\[
 w_i(xz)w_i(yz)\le(1-t)^2.
\]
If $z\notin S_i$ and $xz$ is clean, then
\eqref{eq:structure-heavy-endpoints} shows that $xz$ is not
$i$-heavy. Hence \ref{item:structure-proof-b} gives
$w_i(xz)<\delta^2$. Consequently,
\[
 t\le K_i(x,y)
 \le(1-t)^2\frac{|S_i|}{N}+O(\delta).
\]
Using \eqref{eq:structure-fixed-margins} and choosing $\delta$
sufficiently small, we obtain
\begin{equation}\label{eq:structure-large-support}
 |S_i|\ge(1-\kappa-\eps)N.
\end{equation}

There cannot be two distinct present colors $i,j\ne c$. Indeed,
\eqref{eq:structure-large-support} would give
\[
 |S_i\cap S_j|
 \ge |S_i|+|S_j|-N
 \ge(1-2\kappa-2\eps)N,
\]
whereas \eqref{eq:structure-support-disjoint} and
$|[N]\setminus U|=O(\delta N)$ give
$|S_i\cap S_j|=O(\delta N)$. Since $\kappa<1/3$, these bounds are
incompatible when $\eps$ and $\delta$ are sufficiently small. This
proves \ref{item:structure-concl2}.

Suppose finally that a second present color $b\ne c$ exists, and put
\[
 S:=S_b.
\]
The bound $|S|\ge(1-\kappa-\eps)N$ follows from
\eqref{eq:structure-large-support}. By
\eqref{eq:structure-heavy-endpoints}, every clean $b$-heavy edge has
both endpoints in $S$.

Conversely, let
$xy\in\binom U2\setminus E(F)$ with $x,y\in S$, and suppose that
$w_b(xy)\le t$. Then \ref{item:structure-proof-b} gives
\[
 u:=w_b(xy)<\delta^2.
\]
Since $c$ and $b$ are the only present colors, we have
$w_c(e)=1-w_b(e)$ on every edge $e$. By
\ref{item:structure-proof-c}, all but at most $\delta^2N$ vertices
$z$ satisfy $M(x,y,z)\ge d$. For each such $z$, writing
$p:=w_b(xz)$ and $q:=w_b(yz)$, we have
\[
 d\le(1-u)(1-p)(1-q)+upq.
\]
Since $u<\delta^2$, this implies
\[
 (1-p)(1-q)\ge d-2\delta^2.
\]
The arithmetic--geometric mean inequality therefore gives
\[
 p+q\le2\bigl(1-\sqrt{d-2\delta^2}\bigr).
\]
Averaging over $z$ and absorbing the exceptional vertices yields
\[
 d_b(x)+d_b(y)
 \le2\bigl(1-\sqrt{d-2\delta^2}\bigr)+O(\delta^2)
 <2\lambda,
\]
where the last inequality follows from
$\lambda>1-\sqrt d$ and the choice of $\delta$. This contradicts
$x,y\in S$. Hence every clean pair inside $S$ is $b$-heavy.

Together with \eqref{eq:structure-heavy-endpoints}, this proves
\[
 w_b(xy)>t
 \quad\Longleftrightarrow\quad
 x,y\in S
\]
for every $xy\in\binom U2\setminus E(F)$. If at least one endpoint lies
outside $S$, then $b$ is not heavy on $xy$, and
\ref{item:structure-proof-b} gives
$w_b(xy)<\delta^2<\eps$. This proves
\ref{item:structure-concl3}.
\end{proof}

We next prove a counting lemma for monochromatic cherries in a weighted
two-coloring of a complete graph.  As in the weighted
colorings considered in \cref{sec:pf-of-v-dominant}, we assign a color vector to each edge
of $K_N$.  Here the color set consists of red and blue, and each edge
$xy\in E(K_N)$ receives the probability vector
$\mathbf w_{xy}:=(w_{xy},1-w_{xy})$, where
$w_{xy}=w_{yx}\in[0,1]$.  Thus $w_{xy}$ and $1-w_{xy}$ are the red and
blue weights of the edge $xy$, respectively.

A cherry is a two-edge graph whose edges share a common endpoint.  More
precisely, for distinct vertices $x,y,z$, the edges $xz$ and $yz$ form a
cherry with center $z$ and leaves $x,y$.  If the colors of these two edges
are sampled independently according to their color vectors, then the
cherry is monochromatic red with probability $w_{xz}w_{yz}$ and
monochromatic blue with probability $(1-w_{xz})(1-w_{yz})$.
Accordingly, for each edge $xy\in E(K_N)$, define
\[
 \begin{aligned}
 K_W(x,y)
 &:=\frac1N\sum_{z\in[N]\setminus\{x,y\}}w_{xz}w_{yz},\\
 K_{1-W}(x,y)
 &:=\frac1N\sum_{z\in[N]\setminus\{x,y\}}
 (1-w_{xz})(1-w_{yz}).
 \end{aligned}
\]
Thus $K_W(x,y)$ and $K_{1-W}(x,y)$ are the normalized total weights of
the monochromatic red and blue cherries with leaves $x,y$, respectively.

The following lemma shows that, above the threshold $1/4$, the red and
blue cherry weights cannot both be large for almost every edge: for a
positive proportion of edges, at least one of the two monochromatic
cherry weights must be less than $q$.

\begin{lemma}[Weighted cherry dense]
\label{lemma:weighted cherry dense}
For every $q>1/4$ and every
$0<\beta<1-\frac1{4q}$, there exists $N_0\in\N$ such that, for every
$N\ge N_0$ and every weighted two-coloring on $K_N$, at least
$\beta N^2/2$ edges $xy\in E(K_N)$ satisfy
\[
 \min\{K_W(x,y),K_{1-W}(x,y)\}<q.
\]
\end{lemma}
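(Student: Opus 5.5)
The plan is to argue by contradiction and reduce the statement to a single averaged inequality with extremal value $1/4$. Suppose that fewer than $\beta N^2/2$ edges have $\min\{K_W,K_{1-W}\}<q$. Then the set $\mathcal G$ of good edges, on which both $K_W(x,y)\ge q$ and $K_{1-W}(x,y)\ge q$ hold, satisfies $|\mathcal G|\ge\binom N2-\beta N^2/2\ge(1-\beta-o(1))N^2/2$. From this I would derive a lower bound on some global averaged quantity and then show that this quantity is at most $1/4+o(1)$ for every weighted two-coloring. Since $q(1-\beta)>1/4$ exactly when $\beta<1-\frac1{4q}$, the contradiction should come out in that form. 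The uniform coloring $w\equiv1/2$, which gives $K_W=K_{1-W}=1/4+o(1)$ on every edge, shows that the constant $1/4$ cannot be improved, so the averaged quantity has to be chosen to be extremal exactly there.

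The first candidate is $\Sigma:=\sum_{x\ne y}\sqrt{K_W(x,y)K_{1-W}(x,y)}$. On good edges the summand is at least $q$ and elsewhere it is nonnegative, so $\Sigma\ge q(1-\beta-o(1))N^2$. For the upper bound I would expand the product as
\[
 K_WK_{1-W}=N^{-2}\sum_{z,z'}w_{xz}w_{yz}(1-w_{xz'})(1-w_{yz'})
\]
and try to show $\sum_{x,y}$ of something dominating the geometric mean is at most $N^2/4$. If $\Sigma$ turns out to be awkward, I would switch to $\sum_{x,y}K_W(x,y)K_{1-W}(x,y)$ with lower bound $q^2(1-\beta)N^2$ and aim for an upper bound $N^2/16$. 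That sum equals
\[
 N^{-2}\sum_{z,z'}a_{zz'}^2,
 \qquad\text{where}\qquad
 a_{zz'}:=\sum_x w_{xz}(1-w_{xz'}).
\]
The identity $a_{zz'}+a_{z'z}=\sum_x\bigl(w_{xz}+w_{xz'}-2w_{xz}w_{xz'}\bigr)$ should then be combined with the marginal constraints. These are $a_{zz'}\le\min\{d_z,N-d_{z'}\}$ with $d_z:=\sum_xw_{xz}$, and, from good edges together with $K\le\min$ of the color degrees as in \eqref{eq:Ki-degree-bound}, $d_z/N\ge q-o(1)$ and $1-d_z/N\ge q-o(1)$ for almost every $z$.

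If neither global functional works directly, I would fall back on a two-step argument. First, use $K_W(x,y)\le\min\{d_x,d_y\}/N$ on good edges, which forces almost every vertex to have red and blue degree fractions in $[q,1-q]$. Second, on the good edges use $K_W+K_{1-W}\ge 2q$ and compare this with the per-center identity
\[
 \sum_{x,y}\bigl(w_{xz}w_{yz}+(1-w_{xz})(1-w_{yz})\bigr)\approx d_z^2+(N-d_z)^2,
\]
refined by subtracting the diagonal terms $\sum_x\bigl(w_{xz}^2+(1-w_{xz})^2\bigr)$, which penalize fractional weights. The aim is to obtain $\min\le\tfrac14$ on average by pairing each center $z$ with the contribution $r_z(1-r_z)\le1/4$, where $r_z=d_z/N$.

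I expect the main obstacle to be this upper bound. Colorings with $r_z$ bimodal, for example each center's edges mostly red to one half of the vertices and mostly blue to the other, make either $K_W$ or $K_{1-W}$ large but not both. The functional has to capture that tradeoff per edge rather than per color, and finding the correct one, on which $1/4$ is genuinely extremal, is the crux of the proof.
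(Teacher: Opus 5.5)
\textbf{There is a genuine gap.} Your reduction is correct: if fewer than $\beta N^2/2$ edges are bad, an averaged quantity is at least about $q(1-\beta)$, and $q(1-\beta)>1/4$ is equivalent to $\beta<1-\frac1{4q}$. But you never supply the matching upper bound, which you yourself call the crux. Moreover, both functionals you name fail on a two-clique coloring.

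Split $[N]$ into sets $A,B$ of sizes $\alpha N$ and $(1-\alpha)N$. Put $w_{xy}=1$ inside $A$ and inside $B$, and $w_{xy}=0$ between them. For $x,y\in A$ this gives $K_W\approx\alpha$ and $K_{1-W}\approx1-\alpha$; symmetrically for $x,y\in B$; and both weights vanish on cross pairs.
\begin{itemize}
\item With $\alpha=1/2$ you get $N^{-2}\sum_{x\ne y}K_WK_{1-W}\approx1/8>1/16$, so your target for the product functional is false.
\item Similarly $N^{-2}\Sigma\approx(\alpha^2+(1-\alpha)^2)\sqrt{\alpha(1-\alpha)}$, which equals $0.272>1/4$ at $\alpha=0.8$. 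So $\Sigma\le N^2/4+o(N^2)$ is also false, and any argument through $\Sigma$ breaks down for $q\le0.272$.
\item As sketched, the fallback through $K_W+K_{1-W}\ge2q$ discards exactly the information about the smaller of the two weights. The sum of $K_W+K_{1-W}$ over ordered pairs is roughly $N\sum_z\bigl(r_z^2+(1-r_z)^2\bigr)\ge N^2/2$ for every coloring. With $r_z\in[q,1-q]$, the best upper bound is about $(q^2+(1-q)^2)N^2$, which exceeds $2q(1-\beta)N^2$ for $q$ near $1/4$. So no contradiction can come out of it.
\end{itemize}

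\textbf{The missing idea} is to bound $\min\{K_W,K_{1-W}\}$ directly. Do this through the convex combination whose coefficients are the colour weights of the edge $vy$ itself. Fix $v$ and write $a_x:=w_{vx}$. Then
\[
 N\sum_{y\ne v}\bigl[(1-a_y)K_W(v,y)+a_yK_{1-W}(v,y)\bigr]
 =\sum_{\substack{y,z\ne v\\ y\ne z}}\bigl[(1-a_y)a_zw_{yz}+a_y(1-a_z)(1-w_{yz})\bigr].
\]
Exchange $y$ and $z$ in the second term. The third edge $yz$ of the triangle $vyz$ then cancels, because its coefficient becomes $w_{yz}+(1-w_{yz})=1$. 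The right-hand side is therefore $\sum_{y\ne z}(1-a_y)a_z\le(N-1)^2/4$. Since a convex combination dominates the minimum, this gives $\sum_{y\ne v}\min\{K_W(v,y),K_{1-W}(v,y)\}\le N/4$. Note that the bound $r(1-r)\le1/4$ attaches to the common leaf $v$, not to the center $z$ as in your sketch.

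Consequently every vertex has at most $(N-1)^2/(4qN)$ good neighbors. Double counting then gives at least $\binom N2-(N-1)^2/(8q)$ bad edges, which is the paper's argument.
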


\begin{proof}
Let
\[
 \mathcal B:=
 \{xy\in E(K_N):
   \min\{K_W(x,y),K_{1-W}(x,y)\}<q\}
\]
be the set of bad edges.  For each $v\in[N]$, let
$\mathcal G_v:=\{y\in[N]\setminus\{v\}:vy\notin\mathcal B\}$, so
$\mathcal G_v$ consists of the vertices joined to $v$ by good edges.
For every $y\in\mathcal G_v$, the total red and blue weights of the
cherries with leaves $v,y$ are both at least $qN$; that is,
\[
 \sum_{z\ne v,y}w_{vz}w_{yz}\ge qN
 \quad\text{and}\quad
 \sum_{z\ne v,y}(1-w_{vz})(1-w_{yz})\ge qN.
\]

Fix $v\in[N]$ and write $a_x:=w_{vx}$ for the red weight of the edge
$vx$.  For each $y\in\mathcal G_v$, multiply the red-cherry inequality
by the blue weight $1-a_y$ of $vy$, and multiply the blue-cherry
inequality by the red weight $a_y$ of $vy$.  Adding these inequalities,
summing over $y\in\mathcal G_v$, and interchanging $y$ and $z$ in the
second resulting sum gives
\[
 \begin{aligned}
 qN|\mathcal G_v|
 &\le
 \sum_{\substack{y\in\mathcal G_v\\ z\ne v,y}}
 a_z(1-a_y)w_{yz}
 +
 \sum_{\substack{z\in\mathcal G_v\\ y\ne v,z}}
 a_z(1-a_y)(1-w_{yz}).
 \end{aligned}
\]
For fixed distinct $y,z\ne v$, the factor $a_z(1-a_y)$ occurs with
coefficient $w_{yz}$ in the first sum only when $y\in\mathcal G_v$, and
with coefficient $1-w_{yz}$ in the second sum only when
$z\in\mathcal G_v$.  Its combined coefficient is therefore at most
$w_{yz}+(1-w_{yz})=1$.  Consequently,
\[
 \begin{aligned}
 qN|\mathcal G_v|
 &\le
 \sum_{\substack{y,z\ne v\\y\ne z}}a_z(1-a_y) =
 \left(\sum_{x\ne v}a_x\right)
 \left(N-1-\sum_{x\ne v}a_x\right)
 -\sum_{x\ne v}a_x(1-a_x)\\
 &\le
 \left(\sum_{x\ne v}a_x\right)
 \left(N-1-\sum_{x\ne v}a_x\right)
 \le \frac{(N-1)^2}{4}.
 \end{aligned}
\]
Thus
$|\mathcal G_v|\le (N-1)^2/(4qN)$ for every $v\in[N]$.

Every good edge $xy$ contributes once to $|\mathcal G_x|$ and once to
$|\mathcal G_y|$.  Hence
\[
 \begin{aligned}
 2\left(\binom N2-|\mathcal B|\right)
 &=\sum_{v=1}^N|\mathcal G_v|
 \le\frac{(N-1)^2}{4q},
 \end{aligned}
\]
and therefore
\[
 |\mathcal B|
 \ge
 \binom N2-\frac{(N-1)^2}{8q}
 =
 \left(1-\frac1{4q}+o(1)\right)\frac{N^2}{2}.
\]
Since $\beta<1-\frac1{4q}$, the right-hand side is at least
$\beta N^2/2$ for all sufficiently large $N$.
\end{proof}

\section{Proof of the main lemma in the vertex-degree case}
\label{sec:pf-of-v-dominant}

We now prove \cref{lem:v-dominant} using the Kruskal--Katona theorem, the
multicolor regularity lemma, and the weighted clique theorem above.

\begin{proof}
Fix $d\in(1/3,1]$, $\alpha>f(d)$, and $\eps>0$, and set
$\rho:=\rho(d)$. We may assume that $\eps<\alpha+\rho-1$.
Choose $d_0\in(1/3,d)$ sufficiently close to $d$ that, with $\rho_0:=\rho(d_0)$,
\begin{equation}\label{eq:vdeg-rho-margin}
\rho_0>\rho-\frac{\eps}{20},
\qquad
\rho_0^3>\rho^3-\frac{\eps}{20}.
\end{equation}

Apply \cref{cor:robust-weighted-clique} with threshold $d_0$ and error
$\eps/10^4$. Let $\xi>0$ and $m_0\in\N$ be the resulting constants.
Shrinking $\xi$ and increasing $m_0$ if necessary, assume that
\[
 \xi<\frac{\eps}{10^4},
 \qquad
 \frac1{m_0}<\frac{\eps}{10^4}.
\]
Choose
\[
 0<\eta<
 \min\left\{
  \frac{\xi}{100},
  \frac{\eps}{10^4},
  \frac{d-d_0}{100}
 \right\}.
\]

Apply \cref{lem:tail-triangles} with error $\eta\xi/100$. We obtain an
integer $L$ and an order threshold such that, whenever the component
shadows are listed in nonincreasing order of size,
\begin{equation}\label{eq:vdeg-tail-choice}
 \sum_{i>L}\#K_3(G_i)<\frac{\eta\xi}{200}n^3.
\end{equation}
Apply \cref{lem:multicolor-regularity} with $L$ colors, accuracy
$\eta/L$, and lower cluster bound $m_0$. Let $M_0$ be the resulting
upper cluster bound. Choose $\mu>0$ so that
\begin{equation}\label{eq:vdeg-mu-choice}
 \mu(2M_0)^3+7\eta<d-d_0,
 \qquad
 \mu<\frac{\eps}{4},
\end{equation}
and then choose $n_0$ larger than all preceding order thresholds and
sufficiently large for the estimates below.

Let $n\ge n_0$, and let $H$ be an $(n,d,\mu)$-dense $3$-graph satisfying
\[
 \delta_1(H)\ge\alpha\binom{n-1}{2}.
\]
List the tight components of $H$ as $C_1,C_2,\ldots$ in nonincreasing
order of their shadow sizes, and put
\[
 G_i:=\partial C_i.
\]
If necessary, append empty graphs so that $G_1,\ldots,G_L$ are defined.

Apply the multicolor regularity lemma to $G_1,\ldots,G_L$. We obtain a
partition
\[
 V(H)=V_0\cup V_1\cup\cdots\cup V_m,
 \qquad
 |V_1|=\cdots=|V_m|=:q,
\]
where
\[
 m_0\le m\le M_0,
 \qquad
 |V_0|\le\frac{\eta}{L}n.
\]
For $a\ne b$ and $i\in[L]$, put
\[
 w_i(ab):=d_{G_i}(V_a,V_b).
\]
Since the shadows of distinct tight components are edge-disjoint,
\begin{equation}\label{eq:vdeg-subprobability}
 \sum_{i=1}^L w_i(ab)\le1
 \qquad(a\ne b).
\end{equation}
Thus the vectors $(w_i(ab))_{i\in[L]}$ form a weighted $L$-coloring of
the reduced complete graph on $[m]$.

Call a cluster triple $abc\in\binom{[m]}3$ \emph{good} if its three
pairs are $\eta/L$-regular simultaneously in every $G_i$, $i\in[L]$,
and
\[
 \sum_{i>L}
 \#\bigl\{xyz\in K_3(G_i):
 x\in V_a,\ y\in V_b,\ z\in V_c\bigr\}
 \le\eta q^3.
\]
At most $(\eta/L)m^3$ cluster triples contain an irregular pair.
Moreover, if $b$ cluster triples violate the second condition, then
\[
 b\eta q^3
 \le\sum_{i>L}\#K_3(G_i)
 <\frac{\eta\xi}{200}n^3.
\]
Since $mq=n-|V_0|$ and $|V_0|\le\eta n/L$, we have
$n/q\le2m$, and hence
\[
 b<\frac{\xi}{200}\left(\frac nq\right)^3
 \le\frac{\xi}{25}m^3.
\]
By the choice of $\eta$, all but at most $\xi m^3$ cluster triples are
therefore good.

Fix a good triple $abc$. Uniform density gives
\[
 e_H(V_a,V_b,V_c)\ge dq^3-\mu n^3.
\]
Every edge counted here gives a monochromatic triangle in one component
shadow. The shadows with index greater than $L$ contribute at most
$\eta q^3$ such triangles. For each $i\in[L]$,
\cref{lem:regular-triangle} gives an error of at most
$6(\eta/L)q^3$. Summing over $i\in[L]$, we obtain
\[
 e_H(V_a,V_b,V_c)
 \le\bigl(M(a,b,c)+7\eta\bigr)q^3,
\]
where
\[
 M(a,b,c)
 =\sum_{i=1}^Lw_i(ab)w_i(ac)w_i(bc).
\]
Since $n/q\le2M_0$, \eqref{eq:vdeg-mu-choice} implies
\begin{equation}\label{eq:vdeg-reduced-M}
 M(a,b,c)
 \ge d-\mu\left(\frac nq\right)^3-7\eta
 >d_0.
\end{equation}

Thus all but at most $\xi m^3$ reduced triangles have monochromatic
weight greater than $d_0$. By
\cref{cor:robust-weighted-clique}, there is a color $c\in[L]$ such that
\begin{equation}\label{eq:vdeg-heavy-pairs}
 w_c(ab)>d_0
\end{equation}
for all but at most $(\eps/10^4)\binom m2$ cluster pairs $ab$.

Consider a good triple $abc$ whose three pairs satisfy
\eqref{eq:vdeg-heavy-pairs}, and write
\[
 x:=w_c(ab),\qquad y:=w_c(ac),\qquad z:=w_c(bc).
\]
By \eqref{eq:vdeg-subprobability}, the contribution of the remaining
tracked colors is at most $(1-x)(1-y)(1-z)$. Hence
\[
 d_0
 <M(a,b,c)
 \le xyz+(1-x)(1-y)(1-z).
\]
Put $S:=x+y+z$. Since
\[
 xyz+(1-x)(1-y)(1-z)
 =1-S+xy+xz+yz
 \le1-S+\frac{S^2}{3},
\]
we have
\[
 S^2-3S+3-3d_0>0.
\]
The roots of the corresponding quadratic are
$3(1-\rho_0)$ and $3\rho_0$. Since
$x,y,z>d_0>1-\rho_0$, we have $S>3(1-\rho_0)$, and therefore
\begin{equation}\label{eq:vdeg-triple-sum}
 w_c(ab)+w_c(ac)+w_c(bc)>3\rho_0.
\end{equation}

Equation~\eqref{eq:vdeg-triple-sum} can fail only if the cluster triple is
not good or contains a pair on which \eqref{eq:vdeg-heavy-pairs} fails.
The number of such triples is at most
\[
 \xi m^3+\frac{\eps}{10^4}\binom m2(m-2).
\]
Since every cluster pair lies in exactly $m-2$ cluster triples, summing
\eqref{eq:vdeg-triple-sum} gives
\[
 (m-2)\sum_{1\le a<b\le m}w_c(ab)
 \ge
 3\rho_0\left[
  \binom m3-\xi m^3
  -\frac{\eps}{10^4}\binom m2(m-2)
 \right].
\]
By the choices of $\xi$ and $m_0$,
\begin{equation}\label{eq:vdeg-weight-sum}
 \sum_{1\le a<b\le m}w_c(ab)
 \ge
 \left(\rho_0-\frac{\eps}{20}\right)\binom m2.
\end{equation}

The selected color is nonempty, so it corresponds to an actual tight
component. Put
\[
 C:=C_c,
 \qquad
 G:=\partial C.
\]
The cross-cluster edges counted in \eqref{eq:vdeg-weight-sum} give
\[
 \begin{split}
 e(G)
 &\ge q^2\sum_{a<b}w_c(ab)\\
 &\ge
 \left(\rho_0-\frac{\eps}{20}\right)
 q^2\binom m2.
 \end{split}
\]
Since
\[
 q^2\binom m2
 =\frac{(n-|V_0|)^2}{2}\left(1-\frac1m\right),
\]
the choices of $\eta,m_0,n_0$ and
\eqref{eq:vdeg-rho-margin} imply
\[
 e(G)\ge
 \left(\rho-\frac{\eps}{4}\right)\binom n2.
\]
In particular, part~\ref{item:p1} holds, and
\begin{equation}\label{eq:vdeg-complement-shadow}
 e(\overline G)
 \le
 \left(1-\rho+\frac{\eps}{4}\right)\binom n2.
\end{equation}

We next prove part~\ref{item:p2}. Every edge of $H\setminus C$ has all
three of its pairs in $\overline G$; otherwise it would be tightly
adjacent to an edge of $C$. For every reduced triple satisfying
\eqref{eq:vdeg-triple-sum}, the arithmetic--geometric mean inequality
gives
\begin{equation}\label{eq:vdeg-complement-product}
 \bigl(1-w_c(ab)\bigr)
 \bigl(1-w_c(ac)\bigr)
 \bigl(1-w_c(bc)\bigr)
 \le(1-\rho_0)^3.
\end{equation}

The complement of an $\eta/L$-regular pair is again
$\eta/L$-regular. Apply \cref{lem:regular-triangle} to $\overline G$
with the indicator functions of $X\cap V_a$, $Y\cap V_b$, and
$Z\cap V_c$, and sum over the ordered triples of distinct clusters for
which \eqref{eq:vdeg-triple-sum} holds. Tuples meeting $V_0$, tuples
using a cluster twice, exceptional reduced triples, and the regular
triangle-counting errors contribute at most
\[
 100\left(
 \eta+\xi+\frac{\eps}{10^4}+\frac1{m_0}
 \right)n^3
 \le\frac{\eps}{4}n^3.
\]
Consequently, uniformly for all $X,Y,Z\subseteq V(H)$,
\begin{equation}\label{eq:vdeg-outside-density}
 e_{H\setminus C}(X,Y,Z)
 \le
 (1-\rho_0)^3|X||Y||Z|+\frac{\eps}{4}n^3.
\end{equation}

Since $H$ is $(n,d,\mu)$-dense and
\[
 d_0=\rho_0^3+(1-\rho_0)^3,
\]
we obtain from \eqref{eq:vdeg-rho-margin},
\eqref{eq:vdeg-mu-choice}, and
\eqref{eq:vdeg-outside-density} that
\[
 \begin{split}
 e_C(X,Y,Z)
 &\ge
 \bigl(d-(1-\rho_0)^3\bigr)|X||Y||Z|
 -\left(\mu+\frac{\eps}{4}\right)n^3\\
 &=
 \bigl(\rho_0^3+d-d_0\bigr)|X||Y||Z|
 -\left(\mu+\frac{\eps}{4}\right)n^3\\
 &\ge
 (\rho^3-\eps)|X||Y||Z|-\eps n^3.
 \end{split}
\]
This proves part~\ref{item:p2}.

Finally, fix $v\in V(H)$. If
$vxy\in E(H)\setminus E(C)$, then $xy\notin E(G)$; otherwise
$vxy$ would be tightly adjacent to an edge of $C$. The map
$vxy\mapsto xy$ is injective, so
\[
 \deg_{H\setminus C}(v)\le e(\overline G).
\]
Using \eqref{eq:vdeg-complement-shadow}, we obtain, for sufficiently
large $n$,
\[
 \begin{split}
 \deg_C(v)
 &\ge
 \alpha\binom{n-1}{2}
 -\left(1-\rho+\frac{\eps}{4}\right)\binom n2\\
 &\ge
 (\alpha+\rho-1-\eps)\binom{n-1}{2}.
 \end{split}
\]
This proves part~\ref{item:p3}. Thus, the last quantity is positive for every
$v$, so every vertex lies in an edge of $C$. 
Hence $C$ is spanning.
\end{proof}

\section{Proofs of the main lemmas in the codegree case}
\label{sec:pf-of-co-dominant}

\subsection{The codegree dominant component}
\begin{proof}
We may assume that $d\le1$ and $\alpha<1$, since the remaining cases are
vacuous. Choose $\kappa<t<\min\{d,\alpha\}$ and put
$d_0:=(d+t)/2$. Since
$(1-t)^3<(1-\kappa)^3=\kappa<d$, we may fix $\tau>0$ such that
\begin{equation}\label{eq:codeg-tau-choice}
 \kappa+2\tau<t,\qquad
 (1-t)^3+2\tau<d,\qquad
 \tau<\frac{\alpha^2}{2}.
\end{equation}
This choice depends only on $d$ and $\alpha$.

Choose $0<\beta<1-1/(4t)$, and apply
\cref{lemma:weighted cherry dense} with $q=t$ and $\beta$.
Also apply \cref{lem:C7} with density $d$, which is possible because
$d>\kappa>4/27$; let $\mu_7>0$ and $n_7\in\N$ be the resulting
constants.

Fix $\eps>0$. Put $\gamma:=\beta/10$, and choose $\nu>0$ so that
$(2\nu)^{3/2}<\mu_7/2$. Choose $\xi>0$ sufficiently small in terms of
$\eps,\tau,\alpha-t,\beta,\nu\gamma$. Apply
\cref{thm:codeg-weighted-structure} with
$\omega_0=\alpha^2/2$, the thresholds $t,d_0$, and output error $\xi$.
Let $\eta_0>0$ and $r_0\in\N$ be the resulting constants. By decreasing
$\eta_0$ if necessary, we may assume that $\eta_0<t$.

Set $L:=\lceil\alpha^{-2}\rceil$. Choose the regularity accuracy
$\eta>0$ sufficiently small and the lower cluster bound $m_0$
sufficiently large so that $m_0$ exceeds both $r_0$ and the order
threshold in \cref{lemma:weighted cherry dense}, the error term in
\eqref{eq:finite-support-transfer}, with
$q=\alpha$, $\gamma=\eta_0$, and $\theta=2\sqrt\eta$, is smaller than
$\alpha-t$, and all losses of order $\sqrt\eta+1/m_0$ below are smaller
than the prescribed margins. Apply
\cref{lem:multicolor-regularity} with $L$ colors, accuracy $\eta$, and
lower bound $m_0$, and let $M$ be the resulting upper bound. Finally,
choose $\mu>0$ sufficiently small and then $n_0$ sufficiently large so
that
\[
 \mu(2M)^3+6L\eta<d-d_0,
 \qquad
 \mu(3/\gamma)^3<\frac{\mu_7}{2},
\]
and all errors below are at most the required fractions of $\eps$.

Let $H$ be an $(n,d,\mu)$-dense $3$-graph with $n\ge n_0$ and
$\delta_2(H)\ge\alpha n$. Since every pair has positive codegree, the
shadows of the tight components of $H$ partition $E(K_n)$. List the
tight components as $C_1,\ldots,C_h$ and put $G_i:=\partial C_i$. If
$xy\in E(G_i)$, then every edge of $H$ containing $xy$ belongs to
$C_i$. Thus $x$ and $y$ have at least $\alpha n$ common neighbors in
$G_i$. Consequently,
$3\#K_3(G_i)\ge\alpha n e(G_i)$, and
\cref{lem:kk-triangle} gives
\begin{equation}\label{eq:component-shadow-mass}
 e(G_i)\ge\frac{\alpha^2n^2}{2}.
\end{equation}
It follows that $h<\alpha^{-2}\le L$; append empty colors up to $L$.

Apply the regularity partition to $G_1,\ldots,G_L$, obtaining
\[
 V(H)=V_0\cup V_1\cup\cdots\cup V_r,
 \qquad |V_1|=\cdots=|V_r|=:m,
\]
where $m_0\le r\le M$ and $|V_0|\le\eta n$. Call an index
$a\in[r]$ bad if it is incident with more than $\sqrt\eta r$ cluster
pairs that are irregular in some color. There are at most
$2\sqrt\eta r$ bad indices. Let $\Gamma$ be the graph on $[r]$ whose
edges are the regular cluster pairs with two nonbad endpoints, and set
$w_i(ab):=d_{G_i}(V_a,V_b)$ for $a\ne b$, with $w_i(aa):=0$. Then
$e(\Gamma)>(1-\eta_0)\binom r2$, and
$\sum_iw_i(ab)=1$ for every $a\ne b$.

We verify the hypotheses of \cref{thm:codeg-weighted-structure}. If
$abc\in K_3(\Gamma)$, uniform density and
\cref{lem:regular-triangle} give
\[
 M(a,b,c)\ge d-\mu(n/m)^3-6L\eta\ge d_0.
\]
If $ab\in E(\Gamma)$ and $w_i(ab)\ge\eta_0$, then
\cref{lem:finite-support-transfer}, applied to $G_i$, gives
$K_i(a,b)\ge t$. Finally, \eqref{eq:component-shadow-mass} and the
negligible contribution of pairs meeting $V_0$ or lying inside one
cluster give
$\overline w_i\ge\alpha^2-O(\eta+1/m_0)\ge\alpha^2/2$
for every actual component color $i$. Thus all three hypotheses hold.

Apply \cref{thm:codeg-weighted-structure}. We obtain a present color
$c$, a set $U\subseteq[r]$ with $|U|\ge(1-\xi)r$, and a graph $F$ on
$U$ which contains every nonedge of $\Gamma[U]$ and satisfies
$\Delta(F)\le\xi r$. Call a pair in
$\binom U2\setminus E(F)$ clean. Every clean pair $ab$ satisfies
$w_c(ab)>t$. Moreover, there is at most one present color other than
$c$; if such a color $b$ exists and
\[
 S:=\{a\in[r]:d_b(a)\ge(1-\kappa)^2\},
\]
then, on every clean pair $ab$, we have
$w_b(ab)>t$ if and only if $a,b\in S$, while $w_b(ab)<\xi$ whenever
at least one endpoint lies outside $S$. Since every actual component
color has positive reduced mass, $H$ has at most two tight components.

Let $C:=C_c$. For a clean pair, \cref{lem:regular-cut} transfers the
inequality $w_c(ab)>t$ to arbitrary subsets of the corresponding
clusters. Summing over the reduced pairs and absorbing pairs meeting
$V_0$, pairs involving a cluster outside $U$, pairs in $F$, and
intracluster pairs into the error term gives, for all
$X,Y\subseteq V(H)$,
\[
 e_{\partial C}(X,Y)
 \ge t|X||Y|-\eps n^2
 \ge(\kappa+\tau)|X||Y|-\eps n^2.
\]
This proves part~\textup{(i)}.

Every edge of $H\setminus C$ has all three pairs outside $\partial C$.
On a reduced triple whose three pairs are clean, the complementary pair
densities are smaller than $1-t$. Hence
\cref{lem:regular-triangle}, summed over all reduced triples, gives
\[
 e_{H\setminus C}(X,Y,Z)
 \le(1-t)^3|X||Y||Z|+\frac{\eps}{2}n^3
\]
for all $X,Y,Z\subseteq V(H)$. Together with the density of $H$ and
\eqref{eq:codeg-tau-choice}, this yields
\[
 e_C(X,Y,Z)
 \ge\bigl(d-(1-t)^3\bigr)|X||Y||Z|-\eps n^3
 \ge\tau|X||Y||Z|-\eps n^3,
\]
proving part~\textup{(ii)}.

We next show that $C$ is spanning. Suppose not, and choose
$v\notin V(C)$. The unique other component, say $D=C_b$, contains
every edge of $H$ meeting $v$. Indeed, for each $u\ne v$, the pair
$uv$ has positive codegree and cannot lie in $\partial C$, so
$uv\in\partial D$. Every vertex in $N_H(uv)$ is then a common neighbor
of $u$ and $v$ in $\partial D$, and it follows that
$\delta(\partial D)\ge\alpha n$. Summing this degree bound over any
cluster $V_a$ gives
\begin{equation}\label{eq:second-reduced-degree}
 d_b(a)\ge\alpha-O(\sqrt\eta+1/m_0)>\frac{\alpha}{2}
 \qquad\text{for every }a\in[r].
\end{equation}
If $a\in U\setminus S$, then the conclusion of
\cref{thm:codeg-weighted-structure}, together with
$|[r]\setminus U|\le\xi r$ and $\Delta(F)\le\xi r$, gives
$d_b(a)\le3\xi$, contradicting \eqref{eq:second-reduced-degree}.
Thus $U\subseteq S$.

It follows that every clean pair is heavy in both colors $c$ and $b$.
Since clean pairs belong to $E(\Gamma)$ and $\eta_0<t$, the reduced
codegree condition gives $K_c(a,b),K_b(a,b)\ge t$ on every clean pair.
As $c$ and $b$ are the only present colors,
$w_b(ab)=1-w_c(ab)$ for all $a\ne b$. All but
$O(\xi r^2)<\beta r^2/2$ pairs are clean, contradicting
\cref{lemma:weighted cherry dense}. Hence $C$ is spanning.

Fix $v\in V(H)$ and choose $u$ with $uv\in\partial C$. Put
$A:=N_H(uv)=N_C(uv)$, so $|A|\ge\alpha n$. For every $z\in A$, the
pair $vz$ lies in $\partial C$, and hence
$\deg_C(vz)=\deg_H(vz)\ge\alpha n$. Each edge of $C$ containing $v$ is
counted at most twice in the following sum, so
\[
 2\deg_C(v)
 \ge\sum_{z\in A}\deg_C(vz)
 \ge\alpha^2n^2.
\]
By \eqref{eq:codeg-tau-choice}, this implies
$\delta_1(C)\ge\tau\binom{n-1}{2}$ for sufficiently large $n$, proving
part~\textup{(iii)}.

It remains to find $C_7^{(3)}$. If $C=H$, then the claim follows
directly from \cref{lem:C7}. Suppose that the second component
$D=C_b$ exists, and retain the set $S$ above. We claim that
\begin{equation}\label{eq:support-not-spanning}
 |S\cap U|\le(1-\gamma)r.
\end{equation}
Otherwise, the number of pairs that do not lie cleanly inside
$S\cap U$ is at most
$\gamma r^2+e(F)<\beta r^2/2$, while on every remaining pair both
$K_c$ and $K_b$ are at least $t$. This again contradicts
\cref{lemma:weighted cherry dense}.

Let $I:=U\setminus S$ and $T:=\bigcup_{a\in I}V_a$. By
\eqref{eq:support-not-spanning}, $|I|\ge\gamma r/2$ and
$|T|\ge\gamma n/3$. For every clean pair in $\binom I2$ we have
$w_b(ab)<\xi$. Therefore
\[
 e(\partial D[T])
 \le \xi\binom{|I|}{2}m^2
   +\frac{\xi r|I|m^2}{2}
   +\frac{|I|m^2}{2}
 \le\nu|T|^2,
\]
where the second term accounts for pairs in $F$ and the third for pairs
inside one cluster. By \cref{lem:kk-triangle},
\[
 e(D[T])\le\frac{(2\nu)^{3/2}}6|T|^3.
\]
Since $C$ and $D$ are the only tight components, for all
$X,Y,Z\subseteq T$ we obtain
\[
 e_{C[T]}(X,Y,Z)
 \ge d|X||Y||Z|
 -\left(\mu(3/\gamma)^3+(2\nu)^{3/2}\right)|T|^3
 \ge d|X||Y||Z|-\mu_7|T|^3.
\]
Thus $C[T]$ is $(|T|,d,\mu_7)$-dense, and $|T|\ge n_7$ by the choice
of $n_0$. Applying \cref{lem:C7} gives a copy of $C_7^{(3)}$ in
$C[T]$, proving part~\textup{(iv)}.
\end{proof}

\subsection{Consistency of overlapping components}

\begin{proof}[Proof of Lemma~\ref{lem:shadow-consistency}]
Suppose the assertion is false. We may assume that $t\le1/2$, since otherwise edge-disjointness and the two cut lower bounds with $X=Y=V(R)$ already give a contradiction for sufficiently small error. Then, for every $j$, there are edge-disjoint graphs $R_j,B_j$ on a common vertex set of order $s_j\to\infty$ such that
\[
 e_{R_j}(X,Y),e_{B_j}(X,Y)
 \ge t|X||Y|-\eps_js_j^2
\]
for all $X,Y$, where $\eps_j\to0$, and every edge of either graph has at least $qs_j$ common neighbors in that graph.

Choose a two-color regularity accuracy $\eta_j\to0$ and a lower cluster bound tending to infinity, and let $M_j$ be the corresponding upper bound. Since the negation supplies examples for arbitrarily small cut error, choose the sequence so that
\begin{equation}\label{eq:consistency-diagonal}
 \eps_jM_j^2\to0.
\end{equation}
Apply simultaneous regularity to $R_j$ and $B_j$, remove the reduced vertices of large irregular degree as in \cref{sec:preliminaries}, and write $r(xy)$ and $b(xy)$ for the reduced red and blue densities on the remaining $r_j\to\infty$ clusters. Applying the cut lower bounds with $X$ and $Y$ equal to two clusters, and using \eqref{eq:consistency-diagonal}, gives
\begin{equation}\label{eq:consistency-densities}
 r(xy),b(xy)\ge t-o(1)
\end{equation}
for every retained cluster pair. Edge-disjointness gives $r(xy)+b(xy)\le1$. Set
$r(xx):=b(xx):=0$ and put
\[
 K_R(x,y):=\frac1{r_j}\sum_z r(xz)r(yz),
 \qquad
 K_B(x,y):=\frac1{r_j}\sum_z b(xz)b(yz).
\]

Apply \cref{lem:finite-support-transfer} with the fixed threshold $\gamma=t/2$. For every regular cluster pair, equation \eqref{eq:consistency-densities} and the common-neighbor hypotheses give
\[
 K_R(x,y)\ge q-o(1),
 \qquad
 K_B(x,y)\ge q-o(1).
\]
Thus these inequalities hold for all but $o(r_j^2)$ pairs. Choose $q'$ with $1/4<q'<q$. For all sufficiently large $j$, both lower bounds exceed $q'$. Since $b(xy)\le1-r(xy)$, we also have $K_{1-R}(x,y)\ge K_B(x,y)$. The red weight system and its complement therefore have common-neighbor weights at least $q'$ on all but $o(r_j^2)$ pairs, contradicting.
\end{proof}

\section{Concluding remarks}\label{sec:remark}

In this paper, we determine the sharp minimum-vertex-degree
boundary for densities above $1/3$ and the sharp diagonal threshold in the
minimum-codegree setting. We conclude by formulating the remaining
questions in terms of the corresponding threshold functions.

For $d\in(0,1]$, let $h_1(d)$ be the infimum of all
$\alpha\in[0,1]$ such that, for every $\eps>0$, there exist $\mu>0$ and
$n_0\in\N$ for which every $(n,d,\mu)$-dense $3$-graph $H$ with
$n\ge n_0$ and
$\delta_1(H)\ge(\alpha+\eps)\binom{n-1}{2}$ contains a tight Hamilton
cycle. Define $h_2(d)$ analogously, replacing the degree condition by
$\delta_2(H)\ge(\alpha+\eps)n$.

For $d\in[1/4,1]$, recall that
$f(d)=(1-\sqrt{(4d-1)/3})/2$. By
\cref{thm:main-vexdeg-curve,prop:vertex-obstruction}, we have
$h_1(d)=f(d)$ for every $d>1/3$. 
Let $\kappa\in(1/4,1/3)$ be the unique solution of
$\kappa=(1-\kappa)^3$. Together with monotonicity of the density
condition, \cref{thm:codegree-counterexample,thm:main-codeg} imply
\[
h_2(d)\ge\kappa\quad(0<d\le\kappa),
\quad
h_2(d)\le\kappa\quad(d>\kappa).
\]
Thus $\kappa$ is the sharp diagonal codegree threshold. In particular, our result answers \cite[Problem~6.2]{Shu2026} affirmatively.

In independent recent work, Shu~\cite{Shu2026} determined the sharp
codegree curve above density $1/3$, showing that
$h_2(d)=f(d)^2$ for every $d>1/3$. Consequently,
$h_2(d)=h_1(d)^2$ throughout this range. 

For the remaining range, the currently known bounds are $\frac13\le h_1(d)\le\frac59$ when $0<d\le1/3$; $\kappa\le h_2(d)\le\frac12$ when $0<d\le\kappa$ and $f(d)^2\le h_2(d)\le\kappa$ when $\kappa<d\le1/3$.
Here the general upper bounds follow from
\cite{ReiherEtAl2019,RodlRucinskiSzemeredi2008}. The lower bounds
$h_1(d)\ge1/3$ and $h_2(d)\ge\kappa$ follow from the constructions in
\cref{prop:vertex-obstruction,prop:codegree-obstruction}, respectively,
together with monotonicity, while the last lower bound comes from the
construction in~\cite{Shu2026}.

The density $1/3$ also marks a limitation of the currently available
methods. The weighted monochromatic-clique theorem used in the
vertex-degree argument applies from this density onward, while the
arbitrary-pair connecting mechanism underlying the absorption approach
may fail below $1/3$~\cite{Shu2026}. It remains to understand whether the
unresolved ranges are governed by new extremal constructions or by
further changes in the structure of the relevant tight components.

\begin{problem}\label{prob:remaining-threshold-curves}
Determine $h_1(d)$ and $h_2(d)$ for $0<d\le1/3$. 
\end{problem}

\section*{Acknowledgements}
The authors acknowledge the employment of AI tools in the exploratory phase of this project. AI offered inspirations for lower bound constructions, and the authors developed the proof framework for upper bounds. All mathematical reasoning and proofs in the final manuscript were composed and verified by the authors.

\begingroup
\sloppy
\bibliographystyle{abbrv}
\bibliography{ref}
\endgroup

\end{document}